\journalname{}
\def\pn{\par\smallskip\noindent}
\newtheorem{thm}{Theorem}[section]
\newtheorem{lem}{Lemma}[section]
\newtheorem{rem}{Remark}[section]
\newtheorem{fact}{Fact}[section]
\newtheorem{algo}{Algorithm}[section]
\newtheorem{prob}{Problem}[section]
\newtheorem{asmp}{Assumption}[section]
\def\be{\begin{eqnarray}}
\def\ee{\end{eqnarray}}
\def\ben{\begin{eqnarray*}}
\def\een{\end{eqnarray*}}
\def\ba{\begin{array}}
\def\ea{\end{array}}
\def\bi{\begin{itemize}}
\def\ei{\end{itemize}}
\def\proof {\pn {Proof.} }
\def\endproof{\hfill $\Box$ \vskip .5cm}
\def\cL{{\mathcal L}}
\def\cN{{\mathcal N}}
\def\cO{{\mathcal O}}
\def\cS{{\mathcal S}}
\def\bR{{\mathbb R}}
\def\bN{{\mathbb N}}
\def\bC{{\mathbb C}}
\def\prox{{\rm Prox}}
\def\[{\begin{equation}}
\def\]{\end{equation}}
\newcommand{\D}{\Delta}
\newcommand{\R}{\mathbb R}
\newcommand{\lr}[1]{\left\langle #1\right\rangle}
\begin{document}

\title{A golden ratio primal-dual algorithm for structured convex optimization}
\titlerunning{A golden ratio primal dual algorithm}
\author{Xiaokai Chang$^{1}$
\and  Junfeng Yang$^2$
}

\institute{
1 \ \   School of Science, Lanzhou University of Technology, Lanzhou, Gansu, P. R. China. Email: xkchang@lut.cn. \\
2  \ \  Department of Mathematics, Nanjing University, Nanjing, P. R. China. This author was supported by the NSFC grants 11771208 and 11922111. Email: jfyang@nju.edu.cn. }

\date{Received: date / Accepted: date}

\maketitle

\begin{abstract}
  We design, analyze and test a golden ratio primal-dual algorithm (GRPDA) for solving structured convex optimization problem, where the objective function is the sum of two closed proper convex functions, one of which involves a composition with a linear transform. GRPDA preserves all the favorable features of the classical primal-dual algorithm (PDA), i.e., the primal and the dual variables are updated in a Gauss-Seidel manner, and the per iteration cost is dominated by the evaluation of the proximal point mappings of the two component functions and two matrix-vector multiplications. Compared with the classical PDA, which takes an extrapolation step, the novelty of GRPDA is that it is constructed based on a convex combination of essentially the whole iteration trajectory. We show that GRPDA converges within a broader range of parameters than the classical PDA, provided that the reciprocal of the convex combination parameter is bounded above by the golden ratio, which explains the name of the algorithm. An $\cO(1/N)$ ergodic convergence rate result is also established based on the primal-dual gap function, where $N$ denotes the number of iterations. When either the primal or the dual problem is strongly convex, an accelerated GRPDA is constructed to improve the ergodic convergence rate from $\cO(1/N)$ to $\cO(1/N^2)$. Moreover, we show for regularized least-squares and linear equality constrained problems that the reciprocal of the convex combination parameter can be extended from the golden ratio to $2$ and meanwhile a relaxation step can be taken. Our preliminary numerical results on LASSO, nonnegative least-squares and minimax matrix game problems, with comparisons to some state-of-the-art relative algorithms, demonstrate the efficiency of the proposed algorithms.
\end{abstract}

\keywords{Structured convex optimization \and saddle point problem \and primal-dual algorithm \and golden ratio \and acceleration \and convergence rate \and fixed point iteration}

\subclass{49M29 \and  65K10 \and 65Y20 \and 90C25 }

\section{Introduction}
\label{sec_introduction}
Let $\R^p$ and $\R^q$ be finite-dimensional Euclidean spaces, each endowed with an inner product and the induced norm denoted by $\langle\cdot, \cdot\rangle$ and $\|\cdot\| =\sqrt{\langle\cdot,\cdot\rangle}$, respectively.
In this paper, we consider structured convex optimization problem of the form
\be\label{primal}
\min_{x\in \R^q} f(Kx)+g(x),
\ee
where  $f: \R^p\rightarrow(-\infty, +\infty]$ and $g: \R^q \rightarrow(-\infty, +\infty]$ are extended real-valued closed proper convex functions \cite{Rockafellar1970Convex}, and $K\in \R^{p\times q}$ is a linear operator from $\R^q$ to $\R^p$.
Problem \eqref{primal} arises from numerous applications, including signal and image processing, machine learning, statistics, mechanics and economics, to name a few, see, e.g., \cite{Chambolle2011A,Bouwmans2016Handbook,Yang2011Alternating,Hayden2013A,Bertsekas1982Projection}
and the references therein.
The saddle point or primal-dual form of \eqref{primal} reads
\be\label{pd-prob}
\min_{x\in \R^q}\max_{y\in \R^p} ~~\mathcal{L}(x,y):=g(x)+\langle Kx,y\rangle-f^*(y),
\ee
where $f^*(y) = \sup_{u\in \R^p} \langle y, u\rangle - f(u)$, $y\in \R^p$, is the Legendre-Fenchel conjugate of $f$.
Saddle point problems are ubiquitous in optimization as they provide a very convenient way to represent many nonsmooth problems. In particular, \eqref{pd-prob} is intrinsically related to the Fenchel dual problem of \eqref{primal}, which is given by
\be\label{dual}
\max_{y\in \R^p} -f^*(y) -g^*(-K^\top y),
\ee
where $K^\top$ denotes the matrix transpose or adjoint operator of $K$.
%
%

We note that in this paper we restrict the decision variable $x$ in $\R^q$, yet all the analysis can be extended to any finite dimensional real Euclidean spaces, e.g., the matrix space $\R^{m\times n}$ with trace inner product and the induced Frobenius norm.
In the rest of this section, we define some notation, make our assumptions, review some algorithms for solving \eqref{primal}-\eqref{dual} that are closely related to this work, and summarize our contributions and the organization of this paper.

\subsection{Notation and assumptions}
%
%
The matrix and vector transpose operation is denoted by superscript ``$\top$".
The operator norm of $K$ is denoted by $L$, i.e., $L := \|K\| = \|K^\top\| = \sup \{\langle Kx, y\rangle: \|x\|=\|y\|=1,\, x\in \R^q,\, y\in \R^p\}$. Throughout this paper, we denote the golden ratio by $\phi$, i.e., $\phi = {\sqrt{5}+1 \over 2}$.
Let $h$ be any extended real-valued closed proper convex function defined on a finite dimensional Euclidean space $\R^m$.
The effective domain of $h$ is denoted by $\text{dom}(h) := \{ x\in\R^m: h(x) < +\infty\}$, and the subdifferential of $h$ at $x\in \R^m$ is denoted by $\partial h(x) := \{\xi\in\R^m: \, h(y) \geq h(x) + \langle \xi, y-x\rangle \text{~for all~}y\in\R^m\}$. Furthermore, for $\lambda >0$, the proximal point mapping of $\lambda h$ is given by
\begin{eqnarray*}\label{def:prox}
  \prox_{\lambda h}(x) := \arg\min_{y\in \R^m } \Big\{h(y) + {1\over 2\lambda }\|y-x\|^2\Big\}, \quad x\in \R^m.
\end{eqnarray*}
Since $h$ is closed proper and convex, for any $\lambda >0$, $\prox_{\lambda h}$ is uniquely well defined everywhere.
The indicator function of a set $C$ is denoted by $\iota_C(x)$, i.e., $\iota_C(x) = 0$ if $x\in C$ and $+\infty$ if otherwise.
The relative interior of $C$ is denoted by $\text{ri}(C)$.
The identity operator or identity matrix is denoted by $I$, whose domain or order is clear from the context.
The zero vector or matrix is simply denoted by $0$. The composition of two operators is denoted by ``$\circ$".
The stagnation of two column vectors $u$ and $v$ is also denoted by $(u; v)$, i.e., $(u; v) = (u^\top, v^\top)^\top$.
The sequence of positive natural numbers is denoted by $\bN = \{1,2,3,\ldots\}$.
Other notation will be specified later.

Throughout the paper, we make the following blanket assumptions.
\begin{asmp}\label{asmp-1}
Assume that the set of solutions of \eqref{primal} is nonempty and,
in addition, there exists $\tilde x\in\text{ri}(\text{dom}(g))$ such that $K\tilde x\in\text{ri}(\text{dom}(f))$.
\end{asmp}

Under Assumption \ref{asmp-1}, it follows from \cite[Corollaries 28.2.2 and 28.3.1]{Rockafellar1970Convex} that
${\bar x}\in \R^q$ is a solution of \eqref{primal} if and only if there exists ${\bar y}\in \R^p$ such that $({\bar x},{\bar y})$ is a saddle point of $\cL(x,y)$, i.e.,
$\cL({\bar x},y)\leq\cL({\bar x},{\bar y})\leq\cL(x,{\bar y})$ for all $(x,y)\in \R^q\times \R^p$, and furthermore, such $\bar y$ is an optimal solution of the dual problem \eqref{dual}.
Throughout this paper, we denote the set of solutions of \eqref{pd-prob} by $\cS$, which is nonempty under  Assumption \ref{asmp-1} and
characterized by
\[\label{def:cS}
\cS := \{({\bar x},{\bar y})\in \R^q\times \R^p: \; 0 \in \partial g({\bar x}) + K^\top {\bar y} \text{~~and~~} 0\in \partial f^*({\bar y}) - K{\bar x}\}.
\]

In many applications including signal and image processing and machine learning, the two component functions in \eqref{primal} enforce, respectively, data fitting and regularization. In such cases, $f$ and $g$ usually preserve simple structures so that their proximal point mappings can be evaluated efficiently. Examples of such functions are abundant, see, e.g., \cite[Chapter 6]{Beck2017book}. We therefore make the following assumption.
\begin{asmp}\label{asmp-2}
  Assume that the proximal point mappings of the component functions $f$ and $g$ either have closed form formulas or can be evaluated efficiently.
\end{asmp}

\subsection{Related algorithms}\label{sc:related-algorithms}
To solve \eqref{primal}-\eqref{dual} simultaneously, one may resort to the well known alternating direction method of multipliers (ADMM) \cite{GM75,GabM76,Jonathan1992On,Lions1979Splitting},
%
%
the primal-dual algorithm (PDA) \cite{Chambolle2011A,He2012Convergence,Pock2011Diagonal,Esser2010General} and their accelerated and generalized variants \cite{Liu2018Acceleration,Malitsky2018A}.
Since the literature on numerical algorithms for solving \eqref{primal}-\eqref{dual} has become so vast, a thorough overview is far beyond the focus of this paper. Instead, we next review only some primal-dual type algorithms that are most closely related to our work.

A main feature of primal-dual type algorithms is that both the primal and the dual variables are updated at each iteration, and thus the primal and the dual problems are solved simultaneously. Among others, ADMM \cite{GM75,GabM76} has been well studied in the literature \cite{Lions1979Splitting,Jonathan1992On,Fazel2013Hankel} and widely used in practice \cite{Boyd2010Distributed}. The main difficulty encountered by ADMM when applied to (a reformulation of) \eqref{primal} is that a subproblem of the form $\min_{x\in \R^q}  \frac{1}{2}\|Kx-b_n\|^2 + g(x)$ needs to be solved at each iteration for some $b_n\in \R^p$ varying with the iteration counter $n$. We note that even though $\prox_g$ is easy to evaluate, this problem, which involves a linear operator $K$, has to be solved iteratively in general. On the other hand, when $f(Kx)$ has a least-squares structure, i.e., $f(Kx) = {1\over 2}\|Kx-b\|^2$ for some $b\in \R^p$, ADMM can be applied to the equivalent problem $\min_{x,y}\{ f(Kx) + g(y): \; x = y\}$. In this case, the $x$-subproblem appears as a least-squares problem and is equivalent to solving a linear system of equations with coefficient matrix of the form $I + \rho K^\top K$ for some $\rho>0$, which could be expensive or even prohibitive for large scale problems.

The simplest primal-dual type algorithm for solving \eqref{primal}-\eqref{dual}, which does not require to solve any subproblem iteratively or any linear system of equations, is probably the classical Arrow-Hurwicz method \cite{Uzawa58}, which, started at $(x_0,y_0)\in \R^q \times \R^p$, iterates as
\be\label{pda_basic0}
\left\{
\ba{l}
x_{n}=\prox_{\tau g}(x_{n-1}-\tau K^\top y_{n-1}), \smallskip\\
y_{n}=\prox_{\sigma f^*}(y_{n-1}+\sigma K x_{n}),
\ea\right.
\ee
for $n\geq 1$. Here $\tau>0$ and $\sigma>0$ are step size parameters. As a conjugate function, $f^*$ is always closed and convex. Furthermore,
$f^*$ is also proper since $f$ is proper and convex. Thus, $\prox_{\sigma f^*}$ is uniquely well defined everywhere. The iterative scheme \eqref{pda_basic0} is also known as primal dual hybrid gradient method in image processing community, see \cite{ZhC08cam,Esser2010General,Chambolle2011A}. The main computational cost per iteration is the evaluation of two proximal point mappings and two matrix-vector multiplications. 
The convergence of the Arrow-Hurwicz method was studied in \cite{Esser2010General} with very small step sizes, and $\cO(1/\sqrt{N})$ rate of convergence, measured by primal-dual gap, was obtained in \cite{Chambolle2011A,Nedic2009Subgradient} when the domain of $f^*$ is assumed to be bounded. However, the Arrow-Hurwicz method does not converge in general, see \cite{He2012Convergence} for a divergent example.

Chambolle and Pock \cite{Chambolle2011A,Pock2011Diagonal} adopted an extrapolation step after obtaining $x_n$ in \eqref{pda_basic0} to obtain ${\bar x}_{n}=x_{n}+\delta (x_{n}-x_{n-1})$ for some $\delta\in[0,1]$, which is then used to replace $x_n$ in \eqref{pda_basic0} in the update of $y_n$. The resulting scheme is nowadays widely accepted as PDA and appears as
\be\label{pda_basic}
\left\{
\ba{l}
x_{n}=\prox_{\tau g}(x_{n-1}-\tau K^\top y_{n-1}), \smallskip\\
{\bar x}_{n}=x_{n}+\delta (x_{n}-x_{n-1}), \smallskip\\
y_{n}=\prox_{\sigma f^*}(y_{n-1}+\sigma K {\bar x}_{n}).
\ea\right.
\ee
In the case $\delta=1$, the convergence of (\ref{pda_basic}) was established in \cite{Chambolle2011A} under the condition $\tau\sigma L^2 < 1$, where $L = \|K\|$.
Note that, for $\delta=1$ the scheme \eqref{pda_basic} is referred to as a split inexact Uzawa method by
Esser, Zhang and Chan \cite{Esser2010General}, where the connection of PDA with preconditioned or linearized ADMM has been revealed, see also \cite{Chambolle2011A,Shefi2014Rate}. Later, it was shown in \cite{He2012Convergence} that PDA \eqref{pda_basic} can be viewed as a weighted proximal point method applied to the variational inequality (VI) representation of the optimality conditions of \eqref{pd-prob}.
The overrelaxed, inertial and accelerated versions of (\ref{pda_basic}) were investigated in \cite{Chambolle2016ergodic}. See also \cite{Chambolle2018STOCHASTIC} for a stochastic variant of PDA.

The effectiveness of the PDA scheme \eqref{pda_basic} is essentially guaranteed by the extrapolation or inertial step ${\bar x}_{n}=x_{n}+\delta (x_{n}-x_{n-1})$ since the Arrow-Hurwicz method \eqref{pda_basic0}, which corresponds to $\delta=0$, fails to converge in general. For a fixed $\delta\in[0,1]$, the scheme \eqref{pda_basic} can be written abstractly as $(x_n,y_n) = T(x_{n-1},y_{n-1})$ for some suitably defined mapping $T$.
Therefore, PDA can be viewed as a one-step fixed point iterative method.
Recently, Malitsky \cite{Malitsky2019Golden} proposed a fully adaptive forward-backward type splitting algorithm, called golden ratio algorithm, for solving mixed VI problem, where a novel convex combination technique is introduced. It is shown that the golden ratio algorithm preserves $\cO(1/N)$ ergodic convergence and R-linear convergence under an error bound condition.
The mixed VI problem is to find $z^*\in \R^m $ such that
\begin{equation}\label{mVI}
\theta(z) - \theta(z^*) + \langle z-z^*, F(z^*)\rangle \geq 0, \quad \forall z \in \R^m ,
\end{equation}
where $\theta: \R^m  \rightarrow (-\infty, +\infty]$ is a closed proper convex function, and $F: \R^m  \rightarrow \R^m $ is a monotone mapping.
Given an initial point $z_0\in \R^m $ and let ${\bar z}_{0} = z_0$, a basic version of the golden ratio algorithm \cite{Malitsky2019Golden} applied to \eqref{mVI} iterates for $n\geq 1$ as
\begin{equation}\label{gr-alg}
\left\{
\begin{array}{rcl}
  \bar{z}_n &=&  {\phi - 1 \over \phi} z_{n-1} + {1\over \phi} {\bar z}_{n-1}, \smallskip \\
  z_{n} &=& \prox_{\tau \theta}(\bar{z}_n - \tau F(z_{n-1})),
\end{array}
\right.
\end{equation}
where $\tau >0$ is a step size parameter and $\phi $ is the golden ratio.
By induction, $\bar{z}_n$ is a convex combination of $\{z_i: i=0,1,\ldots,n-1\}$.
As a result, the update from $z_{n-1}$ to $z_n$ in \eqref{gr-alg} is essentially dependent on the whole iteration trajectory, which is vastly different from the one-step iterative scheme \eqref{pda_basic}.
Note that the primal-dual problem \eqref{pd-prob} is equivalent to  \eqref{mVI} with $\R^m  = \R^q\times \R^p$, endowed with the natural inner product $\langle (x,y), (u,v)\rangle := \langle x, u\rangle + \langle y, v\rangle$ for $(x,y), (u,v)\in \R^q\times \R^p$,
\begin{equation}\label{mVI-pd}
\theta(z) := g(x) + f^*(y) \text{~~and~~} F(z) := (K^\top y, -Kx).
\end{equation}
Given the special structure of $\theta$ and $F$ in \eqref{mVI-pd}, the golden ratio algorithm \eqref{gr-alg} breaks into
\begin{equation}\label{gr-alg2}
\left\{
\begin{array}{rclrcl}
  \bar{x}_n &=&  {\phi - 1 \over \phi} x_{n-1} + {1\over \phi} {\bar x}_{n-1},
  \;\; x_n &=& \prox_{\tau g}(\bar{x}_n - \tau K^\top y_{n-1}), \smallskip \\
  \bar{y}_n &=&  {\phi - 1 \over \phi} y_{n-1} + {1\over \phi} {\bar y}_{n-1},
  \;\; y_n &=& \prox_{\tau f^*}(\bar{y}_n + \tau Kx_{n-1}).
\end{array}
\right.
\end{equation}
Apparently, \eqref{gr-alg2} is a Jacobian type algorithm, which fails to fully utilize the latest available information.
Furthermore, the primal and dual step sizes are identically $\tau$, which offers less flexibility.
Given the nice convergence properties and the promising numerical results of golden ratio algorithm \cite{Malitsky2019Golden},
it is desirable to construct a Gauss-Seidel type golden ratio PDA for the saddle point problem \eqref{pd-prob}, which is able to fully take advantage of the problem structure. This motivated the current work.


\subsection{Contributions}
We adapt the convex combination technique \cite{Malitsky2019Golden} into the Arrow-Hurwicz scheme \eqref{pda_basic0} to
construct a Gauss-Seidel type golden ratio PDA (abbreviated as GRPDA), which not only preserves nice convergence properties but also performs favorably in practice.  Our main contributions are summarized below.
\bi
\item Using the convex combination technique  \cite{Malitsky2019Golden}, we construct a GRPDA with fixed step size parameters $\tau>0$ and $\sigma>0$. %
    Under the condition $\tau\sigma L^2<\psi\in (1, \phi]$, global convergence and  $\cO(1/N)$ ergodic rate of convergence measured by primal-dual gap function are established.
    Since $\tau\sigma L^2 > 1$ is permitted, GRPDA converges in a broader range of parameters compared to PDA, which typically requires $\tau\sigma L^2 < 1$.
    It is further explained that GRPDA is an ADMM-like method with a proximal term $\frac{1}{2}\|x-x_{n-1}\|_M^2$, where $M=\frac{1}{\tau}I-\sigma K^\top K$, plus an extra linear term. As a consequence of the extra linear term, the weighting matrix $M$ is allowed to be indefinite.
    %

\item When either $g$ or $f^*$ is strongly convex, an accelerated GRPDA is constructed to improve the  ergodic convergence rate from $\cO(1/N)$ to $\cO(1/N^2)$.

\item Let $b\in \R^p$. For regularized least-squares problem, i.e., $f(\cdot) = {1\over 2}\|\cdot - b\|^2$, or linear equality constrained problem, i.e., $f(\cdot) = \iota_{\{b\}}(\cdot)$, we show via spectral analysis that the permitted range of  $\psi$ can be extended from $(1,\phi]$ to $(1,2]$ and meanwhile a relaxation step can be taken as well, where the relaxation parameter lies in $(0,3/2)$. The analysis for these two special cases is based on the fixed point theory of averaged operators.

\item We carry out numerical experiments on LASSO, nonnegative least-squares and minimax matrix game problems, with comparisons to some state-of-the-art algorithms, to demonstrate the favorable performance of the proposed algorithms.
\ei

\subsection{Organization}
The rest of this paper is organized as follows.
In Section \ref{sec_preliminarries}, we summarize some useful facts and identities and define the primal-dual gap function used in subsequent analysis. The GRPDA with fixed step sizes is presented in Section \ref{sec_PDAU}, where its convergence and $\cO(1/N)$ ergodic  rate of convergence are established as well.
In Section \ref{sec_Acceleration}, we present an accelerated GRPDA that enjoys a faster $\cO(1/N^2)$ ergodic rate of convergence under the assumption that either $g$ or $f^*$ is strongly convex.
Section \ref{sec_special_case} is devoted to the analysis of two special cases, i.e., regularized least-squares problem and linear equality constrained problem.
Numerical results in comparison with some state-of-the-art relative algorithms on LASSO, nonnegative least-squares and minimax matrix game problems are given in Section \ref{sec_experiments} to demonstrate the efficiency of the proposed algorithms.
Finally, some concluding remarks are drawn in Section \ref{sec_conclusion}.

\section{Preliminaries}
\label{sec_preliminarries}

In this section, we summarize some useful facts and identities and define the primal-dual gap function, which will be useful in our analysis.

\begin{fact}\label{fact_proj}
For any extended real-valued closed proper convex function $h$  defined on an Euclidean space $\R^m $, $\lambda > 0$ and $x\in \R^m $,
it holds that  $p = \prox_{\lambda h}(x)$ if and only if
\ben
\langle p-x, y-p\rangle\geq \lambda \big( h(p)-h(y) \big),~~ \forall y\in \R^m.
\een
\end{fact}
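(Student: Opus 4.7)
The plan is to reduce the statement to the first-order optimality condition for the strongly convex minimization problem that defines $\prox_{\lambda h}$, and then unfold the definition of the subdifferential. Concretely, since $h$ is closed proper convex and $\lambda>0$, the function $\Phi(y):=h(y)+\tfrac{1}{2\lambda}\|y-x\|^2$ is strongly convex, proper, closed, and coercive, hence admits a unique minimizer; by definition this minimizer is $\prox_{\lambda h}(x)$. Thus $p=\prox_{\lambda h}(x)$ if and only if $p$ minimizes $\Phi$, which, by strong convexity, is equivalent to the first-order condition $0\in\partial\Phi(p)$.

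Next I would invoke the standard subdifferential sum rule (applicable here because the quadratic term is finite everywhere, so no constraint qualification is needed) to conclude $\partial\Phi(p)=\partial h(p)+\tfrac{1}{\lambda}(p-x)$. Hence the optimality condition becomes $\tfrac{1}{\lambda}(x-p)\in\partial h(p)$.

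Finally, I would write out what membership in $\partial h(p)$ means. By the very definition of the subdifferential stated in the preliminaries, $\tfrac{1}{\lambda}(x-p)\in\partial h(p)$ is equivalent to
\[
h(y)\;\ge\;h(p)+\bigl\langle \tfrac{1}{\lambda}(x-p),\,y-p\bigr\rangle\qquad\text{for all }y\in\R^m.
\]
Multiplying through by $\lambda>0$ and rearranging yields $\langle p-x,\,y-p\rangle\ge\lambda\bigl(h(p)-h(y)\bigr)$ for every $y$, which is exactly the claimed inequality. Since each step in the chain is an equivalence, both directions of the ``if and only if'' are obtained simultaneously.

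There is essentially no hard step here: the only thing to be slightly careful about is justifying the sum rule $\partial(h+\tfrac{1}{2\lambda}\|\cdot-x\|^2)=\partial h+\tfrac{1}{\lambda}(\cdot-x)$, but this is immediate because the quadratic has full domain and is differentiable, so Moreau--Rockafellar applies without qualification. Everything else is definitional, so the proof will be only a few lines.
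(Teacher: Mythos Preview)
Your argument is correct and is precisely the standard route: optimality of the proximal minimizer $\Leftrightarrow$ $0\in\partial h(p)+\tfrac{1}{\lambda}(p-x)$ $\Leftrightarrow$ the stated variational inequality. The paper does not actually prove this fact; it simply states that ``the proofs of Facts \ref{fact_proj} and \ref{fact_ab} are easily derived and thus are omitted,'' so there is nothing further to compare.
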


\begin{fact}\label{fact_ab}
Let $\{a_n\}_{n\in\bN}$ and $\{b_n\}_{n\in\bN}$ be two nonnegative real sequences. If there exists an integer $N>0$ such that
$a_{n+1} \leq a_n-b_n$ for all $n > N$, then $\lim_{n\rightarrow \infty}a_n$ exists and $\lim_{n\rightarrow \infty} b_n = 0$.
\end{fact}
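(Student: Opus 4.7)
The plan is straightforward because this is a classical real-analysis fact; the heart of the argument rests on recognizing that the recursion $a_{n+1} \leq a_n - b_n$ with $b_n \geq 0$ forces $\{a_n\}$ to be eventually monotone, while telescoping controls the sum of the $b_n$'s.

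First, I would observe that for every $n > N$, the inequality $a_{n+1} \leq a_n - b_n \leq a_n$ holds, so $\{a_n\}_{n > N}$ is nonincreasing. Combined with the hypothesis $a_n \geq 0$, the monotone convergence theorem for real sequences guarantees that $\lim_{n\to\infty} a_n$ exists (and is a nonnegative real number). This handles the first conclusion.

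Next, to show $b_n \to 0$, I would telescope the inequality. Rearranging gives $b_n \leq a_n - a_{n+1}$ for all $n > N$. Summing from $n = N+1$ to some $M > N+1$ yields
\[
\sum_{n=N+1}^{M} b_n \;\leq\; a_{N+1} - a_{M+1} \;\leq\; a_{N+1},
\]
since $a_{M+1} \geq 0$. The partial sums of the nonnegative series $\sum_{n>N} b_n$ are therefore bounded above by $a_{N+1}$, so the series converges. Convergence of a nonnegative series implies its general term tends to zero, i.e., $\lim_{n\to\infty} b_n = 0$.

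There is no substantive obstacle here: the only thing to watch is that the hypothesis is a one-sided (upper) bound on $a_{n+1}$ with a nonnegative slack $b_n$, which simultaneously encodes both monotonicity (giving existence of the limit) and summability of $\{b_n\}$ (giving $b_n \to 0$). Both conclusions are extracted from the single telescoping identity above.
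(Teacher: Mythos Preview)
Your proof is correct and complete. The paper itself omits the proof of this fact as elementary, so there is nothing to compare against; your argument via eventual monotonicity plus telescoping is exactly the standard one and would be what any reader supplies.
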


The proofs of Facts \ref{fact_proj} and \ref{fact_ab} are easily derived and thus are omitted.
The following elementary identities will be used in our analsis. 
For any $x, y, z\in \R^m $ and $\alpha\in\bR$, there hold
\be
\langle x-y, x-z\rangle&=& \frac{1}{2}\|x-y\|^2 +\frac{1}{2}\|x-z\|^2- \frac{1}{2}\|y-z\|^2,\label{id}\\
\|\alpha x+(1-\alpha)y\|^2&=& \alpha \|x\|^2+(1-\alpha)\|y\|^2-\alpha(1-\alpha)\|x-y\|^2.\label{id2}
\ee

We next define the primal-dual gap function that will be used in our analysis. Let $(\bar{x}, \bar{y})\in\cS$ be any saddle point of the primal-dual problem (\ref{pd-prob}). Then, there hold $K\bar{x} \in \partial f^*(\bar{y})$ and $-K^\top \bar{y}\in \partial g(\bar{x})$, which are equivalent to
\ben
\left\{
\begin{array}
{l}
P(x) := g(x)-g(\bar{x}) + \langle K^\top \bar{y}, x-\bar{x}\rangle \geq0, \quad ~\forall x\in \R^q, \smallskip \\
D(y) := f^*(y)-f^*(\bar{y}) - \langle K \bar{x}, y-\bar{y}\rangle \geq 0, \quad ~\forall y\in \R^p.
\end{array}
\right.
\een
The primal-dual gap function is defined by
\be\label{G}
G(x,y) := P(x)+D(y)\geq0,~\forall (x,y)\in \R^q \times \R^p.
\ee
This primal-dual gap function is also used in, e.g., \cite{Malitsky2018A,Chambolle2016ergodic}. Note that, for fixed $(\bar{x}, \bar{y})\in\cS$, $P(x)$ and $D(y)$, and thus $G(x,y)$, are convex. Although $P(\cdot)$, $D(\cdot)$ and $G(\cdot,\cdot)$ depend on $(\bar{x}, \bar{y})$, we do not indicate this dependence in our notation since it is always clear from the context.

\section{Golden ratio primal-dual algorithm}
\label{sec_PDAU}
In this section, we present our GRPDA with fixed step sizes and establish its convergence and $\cO(1/N)$ ergodic convergence rate.

\subsection{GRPDA with fixed step sizes}
Recall that  $\phi$ represents the golden ratio and $L = \|K\|$ is the operator norm of $K$.
Below, we introduce our GRPDA with fixed step sizes $\tau>0$ and $\sigma>0$.
\vskip5mm
\hrule\vskip2mm
\begin{algo}
[GRPDA with fixed step sizes]\label{alg-basic}
{~}\vskip 1pt {\rm
\begin{description}
\item[{\em Step 0.}] Let $\tau, \sigma >0$ and $\psi \in (1, \phi]$ be such that $\tau\sigma L^2 < \psi$.
Choose  $x_0\in \R^q$, $y_0\in \R^p$. Set $z_{0} = x_0$ and $n=1$.
\item[{\em Step 1.}]Compute
\be\label{GRPDA}
\left\{\ba{rcl}
 z_{n}&=&\frac{\psi-1}{\psi} x_{n-1} + \frac{1}{\psi}z_{n-1}, \smallskip \\
 x_{n}&=&\prox_{\tau g}(z_{n}-\tau K^\top y_{n-1}), \smallskip \\
 y_{n}&=&\prox_{\sigma f^*}(y_{n-1}+\sigma K x_{n}).
 \ea\right.\ee
\item[{\em Step 2.}] Set $n\leftarrow n + 1$ and return to Step 1. 
  \end{description}
}
\end{algo}
\vskip1mm\hrule\vskip5mm

The GRPDA \eqref{GRPDA} and the PDA \eqref{pda_basic} are quite similar, both of which are modifications of the Arrow-Hurwicz scheme (\ref{pda_basic0}). The difference is that PDA \eqref{pda_basic} adopts an inertial or extrapolation technique, while GRPDA \eqref{GRPDA} uses a convex combination $z_{n}=\frac{\psi-1}{\psi} x_{n-1} + \frac{1}{\psi}z_{n-1}$.
To our knowledge, this convex combination technique with $\psi$ upper bounded by the golden ratio $\phi$ was initially introduced by Malitsky \cite{Malitsky2019Golden} to solve monotone mixed VI problem \eqref{mVI}.  A direct adaptation of \cite[Eq. (10)]{Malitsky2019Golden} to the primal-dual problem \eqref{pd-prob} would give the algorithm \eqref{gr-alg2}, which is a Jacobian type algorithm and has less flexibility since the primal and the dual step sizes are required to be identical.
In contrast, GRPDA \eqref{GRPDA} is a combination of \cite{Malitsky2019Golden} with the Arrow-Hurwicz scheme \eqref{pda_basic0}, and it is of Gauss-Seidel type since it utilizes the latest available information. Compared with the direct adaptation \eqref{gr-alg2},  the primal and the dual step sizes in \eqref{GRPDA} are not necessarily identical, which offers more flexibility in practice. Moreover, compared with PDA \eqref{pda_basic}, which requires $\tau\sigma L^2 < 1$, the condition $\tau\sigma L^2 < \psi \in (1, \phi]$ required by GRPDA \eqref{GRPDA} permits larger step sizes. Apparently, GRPDA has the same per iteration cost as those of the Arrow-Hurwicz method and the PDA.

\subsection{Connection with ADMM and PDA}
It is well known that the classical PDA can be interpreted as a preconditioned or linearized ADMM with positive definition proximal term, see \cite{Chambolle2011A,Shefi2014Rate,Esser2010General}.
As the main difference between  GRPDA  and PDA  is that the extrapolation step in \eqref{pda_basic} is replaced by the convex combination in \eqref{GRPDA}, it is natural to connect GRPDA with the ADMM. To be specific, we define the augmented Lagrangian function of
$\min_{x,w}\{g(x)+f(w): Kx=w\}$, a reformulation of \eqref{primal}, as
\[\nonumber
{\cL}_{\sigma}(x,w,y) := g(x) + f(w) + \langle y, Kx - w\rangle + {\sigma \over 2}\|Kx-w\|^2,
\]
where $y\in \R^p$ is the Lagrange multiplier and $\sigma >0$ is the penalty parameter.
Then, by using the Moreau decomposition $y = \prox_{\lambda f}(y) +  \lambda  \prox_{{1\over\lambda} f^*}( {y \over \lambda})$ for any $\lambda > 0$ and $y\in \R^p$ and following \cite{Chambolle2011A,Shefi2014Rate,Esser2010General}, it is easy to show that GRPDA \eqref{GRPDA} is equivalent to
\be\label{ADMM}
\left\{\ba{rcl}
z_{n}&=&\frac{\psi-1}{\psi} x_{n-1} + {1\over \psi} z_{n-1}, \smallskip\\
x_{n}&=&\arg\min\limits_x\left\{ {\cL}_{\sigma}(x,w_{n-1},y_{n-1}) + \ell(x,z_n,x_{n-1},w_{n-1}) + \frac{1}{2}\|x-x_{n-1}\|^2_{M} \right\}, \smallskip\\
w_{n}&=&\arg\min\limits_w {\cL}_{\sigma}(x_n,w,y_{n-1}), \smallskip\\
 y_{n}&=&y_{n-1}+\sigma(Kx_n-w_n),
 \ea\right.
\ee
where $\ell(x,z_n,x_{n-1},w_{n-1})  = {1\over \tau} \langle x, z_n-x_{n-1} \rangle -  \sigma \langle Kx,   Kx_{n-1} - w_{n-1} \rangle$, $M= {1\over \tau} I - \sigma K^\top K$ and $w_0\in \R^p$ is arbitrary. Here the proximal term $\frac{1}{2}\|x-x_{n-1}\|^2_{M}$ is used to cancel out ${\sigma \over 2}\|Kx\|^2$ in the $x$-subproblem. Since, by following \cite{Chambolle2011A,Shefi2014Rate,Esser2010General}, the derivation of \eqref{ADMM} from \eqref{GRPDA} is standard,  we omit the details.
By discarding the linear term $\ell(x,z_n,x_{n-1},w_{n-1}) $ in \eqref{ADMM} and using the Moreau decomposition to carry out a similar reduction, we obtain
\ben
\left\{\ba{rcl}
{\bar y}_{n-1} &=& 2y_{n-1} - y_{n-2}, \smallskip\\
x_{n}&=& \prox_{\tau g}\big(x_{n-1} - \tau K^\top  {\bar y}_{n-1}\big), \smallskip\\
y_{n}&=& \prox_{\sigma f^*}(y_n + \sigma Kx_n),
 \ea\right.
\een
which is a form of PDA with $\delta = 1$.
Note that PDA requires $\tau\sigma L^2 < 1$ to guarantee the positive definiteness of $M$. In comparison, by introducing the linear term $\ell(x,z_n,x_{n-1},w_{n-1})$, GRPDA \eqref{ADMM} allows larger step sizes since $\tau\sigma L^2 < \psi \in (1,\phi]$ suffices for global convergence. In this case, $M$ is permitted to be indefinite. This broader convergence region of GRPDA is beneficial in practice.

\subsection{Convergence results}
We next establish the convergence and $\cO(1/N)$ ergodic convergence rate of GRPDA \eqref{GRPDA}. For convenience,  we let $\beta := \sigma/\tau$ in the rest of this section.  First, we present a useful lemma.

\begin{lem}\label{lem11}
Let $\{(x_n,y_n,z_n)\}_{n\in\bN}$ be the sequence generated by Algorithm \ref{alg-basic} from any initial point $(x_0, y_0)\in \R^q\times \R^p$ and $z_0=x_0$, and let $G(\cdot,\cdot)$ be defined as in (\ref{G}) with any fixed $(\bar{x}, \bar{y})\in \cS$.  Then, it holds for any $(x,y)\in \R^q\times \R^p$ that
\be
\nonumber
\tau G(x_n,y_n)
&\leq&\langle x_{n+1}-z_{n+1}, \bar{x}-x_{n+1}\rangle+ {1\over\beta}  \left\langle y_{n}-y_{n-1}, \bar{y}-y_{n}\right\rangle+\psi \left\langle x_{n}-z_{n+1}, x_{n+1}- x_n\right\rangle \\
\label{Lemma1-eq}
&&+ \tau \langle K^\top  (y_{n} - y_{n-1}), x_n-x_{n+1}\rangle.
\ee
\end{lem}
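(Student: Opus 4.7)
The plan is to obtain upper bounds on $\tau(g(x_n) - g(\bar{x}))$ and $\tau(f^*(y_n) - f^*(\bar{y}))$ separately via Fact~\ref{fact_proj}, combine them with the linear pieces of $\tau G(x_n,y_n)$, and convert the result into \eqref{Lemma1-eq} using the convex-combination identity $\psi(x_n - z_{n+1}) = x_n - z_n$, which follows immediately from $z_{n+1} = \frac{\psi-1}{\psi}x_n + \frac{1}{\psi}z_n$.

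For the primal side I would invoke Fact~\ref{fact_proj} twice. Applying it to $x_n = \prox_{\tau g}(z_n - \tau K^\top y_{n-1})$ with test point $y = x_{n+1}$ controls $\tau(g(x_n) - g(x_{n+1}))$, while applying it to $x_{n+1} = \prox_{\tau g}(z_{n+1} - \tau K^\top y_n)$ with test point $y = \bar{x}$ controls $\tau(g(x_{n+1}) - g(\bar{x}))$. Adding these two inequalities cancels the $g(x_{n+1})$ contributions and yields an upper bound on $\tau(g(x_n) - g(\bar{x}))$ containing $\langle x_n - z_n, x_{n+1} - x_n\rangle$, $\langle x_{n+1} - z_{n+1}, \bar{x} - x_{n+1}\rangle$, and two bilinear terms involving $K^\top y_{n-1}$ and $K^\top y_n$. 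For the dual side, applying Fact~\ref{fact_proj} to $y_n = \prox_{\sigma f^*}(y_{n-1} + \sigma Kx_n)$ with $y = \bar{y}$ and rescaling by $\tau/\sigma = 1/\beta$ gives $\tau(f^*(y_n) - f^*(\bar{y})) \leq \frac{1}{\beta}\langle y_n - y_{n-1}, \bar{y} - y_n\rangle - \tau\langle Kx_n, \bar{y} - y_n\rangle$.

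Adding the primal and dual bounds together with the remaining linear terms $\tau\langle K^\top\bar{y}, x_n-\bar{x}\rangle - \tau\langle K\bar{x}, y_n-\bar{y}\rangle$ that enter through the definition of $\tau G(x_n,y_n)$ produces two of the four right-hand side terms of \eqref{Lemma1-eq} in final form, together with a residual $\langle x_n - z_n, x_{n+1} - x_n\rangle$ and a bundle of five bilinear cross terms coupling $K$, $\bar{x}$, and $\bar{y}$. Using the adjoint identity $\langle K^\top v, u\rangle = \langle v, Ku\rangle$ and systematic regrouping, the $\bar{x}$- and $\bar{y}$-dependent pieces cancel telescopically and the survivors reassemble into exactly $\tau\langle K^\top(y_n - y_{n-1}), x_n - x_{n+1}\rangle$. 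The identity $x_n - z_n = \psi(x_n - z_{n+1})$ then rewrites the residual as $\psi\langle x_n - z_{n+1}, x_{n+1} - x_n\rangle$, delivering \eqref{Lemma1-eq}.

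The main obstacle I foresee is the bookkeeping in this bilinear simplification, i.e., verifying that the five $\bar{x}$- and $\bar{y}$-dependent cross terms collapse to the single clean expression $\tau\langle K^\top(y_n - y_{n-1}), x_n - x_{n+1}\rangle$; it is mechanical but requires careful tracking of signs and adjoint swaps. The conceptual step that makes everything work is the unusual pairing of the prox inequality at step $n$ (tested against $x_{n+1}$) with the prox inequality at step $n+1$ (tested against $\bar{x}$), which is what introduces the shifted quantities $x_{n+1}$ and $z_{n+1}$ on the right-hand side of \eqref{Lemma1-eq} and exposes the convex-combination identity at the end.
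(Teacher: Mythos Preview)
Your proposal is correct and follows essentially the same approach as the paper: three applications of Fact~\ref{fact_proj} (to $x_{n+1}$ with test point $\bar{x}$, to $y_n$ with test point $\bar{y}$, and to $x_n$ with test point $x_{n+1}$), addition and bilinear simplification, and finally the identity $x_n - z_n = \psi(x_n - z_{n+1})$. The paper compresses the bilinear bookkeeping you flag as the main obstacle into a single ``elementary calculations'' step, but the content is identical.
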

\proof
It follows from (\ref{GRPDA}) and Fact \ref{fact_proj} that
\be
\langle x_{n+1}-z_{n+1} + \tau K^\top y_{n}, ~ \bar{x}-x_{n+1}\rangle\geq \tau \big( g(x_{n+1})-g(\bar{x}) \big),\label{temp1_01}\\
\big\langle {1\over\beta} (y_{n}-y_{n-1}) - \tau K x_{n}, ~\bar{y}-y_{n}\big\rangle\geq \tau \big(f^*(y_{n})-f^*(\bar{y})\big).\label{temp1_001}
\ee
Similar to (\ref{temp1_01}), it holds that
\be
\langle x_{n}-z_{n} + \tau K^\top y_{n-1},~ x_{n+1}-x_{n}\rangle\geq \tau\big(g(x_{n})-g(x_{n+1})\big).\label{tem11}
\ee
Elementary calculations show that the addition of (\ref{temp1_01}), (\ref{temp1_001}) and (\ref{tem11}) gives
\ben
&&\langle x_{n+1}-z_{n+1},~  \bar{x}-x_{n+1}\rangle+ {1\over\beta}  \big\langle  y_{n}-y_{n-1}, ~ \bar{y}-y_{n}\big\rangle+\left\langle x_{n}-z_{n}, ~ x_{n+1}- x_{n}\right\rangle \nonumber\\
&& + \tau \langle K^\top (y_{n}-y_{n-1}), ~ x_{n}-x_{n+1}\rangle-\tau \langle K^\top \bar{y}, ~   x_{n}-\bar{x}\rangle +\tau \langle K\bar{x},~ y_{n}-\bar{y}\rangle\nonumber\\
&\geq&\tau\big(f^*(y_{n})-f^*(\bar{y})\big)+\tau\big(g(x_{n})-g(\bar{x})\big),
\een
which is the same as \eqref{Lemma1-eq} by considering $x_n - z_n = \psi (x_n - z_{n+1})$ (implied by the first equality in \eqref{GRPDA}) and following the definition of $G(\cdot,\cdot)$ in (\ref{G}). This completes the proof.
\endproof

Now, we are ready to establish the convergence of GRPDA \eqref{GRPDA}.

\begin{thm}\label{thm11}
Let $\{(x_n,y_n,z_n)\}_{n\in\bN}$ be the sequence generated by Algorithm \ref{alg-basic} from any initial point $(x_0, y_0)\in \R^q\times \R^p$ and $z_0=x_0$, and let $G(\cdot,\cdot)$ be defined as in (\ref{G}) with any fixed $(\bar{x}, \bar{y})\in \cS$.  Then, $\{(x_{n}, y_n)\}_{n\in\bN}$ converges to a solution of (\ref{pd-prob}), i.e., an element of $\cS$.
\end{thm}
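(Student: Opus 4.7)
The plan is to combine Lemma \ref{lem11} with the elementary identities (\ref{id})--(\ref{id2}) to assemble a nonincreasing Lyapunov quantity $V_n$, then exploit it in a standard Opial-style argument to extract convergence of the whole sequence to a point of $\cS$.

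First I would convert each inner product on the right-hand side of (\ref{Lemma1-eq}) into squared norms using (\ref{id}). The first term becomes $\tfrac12\|z_{n+1}-\bar x\|^2-\tfrac12\|x_{n+1}-\bar x\|^2-\tfrac12\|x_{n+1}-z_{n+1}\|^2$, and the second term becomes $\tfrac1{2\beta}\|y_{n-1}-\bar y\|^2-\tfrac1{2\beta}\|y_n-\bar y\|^2-\tfrac1{2\beta}\|y_n-y_{n-1}\|^2$. For the third term I would also use (\ref{id}) to produce $\tfrac{\psi}{2}\|x_n-z_{n+1}\|^2+\tfrac{\psi}{2}\|x_{n+1}-x_n\|^2-\tfrac{\psi}{2}\|x_{n+1}-z_{n+1}\|^2$, then exploit the recursion $x_n-z_n=\psi(x_n-z_{n+1})$ (from the first line of (\ref{GRPDA})) which lets me rewrite $\tfrac{\psi}{2}\|x_n-z_{n+1}\|^2=\tfrac{1}{2\psi}\|x_n-z_n\|^2$, i.e. the \emph{previous} iterate's bracketed quantity; this is what will eventually telescope. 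The cross term I would control by Young's inequality, $\tau\langle K^\top(y_n-y_{n-1}),x_n-x_{n+1}\rangle \leq \tfrac1{2\beta}\|y_n-y_{n-1}\|^2 + \tfrac{\tau\sigma L^2}{2}\|x_n-x_{n+1}\|^2$, which is where the assumption $\tau\sigma L^2<\psi$ enters: it guarantees that the $\|x_{n+1}-x_n\|^2$ coefficient stays strictly negative after combining with $\tfrac{\psi}{2}\|x_{n+1}-x_n\|^2$ from the previous step shifted by one index.

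Collecting everything and using $G(x_n,y_n)\geq 0$, I expect to arrive at an inequality of the form
\[
V_{n+1}\leq V_n - \rho\|x_{n+1}-x_n\|^2 - \gamma\|x_{n+1}-z_{n+1}\|^2
\]
with $\rho,\gamma>0$ depending on $\psi-\tau\sigma L^2>0$, where the Lyapunov candidate has the shape
\[
V_n \;=\; \tfrac{\psi}{2}\|z_{n+1}-\bar x\|^2 + \tfrac{1}{2\beta}\|y_n-\bar y\|^2 + \tfrac{1}{2\psi}\|x_n-z_n\|^2 + c\|x_n-x_{n-1}\|^2
\]
for a suitable constant $c>0$ chosen so that the $\|x_n-x_{n-1}\|^2$ contributions shifted between consecutive iterates cancel up to a strictly negative residual. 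The fact that the coefficients can be balanced at all relies on the golden-ratio inequality $\psi^2\leq \psi+1$, i.e. $\psi(\psi-1)\leq 1$, which is exactly why $\psi$ must lie in $(1,\phi]$. Working out the precise constant $c$ and checking the strict negativity of the coefficient of $\|x_{n+1}-x_n\|^2$ is, I expect, the main technical obstacle.

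Once the Lyapunov recursion is in hand, Fact \ref{fact_ab} gives: $V_n$ is bounded (hence $\{z_n\},\{y_n\}$ are bounded, and so is $\{x_n\}$ via $x_n-z_n=\psi(x_n-z_{n+1})$ together with $V_n$ bounding $\|x_n-z_n\|$), $\lim_n V_n$ exists, and $\|x_{n+1}-x_n\|\to 0$ and $\|x_{n+1}-z_{n+1}\|\to 0$. From $z_{n+1}-z_n=\tfrac{\psi-1}{\psi}(x_n-z_n)$ I also obtain $\|z_{n+1}-z_n\|\to 0$, and then from the dual update in (\ref{GRPDA}) together with the nonexpansiveness of $\prox_{\sigma f^*}$ it follows that $\|y_n-y_{n-1}\|\to 0$ as well.

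Finally I would extract a subsequence $(x_{n_k},y_{n_k})\to(x^\ast,y^\ast)$. Passing to the limit in the characterizations furnished by Fact \ref{fact_proj} applied to the last two lines of (\ref{GRPDA}), using the closedness of $\partial g$ and $\partial f^*$ (i.e. the maximal monotonicity of the subdifferentials) together with $\|x_{n+1}-z_{n+1}\|\to 0$ and $\|y_n-y_{n-1}\|\to 0$, yields $0\in\partial g(x^\ast)+K^\top y^\ast$ and $0\in\partial f^*(y^\ast)-Kx^\ast$, so $(x^\ast,y^\ast)\in\cS$ by (\ref{def:cS}). Since the Lyapunov argument applies for any choice of $(\bar x,\bar y)\in\cS$, I can specialize to $(\bar x,\bar y)=(x^\ast,y^\ast)$; the existence of $\lim_n V_n$ forces $V_n\to 0$ along the full sequence, and hence $(x_n,y_n)\to(x^\ast,y^\ast)$, completing the proof.
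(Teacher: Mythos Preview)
Your overall strategy—push Lemma~\ref{lem11} through the identities~(\ref{id})--(\ref{id2}), extract a monotone Lyapunov, then run an Opial argument—is exactly the paper's, and your endgame (subsequence in $\cS$, specialize $(\bar x,\bar y)$, full convergence) is fine. But there are two genuine gaps in the Lyapunov construction.

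\medskip
\textbf{The $\|x_{n+1}-\bar x\|^2$ vs.\ $\|z_{n+2}-\bar x\|^2$ mismatch.} Expanding the first inner product via (\ref{id}) produces $\tfrac12\|z_{n+1}-\bar x\|^2-\tfrac12\|x_{n+1}-\bar x\|^2$, as you say. But this does \emph{not} telescope: the ``old'' side carries $\|z_{n+1}-\bar x\|^2$ while the ``new'' side carries $\|x_{n+1}-\bar x\|^2$, and these are different quantities. Your proposed $V_n$ with leading term $\tfrac{\psi}{2}\|z_{n+1}-\bar x\|^2$ cannot absorb $-\tfrac12\|x_{n+1}-\bar x\|^2$ into $V_{n+1}$. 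The missing step is to use (\ref{id2}) on the relation $x_{n+1}=\tfrac{\psi}{\psi-1}z_{n+2}-\tfrac{1}{\psi-1}z_{n+1}$ to get
\[
\|x_{n+1}-\bar x\|^2=\tfrac{\psi}{\psi-1}\|z_{n+2}-\bar x\|^2-\tfrac{1}{\psi-1}\|z_{n+1}-\bar x\|^2+\tfrac{1}{\psi}\|x_{n+1}-z_{n+1}\|^2,
\]
which converts the $x_{n+1}$-term into $z$-terms that \emph{do} telescope (with coefficient $\tfrac{\psi}{\psi-1}$, not $\tfrac{\psi}{2}$). The extra $\tfrac{1}{\psi}\|x_{n+1}-z_{n+1}\|^2$ then combines with the $(\psi-1)\|x_{n+1}-z_{n+1}\|^2$ already present to give the coefficient $\psi-1-\tfrac{1}{\psi}\le 0$; this is where the golden-ratio bound enters, not via the $\|x_n-z_n\|^2$ telescoping you sketched. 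Relatedly, your expansion of the third inner product has a sign error: $\psi\langle x_n-z_{n+1},\,x_{n+1}-x_n\rangle=\tfrac{\psi}{2}\|x_{n+1}-z_{n+1}\|^2-\tfrac{\psi}{2}\|x_n-z_{n+1}\|^2-\tfrac{\psi}{2}\|x_{n+1}-x_n\|^2$, the opposite of what you wrote. With the correct signs there is no need for a $c\|x_n-x_{n-1}\|^2$ term in the Lyapunov.

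\medskip
\textbf{The claim $\|y_n-y_{n-1}\|\to 0$.} Your balanced Young inequality exactly cancels the $\tfrac{1}{2\beta}\|y_n-y_{n-1}\|^2$ term, so the Lyapunov recursion gives no control on the dual residual. You then try to recover it from nonexpansiveness of $\prox_{\sigma f^*}$, but that only yields $\|y_n-y_{n-1}\|\le \|y_{n-1}-y_{n-2}\|+\sigma L\|x_n-x_{n-1}\|$, which does \emph{not} force $\|y_n-y_{n-1}\|\to 0$ (it could converge to any nonnegative constant). The fix is to split the cross term with a slack factor $\zeta\in(0,1)$, writing $\tau=\zeta\sqrt{\psi/\beta}/L$, so that Young gives $\zeta\bigl(\tfrac{\psi}{2}\|x_{n+1}-x_n\|^2+\tfrac{1}{2\beta}\|y_n-y_{n-1}\|^2\bigr)$ and a strictly positive multiple of $\|y_n-y_{n-1}\|^2$ survives as a ``gain'' term in the descent inequality.
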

\proof
From Lemma \ref{lem11} and (\ref{id}), we obtain
\be
\nonumber &&\|x_{n+1}-\bar{x}\|^2+{1\over\beta} \|y_{n}-\bar{y}\|^2+ 2 \tau G(x_n,y_n)\\
\nonumber &\leq&\|z_{n+1}-\bar{x}\|^2 + {1\over\beta} \|y_{n-1}-\bar{y}\|^2  + 2\tau\langle K^\top (y_{n}-y_{n-1}),~ x_{n}-x_{n+1}\rangle\\
&&-\psi\|z_{n+1}-x_n\|^2+(\psi-1)\|x_{n+1}-z_{n+1}\|^2 -\psi\|x_{n+1}-x_n\|^2 -{1\over\beta} \|y_{n}-y_{n-1}\|^2. \label{thm3.1-eq1}
\ee
Since $x_{n+1}={\psi\over \psi-1} z_{n+2} - {1 \over \psi-1} z_{n+1}$, it follows from (\ref{id2}) that
\be\label{x-z-relation}
\|x_{n+1}-\bar{x}\|^2
&=& {\psi\over \psi-1} \|z_{n+2}-\bar{x}\|^2- {1 \over \psi-1} \|z_{n+1}-\bar{x}\|^2 + {\psi \over (\psi-1)^2} \|z_{n+2}-z_{n+1}\|^2\nonumber\\
&=& {\psi\over \psi-1} \|z_{n+2}-\bar{x}\|^2- {1 \over \psi-1} \|z_{n+1}-\bar{x}\|^2+\frac{1}{\psi}\|x_{n+1}-z_{n+1}\|^2,
\ee
where the second equality is due to $z_{n+2}-z_{n+1} = {\psi-1\over\psi} (x_{n+1} - z_{n+1})$.
By plugging \eqref{x-z-relation} into \eqref{thm3.1-eq1} and combining terms, we obtain
\be\label{ineq1}
&    &{\psi\over \psi-1} \|z_{n+2}-\bar{x}\|^2+{1\over\beta} \|y_{n}-\bar{y}\|^2 + 2 \tau G(x_n,y_n)\nonumber\\
&\leq& {\psi\over \psi-1} \|z_{n+1}-\bar{x}\|^2 + {1\over\beta} \|y_{n-1}-\bar{y}\|^2 + 2\tau\langle K^\top (y_{n}-y_{n-1}),~ x_{n}-x_{n+1}\rangle \nonumber\\
& & -\psi\|z_{n+1}-x_n\|^2 +(\psi-1-\frac{1}{\psi})\|x_{n+1}-z_{n+1}\|^2 -\psi\|x_{n+1}-x_n\|^2 -{1\over\beta} \|y_{n}-y_{n-1}\|^2 \nonumber\\
&\leq& {\psi\over \psi-1} \|z_{n+1}-\bar{x}\|^2 + {1\over\beta} \|y_{n-1}-\bar{y}\|^2 + 2\tau\langle K^\top (y_{n}-y_{n-1}),~ x_{n}-x_{n+1}\rangle \nonumber\\
& & -\psi\|z_{n+1}-x_n\|^2 - \psi\|x_{n+1}-x_n\|^2 -{1\over\beta} \|y_{n}-y_{n-1}\|^2,
\ee
where the second ``$\leq$" follows from $\psi-1-\frac{1}{\psi}\leq0$ as $\psi\leq\phi$.
Recall that $\beta = \sigma/\tau$. Then, $\tau\sigma L^2 < \psi$ is equivalent to  $\tau<  \sqrt{{\psi/\beta}}/L$.
Let $\tau=  \zeta  \sqrt{{\psi/\beta}}/L$ with $0<\zeta<1$. It follows from Cauchy-Schwarz inequality and $L = \|K^\top\|$ that
\ben
2\tau\langle K^\top  (y_{n}-y_{n-1}),~ x_{n}-x_{n+1}\rangle
&\leq&2\zeta \sqrt{{\psi/\beta}} \|y_{n}-y_{n-1}\|\|x_{n+1}-x_n\|\\
&\leq&\zeta \Big(\psi\|x_{n+1}-x_n\|^2 +  {1\over\beta} \|y_{n}-y_{n-1}\|^2\Big),
\een
which together with (\ref{ineq1})  leads to
\begin{eqnarray}\label{thm3.1-main}
  2 \tau G(x_n,y_n) + a_{n+1}  \leq a_n-b_n,
\end{eqnarray}
with
\begin{align}\label{anbn}
\left\{
\begin{array}{rcl}
a_n &=& {\psi \over \psi - 1}\|z_{n+1}-\bar{x}\|^2+{1\over\beta} \|y_{n-1}-\bar{y}\|^2, \smallskip\\
b_n &=& \psi\|z_{n+1}-x_n\|^2 +(1-\zeta)\big(\psi\|x_{n+1}-x_n\|^2+{1\over\beta} \|y_{n}-y_{n-1}\|^2\big).
\end{array}
\right.
\end{align}
Since $G(x_n,y_n)\geq0$, $a_{n}\geq0$ and $b_n \geq0$, it follows from Fact \ref{fact_ab} that $\lim_{n\rightarrow\infty}a_{n}$ exists and $\lim_{n\rightarrow\infty} b_n = 0$.
By the definition of $b_n$, this implies
\[\nonumber
\lim\limits_{n\rightarrow\infty}\|z_{n+1}-x_n\|
= \lim\limits_{n\rightarrow\infty}\|x_{n+1}-x_n\|
= \lim\limits_{n\rightarrow\infty}\|y_{n+1}-y_n\|=0.
\]
Furthermore, we also have $\lim\limits_{n\rightarrow\infty}\|x_{n} - z_{n}\|=0$ since $x_{n} - z_n = \psi (x_n - z_{n+1})$.

It follows from \eqref{anbn} and the existence of $\lim_{n\rightarrow\infty}a_{n}$ that the sequences $\{z_{n+1}\}_{n\in\bN}$ and $\{y_{n}\}_{n\in\bN}$, and thus
$\{(z_{n+1}, y_n)\}_{n\in\bN}$, are bounded. Since $\lim\limits_{n\rightarrow\infty}\|x_{n} - z_{n}\|=0$, $\{x_{n}\}_{n\in\bN}$ is also bounded.
Let $\{(x_{n_k+1},y_{n_k})\}_{k\in\bN}$ be a subsequence of $\{(z_{n+1}, y_n)\}_{n\in\bN}$, which is convergent to $(x^*,y^*)$.
Then $\lim_{n\rightarrow\infty}x_{n_k} = \lim_{n\rightarrow\infty}x_{n_k+1} = x^*$.
Similar to (\ref{temp1_01}) and (\ref{temp1_001}), for any $(x,y)\in \R^q\times \R^p$, there hold
\ben
\left\{\ba{l}
\langle x_{n_k+1}-z_{n_k+1} + \tau K^\top y_{n_k}, ~ x-x_{n_k+1}\rangle\geq \tau \big(g(x_{n_k+1})-g(x)\big), \smallskip \\
\big\langle {1\over\beta} (y_{n_k}-y_{n_k-1}) - \tau K x_{n_k}, ~y-y_{n_k}\big\rangle\geq \tau \big(f^*(y_{n_k})-f^*(y)\big).
\ea
\right.
\een
By driving $k\rightarrow\infty$, taking into account that both $g$ and $f^*$ are closed (and thus lower semicontinuous) and cancelling out $\tau>0$, we obtain
\ben
\langle  K^\top y^*, ~ x-x^*\rangle\geq  g(x^*)-g(x) \text{~~and~~} - \langle    K x^*, ~y-y^*\rangle\geq  f^*(y^*)-f^*(y),
\een
which hold for any $(x,y)\in \R^q\times \R^p$. This implies that $(x^*,y^*)\in \cS$.
Since all the discussions remain valid for any $(\bar{x},\bar{y})\in \cS$, $(\bar{x},\bar{y})$ can be replaced by $(x^*,y^*)$
in the definition of $\{a_n\}_{n\in \bN}$ in the first place. As such, we have $\lim_{k\rightarrow\infty} a_{n_k} = 0$ since
\begin{eqnarray*}
\lim_{k\rightarrow\infty} \|z_{n_k+1}-x^*\| = \lim_{k\rightarrow\infty} \|x_{n_k}-x^*\| = 0
\text{~~and~~}
\lim_{k\rightarrow\infty} \|y_{n_k-1}-x^*\| = \lim_{k\rightarrow\infty} \|y_{n_k}-x^*\| = 0.
\end{eqnarray*}
Since $\{a_n\}_{n\in\bN}$ is monotonically nonincreasing, it follows that $\lim_{n\rightarrow\infty} a_n = 0$ and thus
\ben
\lim_{n\rightarrow\infty}x_n = \lim_{n\rightarrow\infty}z_{n+1} = x^* \text{~~and~~} \lim_{n\rightarrow\infty}y_n = y^*.
\een
This completes the proof.
\endproof

\begin{rem}\label{rem-phi}
Our analysis is motivated by Malitsky \cite{Malitsky2019Golden}. In particular, to get rid of the term $\|x_{n+1}-z_{n+1}\|^2$ in (\ref{ineq1}), it is required that $\psi-1-{1\over \psi} \leq 0$, and apparently the golden ratio is an upper bound of such $\psi$. As long as $\tau\sigma L^2 < \psi$ is satisfied, larger $\psi$ allows larger step sizes $\tau$ and $\sigma$, which is helpful to improve numerical performance.
On the other hand, $\psi > 1$ is necessary to guarantee the nonnegativity of $\{a_n\}_{n\in\bN}$.
\end{rem}

\subsection{Ergodic convergence rate}
\label{sec_Rate}
For convex-concave saddle point problems, many algorithms exhibit $\cO(1/N)$ ergodic rate of convergence, see \cite{Chambolle2016ergodic,Monteiro2011Complexity,Nemirovski2006Prox-Method}. It is shown by Nemirovski \cite{Nemirovski2006Prox-Method} that this rate is in fact tight.
We next establish the same ergodic rate of convergence for GRPDA, where the measure is the primal-dual gap function defined in \eqref{G}.

\begin{thm}\label{thm2}
Let $\{(x_n,y_n,z_n)\}_{n\in\bN}$ be the sequence generated by Algorithm \ref{alg-basic} from any initial point $(x_0, y_0)\in \R^q\times \R^p$ and $z_0=x_0$, and let $G(\cdot,\cdot)$ be defined as in (\ref{G}) with any fixed $(\bar{x}, \bar{y})\in \cS$. Let $N\geq 1$ be any integer and define
\be\label{definition_XY0}
X_N=\frac{1}{N}\sum\nolimits_{n=1}^N x_n~~\mbox{and}~~Y_N=\frac{1}{N}\sum\nolimits_{n=1}^N y_n.
\ee
Then, it holds that
\ben
G(X_N,Y_N) \leq \frac{1}{2\tau N}\Big({\psi \over \psi-1} \|z_{2}-\bar{x}\|^2  + {1\over\beta}  \|y_{0}-\bar{y}\|^2\Big).
\een
\end{thm}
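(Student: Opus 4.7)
The plan is to leverage the key one-step inequality already established in the proof of Theorem~\ref{thm11}, namely
\[
2\tau G(x_n, y_n) + a_{n+1} \leq a_n - b_n,
\]
where
\[
a_n = \tfrac{\psi}{\psi-1}\|z_{n+1}-\bar{x}\|^2 + \tfrac{1}{\beta}\|y_{n-1}-\bar{y}\|^2, \qquad b_n \geq 0,
\]
and then convert this pointwise bound on the gap at each iterate into an ergodic bound via Jensen's inequality.

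First, I would sum the inequality above from $n=1$ to $n=N$ and telescope the $a_n$ terms. Dropping the nonnegative quantities $a_{N+1}$ and $\sum_{n=1}^{N} b_n$ on the right-hand side yields
\[
2\tau \sum_{n=1}^{N} G(x_n, y_n) \;\leq\; a_1 \;=\; \tfrac{\psi}{\psi-1}\|z_{2}-\bar{x}\|^2 + \tfrac{1}{\beta}\|y_{0}-\bar{y}\|^2.
\]
Dividing by $2\tau N$ gives an $\cO(1/N)$ bound on the average of the gap values $G(x_n, y_n)$.

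Next, I would invoke convexity of $G(\cdot,\cdot)$ with respect to $(x,y)$, which holds for any fixed $(\bar{x},\bar{y})\in\cS$, as noted after the definition of $G$ in \eqref{G} (both $P$ and $D$ are affine perturbations of convex functions). By Jensen's inequality applied to the averages $X_N$ and $Y_N$ defined in \eqref{definition_XY0},
\[
G(X_N, Y_N) \;\leq\; \frac{1}{N}\sum_{n=1}^{N} G(x_n, y_n),
\]
and combining this with the summed estimate above yields the claimed bound.

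There is no real obstacle here: all the heavy lifting was done in establishing inequality \eqref{thm3.1-main} inside the proof of Theorem~\ref{thm11}. The only things to verify carefully are (i) that $a_1$ takes exactly the form appearing on the right-hand side of the theorem statement (which follows from the definition of $a_n$ at $n=1$, noting that $z_2 = \tfrac{\psi-1}{\psi}x_1 + \tfrac{1}{\psi}z_1$ is well-defined from the initialization $z_0=x_0$), and (ii) that the convexity of $G(\cdot,\cdot)$ indeed permits the Jensen step. Both are immediate, so the proof reduces to a clean telescoping-plus-Jensen argument.
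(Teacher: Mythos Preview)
Your proposal is correct and follows essentially the same approach as the paper: telescope the inequality $2\tau G(x_n,y_n)\leq a_n-a_{n+1}$ from the proof of Theorem~\ref{thm11}, drop the nonnegative tail, and apply convexity of $G$ to pass to the ergodic average. The only cosmetic difference is that the paper exploits the separable structure $G(x,y)=P(x)+D(y)$ and applies convexity to $P$ and $D$ separately, whereas you apply Jensen directly to $G$; these are equivalent here.
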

\proof
It follows from \eqref{thm3.1-main} and \eqref{anbn} that $2\tau G(x_n,y_n) \leq a_n - a_{n+1}$ for all $n\geq 1$.
Sum this up for $n= 1, \ldots, N$, we obtain $2\tau \sum_{n=1}^N G(x_n,y_n) \leq a_1 - a_{N+1} \leq a_1$.
Since $G(x,y) = P(x) + D(y)$ and $P(x)$ and $D(y)$ are, respectively, convex in $x$ and $y$, it follows from (\ref{definition_XY0}) that
\ben
G(X_N, Y_N) = P(X_N)+D(Y_N) \leq {1\over N} \sum_{n=1}^N P(x_n) + {1\over N} \sum_{n=1}^ND(y_n)  = {1\over N}\sum_{n=1}^N G(x_n,y_n).
\een
The conclusion follows since $a_1 = {\psi \over \psi-1} \|z_{2}-\bar{x}\|^2  + {1\over\beta}  \|y_{0}-\bar{y}\|^2$.
\endproof

\section{Accelerated GRPDA}
\label{sec_Acceleration}

When either $g$ or $f^*$ is strongly convex, it was shown in \cite{Chambolle2011A} that one can adaptively choose the primal and the dual step sizes, as well as the inertial parameter, so that PDA achieves a faster $\cO(1/N^2)$ convergence rate. Similar results have been achieved in \cite{Malitsky2018A} for PDA with variable step sizes from linesearch.
In this section, we show that the same is true for GRPDA provided that the parameters are chosen adaptively. Similar to \cite{Chambolle2011A,Malitsky2018A}, the ratio $\beta = \sigma/\tau$ plays an important role.

Under the regularity condition specified in Assumption \ref{asmp-1}, the ``$\min$" and the ``$\max$" in the primal dual problem (\ref{pd-prob}) can be switched \cite[Corollary 31.2.1]{Rockafellar1970Convex}, and thus \eqref{pd-prob} is equivalent to
\be\label{pd-prob2}
\min_{y\in \R^p} \max_{x\in \R^q}  ~~ f^*(y) - \langle K^\top y, x\rangle -g(x),
\ee
As such, by switching the roles of $(g,K,x,q)$ and $(f^*,-K^T,y,p)$, \eqref{pd-prob2} is reducible to (\ref{pd-prob}). Therefore, we will only
treat the case when $g$ is strongly convex for succinctness, and it is completely analogues when $f^*$ is strongly convex.
In this section, we thus make the following assumption.

\begin{asmp}
Assume that $g$ is $\gamma$-strongly convex, i.e., for some $\gamma>0$ it holds that
\ben
g(y) \geq g(x) + \langle u, y-x\rangle +
\frac{\gamma}{2}\|y-x\|^2,~~\forall \, x, y\in \R^q, \; \forall\, u\in \partial g(x).
\een
\end{asmp}

%

Our accelerated GRPDA is presented below. Recall that  $\phi$ denotes the golden ratio.

\vskip5mm
\hrule\vskip2mm
\begin{algo}
[Accelerated GRPDA when $g$ is $\gamma$-strongly convex]\label{alg-acc}
{~}\vskip 1pt {\rm
\begin{description}
\item[{\em Step 0.}]
  Let $\psi_0 = 1.3247...$ be the unique real root of $\psi^3-\psi-1 = 0$.
  Choose $\psi\in(\psi_0,\phi)$, $\beta_0>0$, $x_0\in \R^q$ and $y_0\in \R^p$, and set
  $z_0=x_0$, $\varphi=\frac{1+\psi}{\psi^2}$, $\tau_0=\frac{1}{L}\sqrt{{\psi\over\beta_0}}$ and $n=1$.
\item[{\em Step 1.}]Compute
\be
z_{n}&=& {\psi-1 \over \psi} x_{n-1} + \frac{1}{\psi}z_{n-1}, \label{GRPDA-acc-z}\\
x_{n}&=&\prox_{\tau_{n-1} g}(z_{n}-\tau_{n-1} K^\top y_{n-1}). \label{GRPDA-acc-x}
\ee
\item[{\em Step 2.}] Compute
 \be
 \omega_n&=&\frac{\psi-\varphi}{\psi+\varphi\gamma\tau_{n-1}},\label{GRPDA-acc-omega}\\
 \beta_{n}&=&\beta_{n-1}(1+\omega_n\gamma \tau_{n-1}), \label{GRPDA-acc-beta}\\
 \tau_n&=&\min\Big\{\varphi\tau_{n-1},~~\frac{\psi}{\tau_{n-1}\beta_nL^2}\Big\},\label{GRPDA-acc-tau}\\
 y_{n}&=&\prox_{\beta_n\tau_n f^*}(y_{n-1}+\beta_n\tau_n K x_{n}).\label{GRPDA-acc-y}
\ee
\item[{\em Step 3.}] Set $n\leftarrow n + 1$ and return to Step 1. 
  \end{description}
}
\end{algo}
\vskip1mm\hrule\vskip5mm

\begin{rem}
We present the following remarks on Algorithm \ref{alg-acc}.
\begin{enumerate}
  \item In Algorithm \ref{alg-acc}, $\omega_n$ is used to update $\beta_n$, which plays a key role in establishing the $\cO(1/N^2)$ convergence rate. The condition $\psi > \psi_0$, where $\psi_0$ is the unique real root of $\psi^3-\psi-1=0$, ensures that $\omega_n>0$.

  \item To achieve the $\cO(1/N^2)$ convergence rate,  a balance between the increase rate  $\tau_n/\tau_{n-1}$ and the size of $\tau_n$ is maintained in (\ref{GRPDA-acc-tau}).
      In particular, the ratio $\tau_n/\tau_{n-1} \leq \varphi\in (1,\psi)$ for $\psi \in (\psi_0, \phi)$.
      It follows from $\varphi=\frac{1+\psi}{\psi^2}$ that larger $\psi$ gives smaller bound $\varphi$, and vice versa.

  \item Assume $\gamma=0$. Then $\beta_n = \beta_0$ for all $n\geq 1$. Furthremore, it is easy to show by induction that, for all $n\geq 1$,
    the ``$\min$" in \eqref{GRPDA-acc-tau} is always attained by the second term on the right hand side and thus $\tau_n = \tau_0$ for all $n\geq 1$.
  Therefore, Algorithm \ref{alg-acc} reduces to Algorithm \ref{alg-basic}, i.e.,  GRPDA with fixed step sizes $\tau = \tau_0$ and $\sigma=\sigma_0:=\beta_0\tau_0$, in which case
      $\tau_0\sigma_0 L^2 = \psi < \phi$ mets the requirement of Algorithm \ref{alg-basic} as we require $\psi < \phi$ in Algorithm \ref{alg-acc}.

\end{enumerate}

\end{rem}

We next establish some useful properties of $\tau_n$, $\beta_n$ and $\omega_n$ generated in Algorithm \ref{alg-acc}.
For convenience, we let $\delta_n = {\tau_n \over \tau_{n-1}}$ in the rest of this section.

\begin{lem}\label{lem:parameters}
Let $\{(\omega_n, \beta_n, \tau_n)\}_{n\in\bN}$ be generated by Algorithm \ref{alg-acc} and define $\delta_n = {\tau_n \over \tau_{n-1}}$ for $n\geq 1$. Then, for all $n\geq 1$, there hold $0<\beta_{n-1} \leq \beta_{n}$, $\frac{\psi-\varphi}{\psi+\varphi\gamma\sqrt{\varphi}\tau_0}\leq\omega_n<1$  and
\be\label{tau_n_properties}
     \frac{1}{\sqrt{\varphi(1+\gamma\sqrt{\varphi}\tau_0)}}\frac{\sqrt{\psi}}{L\sqrt{\beta_{n}}}
\leq \tau_{n}
\leq \frac{\sqrt{\delta_n\psi}}{L\sqrt{\beta_{n}}}
\leq \frac{\sqrt{\varphi\psi}}{L\sqrt{\beta_{n}}}
\leq \frac{\sqrt{\varphi\psi}}{L\sqrt{\beta_0}}
= \sqrt{\varphi} \tau_0.
\ee
Moreover, there exists a constant $c>0$ such that $\beta_n\geq c n^2$ for all $n\geq1$.
\end{lem}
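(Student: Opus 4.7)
The plan is to prove all of the stated inequalities by a coupled induction on $n$, tracking the scalar quantity $u_n := \tau_n^2 \beta_n L^2$, which cleanly encodes where the minimum in \eqref{GRPDA-acc-tau} is attained. The initialization $\tau_0 = \frac{1}{L}\sqrt{\psi/\beta_0}$ gives $u_0 = \psi$, and the recursion splits into two cases: if $\tau_n = \varphi \tau_{n-1}$, then $u_n = \varphi^2 (\beta_n/\beta_{n-1}) u_{n-1}$, whereas if $\tau_n = \psi/(\tau_{n-1}\beta_n L^2)$, then $\tau_n \tau_{n-1}\beta_n L^2 = \psi$, which gives $u_n = \psi \delta_n$ with $\delta_n = \tau_n/\tau_{n-1}$. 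This trichotomy is the workhorse for all four conclusions.

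First I would establish the two easiest items. The monotonicity $0 < \beta_{n-1}\le\beta_n$ is immediate from \eqref{GRPDA-acc-beta} provided $\omega_n>0$; and $\omega_n>0$ in turn follows from $\psi>\varphi$, which is exactly the inequality $\psi^3-\psi-1>0$ that $\psi>\psi_0$ ensures (recalling $\varphi=(1+\psi)/\psi^2$). The bound $\omega_n<1$ reduces algebraically to $-\varphi<\varphi\gamma\tau_{n-1}$, which is automatic. For the upper chain on $\tau_n$, the inequality $\tau_n\le\varphi\tau_{n-1}$ gives $\delta_n\le\varphi$, and $\tau_n\le\psi/(\tau_{n-1}\beta_n L^2)$ gives $u_n\le\delta_n\psi$; rearranging yields $\tau_n\le \sqrt{\delta_n\psi}/(L\sqrt{\beta_n})\le \sqrt{\varphi\psi}/(L\sqrt{\beta_n})$, and since $\beta_n\ge\beta_0$ the terminal equality $\sqrt{\varphi\psi}/(L\sqrt{\beta_0})=\sqrt{\varphi}\tau_0$ is just the definition of $\tau_0$.

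The main obstacle is the lower bound $\tau_n\ge \frac{1}{\sqrt{\varphi(1+\gamma\sqrt{\varphi}\tau_0)}}\frac{\sqrt{\psi}}{L\sqrt{\beta_n}}$, equivalent to $u_n\ge C\psi$ with $C:=1/(\varphi(1+\gamma\sqrt{\varphi}\tau_0))$. I would establish this by induction, using that already $\tau_{n-1}\le\sqrt{\varphi}\tau_0$ and $\omega_n<1$ imply the key bound $\beta_n/\beta_{n-1}=1+\omega_n\gamma\tau_{n-1}\le 1+\gamma\sqrt{\varphi}\tau_0$. In the first case $\tau_n=\varphi\tau_{n-1}$ we obtain $u_n=\varphi^2(\beta_n/\beta_{n-1})u_{n-1}\ge u_{n-1}\ge C\psi$ since $\varphi>1$ (note $\varphi=1$ only at $\psi=\phi$, whereas we assume $\psi<\phi$, so $\varphi>1$ strictly on $(\psi_0,\phi)$). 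In the second case $u_n=\psi\delta_n$, and the explicit formula $\delta_n = (\psi/u_{n-1})(\beta_{n-1}/\beta_n)$ combined with the inductive upper bound $u_{n-1}\le\varphi\psi$ and the ratio bound above yields $\delta_n\ge(1/\varphi)\cdot 1/(1+\gamma\sqrt{\varphi}\tau_0)=C$, hence $u_n\ge C\psi$. The lower bound on $\omega_n$ then drops out by monotonicity of $(\psi-\varphi)/(\psi+\varphi\gamma t)$ in $t$ together with the upper bound $\tau_{n-1}\le\sqrt{\varphi}\tau_0$.

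Finally, for $\beta_n\ge c n^2$ I would substitute the lower bounds into \eqref{GRPDA-acc-beta}: $\omega_n\gamma\tau_{n-1}$ is bounded below by a positive constant multiple of $1/\sqrt{\beta_{n-1}}$, giving a recursion of the form $\beta_n\ge\beta_{n-1}+\kappa\sqrt{\beta_{n-1}}$ for some $\kappa>0$. A standard comparison argument (comparing $\sqrt{\beta_n}$ with the arithmetic sequence $\sqrt{\beta_{n-1}}+\kappa/2$ via $\sqrt{a+\kappa\sqrt{a}}\ge\sqrt{a}+\kappa/(2\sqrt{1+\kappa/\sqrt{a}})\ge\sqrt{a}+\kappa/2$ once $\sqrt{a}$ is large enough) shows $\sqrt{\beta_n}$ grows at least linearly in $n$, which gives $\beta_n\ge cn^2$ for a suitable constant $c>0$. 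I expect the interlocked induction for the lower bound on $u_n$ to be the one subtle step, because it must be run \emph{simultaneously} with the already-established upper bound $u_{n-1}\le\varphi\psi$; everything else is a direct computation from the recursions.
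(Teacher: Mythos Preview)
Your proposal is correct and follows essentially the same route as the paper: direct derivation of the upper chain in \eqref{tau_n_properties} from the definition of the $\min$ in \eqref{GRPDA-acc-tau}, an induction for the lower bound on $\tau_n$ that splits on which branch of the $\min$ is active and feeds in the already-established upper bound $\tau_{n-1}\le\sqrt{\varphi\psi}/(L\sqrt{\beta_{n-1}})$, and finally the recursion $\beta_{n+1}\ge\beta_n+c_0\sqrt{\beta_n}$ to get quadratic growth. Your auxiliary quantity $u_n=\tau_n^2\beta_n L^2$ is a clean repackaging but not a different idea; the paper carries out the same two-case induction directly on $\tau_n$ and then verifies $\beta_n\ge cn^2$ by an explicit induction with $c=\min(c_0^2/9,\beta_1)$ rather than your asymptotic comparison (note your displayed inequality ``$\ge\sqrt{a}+\kappa/2$'' is never literally attained, but any uniform positive lower bound on the increment suffices, so the argument goes through).
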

\proof
Since $\beta_0>0$, $\omega_n > 0$ for $n\geq 1$ and $\tau_n > 0$ for $n\geq 0$, it is clear from \eqref{GRPDA-acc-beta} that $0<\beta_{n-1} \leq \beta_{n}$ for all $n\geq 1$. Recall that $\delta_n = {\tau_n \over \tau_{n-1}}$. It follows from (\ref{GRPDA-acc-tau}) that
$\tau_n^2 \beta_n
\leq \frac{\delta_n\psi}{L^2}
\leq \frac{\varphi\psi}{L^2}$.
Further considering $\beta_n\geq \beta_0>0$ for all $n\geq 1$ and $\tau_0 = \sqrt{\psi/\beta_0}/L$, we see that
all the relations in \eqref{tau_n_properties}, except the first one, follow.
Now, the relation $\frac{\psi-\varphi}{\psi+\varphi\gamma\sqrt{\varphi}\tau_0}\leq\omega_n$ follows
from \eqref{GRPDA-acc-omega} and $\tau_{n-1} \leq \sqrt{\varphi}\tau_0$ for all $n\geq 1$. On the other hand,
$\omega_n < 1$ is obvious. We next prove the first inequality in \eqref{tau_n_properties} by induction.

It  follows from $\tau_{0}=\frac{\sqrt{\psi}}{\sqrt{\beta_0}L}$ and $\varphi > 1$ that
$\tau_{0}\geq\frac{1}{\sqrt{\varphi(1+\gamma\sqrt{\varphi}\tau_0)}}\frac{\sqrt{\psi}}{L\sqrt{\beta_{0}}}$.
Now, suppose that $\tau_{k}\geq\frac{1}{\sqrt{\varphi(1+\gamma\sqrt{\varphi}\tau_0)}}\frac{\sqrt{\psi}}{L\sqrt{\beta_{k}}}$ for $0\leq k\leq n$. Then,   \eqref{GRPDA-acc-tau} implies that either
\ben
\tau_{n+1}=\varphi\tau_{n}\geq\tau_{n}\geq\frac{1}{\sqrt{\varphi(1+\gamma\sqrt{\varphi}\tau_0)}} \frac{\sqrt{\psi}}{L\sqrt{\beta_{n}}}\geq \frac{1}{\sqrt{\varphi(1+\gamma\sqrt{\varphi}\tau_0)}}\frac{\sqrt{\psi}}{L\sqrt{\beta_{n+1}}}
\een
since $\beta_{n+1}\geq\beta_n$ and $\varphi>1$, or
\ben
\tau_{n+1}
=  \frac{\psi}{\tau_{n} \beta_{n+1}L^2}
&\geq& \frac{L\sqrt{\beta_{n}}}{\sqrt{\varphi\psi}} \frac{\psi}{\beta_{n+1}L^2}
=\sqrt{\frac{\beta_{n}}{\varphi\beta_{n+1}}} \frac{\sqrt{\psi}}{\sqrt{\beta_{n+1}}L}\\
&=&\frac{1}{\sqrt{{\varphi(1+\omega_{n+1}\gamma\tau_{n})}}}\frac{\sqrt{\psi}}{\sqrt{\beta_{n+1}}L}\\
&\geq&\frac{1}{\sqrt{\varphi(1+\gamma\sqrt{\varphi}\tau_0)}}\frac{\sqrt{\psi}}{\sqrt{\beta_{n+1}}L},
\een
where the first inequality follows from $\tau_{n}\leq \frac{\sqrt{\varphi\psi}}{L\sqrt{\beta_{n}}}$ and the second is due to
$\omega_{n+1} < 1$ and $\tau_n\leq\sqrt{\varphi}\tau_0$ for all $n\geq 0$. Therefore, the first inequality in \eqref{tau_n_properties} also holds for all $n\geq 1$.

To complete the proof, it only remains to show that there exists $c>0$ such that $\beta_n \geq c n^2$ for all $n\geq 1$.
For simplicity,  we let $\omega:=\frac{\psi-\varphi}{\psi+\varphi\gamma\sqrt{\varphi}\tau_0}$. Then,  $\omega_n \geq \omega$ for all $n\geq1$ and thus
\be\label{beta-1}
\beta_{n+1}
= \beta_{n}(1+\omega_{n+1}\gamma \tau_{n})
&\geq& \beta_{n}
\Big(1+ \frac{\omega\gamma}{\sqrt{\varphi(1+\gamma\sqrt{\varphi}\tau_0)}}\frac{\sqrt{\psi}}{L\sqrt{\beta_{n}}}\Big)
%
= \beta_{n}+  c_0 \sqrt{\beta_{n}},
\ee
where $c_0 := \frac{\omega\gamma \sqrt{\psi}}{\sqrt{\varphi(1+\gamma\sqrt{\varphi}\tau_0)}L} > 0$.
By induction, it is easy to show that $\beta_n\geq c n^2$ for all $n\geq1$ with $c := \min(c_0^2/9, \beta_1)>0$.
This completes the proof.
\endproof

Now, we are ready to establish the promised $\cO(1/N^2)$ ergodic convergence rate. 

\begin{thm}\label{results-acc}
Let $\{(z_n,x_n,y_n,\omega_n, \beta_n, \tau_n)\}_{n\in\bN}$ be the sequence generated by Algorithm \ref{alg-acc} and $(\bar x, \bar y)\in\cS$ be any solution of \eqref{pd-prob}. For any integer $N\geq 1$, define
\ben
S_N=\sum_{n=1}^N \beta_{n}\tau_{n}, ~~
X_N=\frac{1}{S_N}\sum_{n=1}^N \beta_{n}\tau_{n}x_n \text{~~and~~}
Y_N=\frac{1}{S_N}\sum_{n=1}^N \beta_{n}\tau_{n}y_n.
\een
Then, there hold $\|z_{N+2}-\bar{x}\|=\cO(1/N)$ and $G(X_N,Y_N)=\cO(1/N^2)$.
\end{thm}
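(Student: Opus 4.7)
The plan is to mirror the analysis of Theorem~\ref{thm11} and Theorem~\ref{thm2}, but with variable weights that exploit the strong convexity of $g$. First I would derive the variable step size analog of Lemma~\ref{lem11}: apply Fact~\ref{fact_proj} to the $x$-update \eqref{GRPDA-acc-x}, but now using $g(y)\ge g(x_{n+1})+\langle\xi,y-x_{n+1}\rangle+\tfrac{\gamma}{2}\|y-x_{n+1}\|^2$ for $\xi\in\partial g(x_{n+1})$, which injects an extra $\gamma\tau_n\|\bar x-x_{n+1}\|^2/2$ on the right-hand side; apply Fact~\ref{fact_proj} to \eqref{GRPDA-acc-y} with step size $\beta_n\tau_n$; and finally combine these with the analog of \eqref{tem11}. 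The expected outcome is an inequality of the form
\[
\tau_n G(x_n,y_n) \;\le\; \langle x_{n+1}-z_{n+1},\bar x-x_{n+1}\rangle + \tfrac{1}{\beta_n}\langle y_n-y_{n-1},\bar y-y_n\rangle + \psi\langle x_n-z_{n+1},x_{n+1}-x_n\rangle + \tau_n\langle K^\top(y_n-y_{n-1}),x_n-x_{n+1}\rangle - \tfrac{\gamma\tau_n}{2}\|\bar x-x_{n+1}\|^2,
\]
where the primal-dual gap $G$ is now evaluated with the same weight $\tau_n$ as in the fixed-step proof.

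Next, I would apply identity \eqref{id} to the inner products and use the key relation $x_{n+1}=\tfrac{\psi}{\psi-1}z_{n+2}-\tfrac{1}{\psi-1}z_{n+1}$ together with \eqref{id2}, exactly as in \eqref{x-z-relation}. The aim is to reach a weighted one-step contraction of the form
\[
2\tau_n G(x_n,y_n) + A_{n+1} \;\le\; \rho_n A_n - B_n,
\]
where $A_n := \tfrac{\psi}{\psi-1}\|z_{n+1}-\bar x\|^2 + \tfrac{1}{\beta_{n-1}}\|y_{n-1}-\bar y\|^2$ and $\rho_n$ reflects the growth of $\beta_n$. Multiplying through by $\beta_n$ and using the update $\beta_n = \beta_{n-1}(1+\omega_n\gamma\tau_{n-1})$, the strong-convexity term $\gamma\tau_n\|x_{n+1}-\bar x\|^2$ is what absorbs the discrepancy $\beta_n-\beta_{n-1}$ in the $y$-block; the ratio $\varphi=(1+\psi)/\psi^2$ and the specific $\omega_n$ in \eqref{GRPDA-acc-omega} are precisely engineered so that the cross term $2\tau_n\langle K^\top(y_n-y_{n-1}),x_n-x_{n+1}\rangle$ is dominated by $\psi\|x_{n+1}-x_n\|^2$ and $\tfrac{1}{\beta_n}\|y_n-y_{n-1}\|^2$ after weighting by $\beta_n$, using $\tau_n\tau_{n-1}\beta_n L^2\le\psi$ from \eqref{GRPDA-acc-tau}. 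Concretely I would show that the weighted quantity $a_n := \beta_n A_n$ (or some mild adjustment of it) satisfies
\[
2\beta_n\tau_n G(x_n,y_n) + a_{n+1} \;\le\; a_n - b_n
\]
with $b_n\ge 0$. The main obstacle is matching coefficients in this weighted telescoping: one must verify that $\tfrac{\psi}{\psi-1}\beta_n + \gamma\tau_n\beta_n \ge \tfrac{\psi}{\psi-1}\beta_{n+1}$ (this controls the primal update, using $\varphi<\psi$ and the definition of $\omega_n$), and that $\beta_n\delta_n\le\beta_{n+1}$ (this controls the dual update), so that the increased step sizes do not destroy monotonicity of $\{a_n\}$.

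Once this weighted descent is in hand, summation from $n=1$ to $N$ yields $2\sum_{n=1}^N \beta_n\tau_n G(x_n,y_n)\le a_1$. By convexity of $G$ (separately in $x$ and $y$) and the definitions of $X_N,Y_N,S_N$,
\[
G(X_N,Y_N) \;\le\; \frac{1}{S_N}\sum_{n=1}^N \beta_n\tau_n G(x_n,y_n) \;\le\; \frac{a_1}{2S_N}.
\]
From Lemma~\ref{lem:parameters}, $\beta_n\ge cn^2$ and $\tau_n\ge c'/\sqrt{\beta_n}\ge c''/n$, so $\beta_n\tau_n\ge c'''\, n$, giving $S_N\ge \tilde c N^2$. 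This yields $G(X_N,Y_N)=\mathcal{O}(1/N^2)$. For the trajectory bound, the monotonicity $a_{N+1}\le a_1$ together with $a_{N+1}\ge \tfrac{\psi}{\psi-1}\beta_{N+1}\|z_{N+2}-\bar x\|^2$ and $\beta_{N+1}\ge c(N+1)^2$ immediately gives $\|z_{N+2}-\bar x\|^2=\mathcal{O}(1/N^2)$, hence $\|z_{N+2}-\bar x\|=\mathcal{O}(1/N)$. The technically delicate point throughout is the simultaneous bookkeeping of the three increasing/decreasing quantities $\beta_n$, $\tau_n$, $\omega_n$ so that both the primal and dual weighted discrepancies telescope with the strong-convexity slack $\gamma\tau_n\beta_n\|x_{n+1}-\bar x\|^2$ as the only available reservoir.
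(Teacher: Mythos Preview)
Your overall strategy matches the paper's, but there is a genuine gap in the details that would make your argument fail as written. The analog of Lemma~\ref{lem11} that you state is incorrect in the variable step size setting: the optimality inequality for $x_n$ (your ``analog of \eqref{tem11}'') is taken with step size $\tau_{n-1}$, so after rescaling to match the $\tau_n$-scale of the other two inequalities you pick up a factor $\delta_n=\tau_n/\tau_{n-1}$. The third inner product must therefore be $\psi\delta_n\langle x_n-z_{n+1},x_{n+1}-x_n\rangle$, not $\psi\langle\cdot,\cdot\rangle$. This is not cosmetic: after applying \eqref{id} and \eqref{x-z-relation} the coefficient in front of $\|x_{n+1}-z_{n+1}\|^2$ becomes $\psi\delta_n-1-\tfrac{1}{\psi}$, and the choice $\varphi=(1+\psi)/\psi^2$ is precisely what guarantees $\psi\delta_n-1-\tfrac{1}{\psi}\le\psi\varphi-1-\tfrac{1}{\psi}=0$ so that this term can be dropped. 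Likewise, the cross term is controlled by $\psi\delta_n\|x_{n+1}-x_n\|^2+\tfrac{1}{\beta_n}\|y_n-y_{n-1}\|^2$ via $\tau_n^2\beta_n L^2\le\psi\delta_n$; your stated bound with $\psi$ in place of $\psi\delta_n$ could fail since $\delta_n>1$ is allowed.

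A second, related issue is your Lyapunov function. The paper does \emph{not} telescope $a_n=\beta_n\big(\tfrac{\psi}{\psi-1}\|z_{n+1}-\bar x\|^2+\tfrac{1}{\beta_{n-1}}\|y_{n-1}-\bar y\|^2\big)$; instead it uses $A_n=\tfrac{\psi+2\gamma\tau_n}{2(\psi-1)}\|z_{n+1}-\bar x\|^2+\tfrac{1}{2\beta_n}\|y_{n-1}-\bar y\|^2$ and shows $\beta_{n+1}A_{n+1}+\beta_n\tau_n G(x_n,y_n)\le\beta_nA_n$. The $\tau_n$-dependent primal coefficient is essential: the telescoping hinges on the inequality $(1+2\gamma\tau_n)\tfrac{\psi}{\psi-1}\ge\tfrac{\beta_{n+1}}{\beta_n}\cdot\tfrac{\psi+2\gamma\tau_{n+1}}{\psi-1}$, which is where the definition of $\omega_{n+1}$ enters. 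Your proposed check $\tfrac{\psi}{\psi-1}\beta_n+\gamma\tau_n\beta_n\ge\tfrac{\psi}{\psi-1}\beta_{n+1}$ is in the right spirit but does not match what actually needs to be verified once the $\delta_n$ factor and the correct substitution via \eqref{x-z-relation} are carried out. Once these two points are fixed, your summation and use of Lemma~\ref{lem:parameters} to conclude $S_N=\cO(N^2)$ and $\beta_{N+1}=\cO(N^2)$ are exactly as in the paper.
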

\proof
Fix $n\geq 1$.
Since $g$ is $\gamma$-strongly convex,  an inequality stronger than the one stated in Fact \ref{fact_proj} can be used.
It follows from \eqref{GRPDA-acc-x} that $z_{n}-\tau_{n-1} K^\top y_{n-1} - x_{n} \in \tau_{n-1} \partial  g(x_{n})$, and thus
\begin{eqnarray}\label{ineq-subgrad}
  \langle x_{n}-z_{n} + \tau_{n-1} K^\top y_{n-1}, ~x-x_{n}\rangle \geq \tau_{n-1} \Big(g(x_{n})-g(x)+\frac{\gamma}{2}\|x_{n}-x\|^2\Big), \; \forall\, x.
\end{eqnarray}
By passing $n+1$ to $n$ and $\bar x$ to $x$ in \eqref{ineq-subgrad}, we obtain
\begin{eqnarray}\label{thm4.1-ieq1}
  \langle x_{n+1}-z_{n+1} + \tau_n K^\top y_{n}, ~~ \bar{x}-x_{n+1}\rangle \geq \tau_n \Big(g(x_{n+1})-g(\bar{x})+\frac{\gamma}{2}\|x_{n+1}-\bar{x}\|^2\Big).
\end{eqnarray}
Similarly, by passing $x_{n+1}$ to $x$ in \eqref{ineq-subgrad} and multiplying both sides by $\delta_n = {\tau_n \over \tau_{n-1}}$, we obtain
\begin{eqnarray}\label{thm4.1-ieq2}
  \langle \delta_n(x_{n}-z_{n}) + \tau_{n} K^\top y_{n-1}, ~ x_{n+1}-x_{n}\rangle &\geq& \tau_{n} \Big(g(x_{n})-g(x_{n+1})+\frac{\gamma}{2}\|x_{n+1}-x_{n}\|^2\Big).
\end{eqnarray}
Similar to \eqref{temp1_001}, it follows from \eqref{GRPDA-acc-y} that
\begin{eqnarray}\label{thm4.1-ieq3}
\Big\langle {1\over \beta_n} (y_{n}-y_{n-1}) - \tau_n K x_{n}, ~\bar{y}-y_{n}\Big\rangle \geq \tau_n \big(f^*(y_{n})-f^*(\bar{y})\big)
\end{eqnarray}
From \eqref{GRPDA-acc-z}, it is easy to derive $x_n-z_n = \psi(x_n - z_{n+1})$.
Then, by adding \eqref{thm4.1-ieq1}-\eqref{thm4.1-ieq3} and using similar arguments as in Lemma \ref{lem11}, we obtain
\be\label{stronger-c}
\tau_n G(x_n,y_n)
&\leq&\langle x_{n+1}-z_{n+1}, \bar{x}-x_{n+1}\rangle+
\frac{1}{\beta_{n}} \big\langle y_{n}-y_{n-1}, \bar{y}-y_{n}\big\rangle
+\psi\delta_n \big\langle x_{n}-z_{n+1}, x_{n+1}- x_n\big\rangle \nonumber\\
&&+ \tau_n \langle K^\top (y_{n}- y_{n-1}), x_n-x_{n+1}\rangle - \gamma\tau_n  \|x_{n+1}-\bar{x}\|^2.
\ee
By using \eqref{id} and Cauchy-Schwartz inequality and combining terms, \eqref{stronger-c} implies
\ben
&&(1+2\gamma\tau_n)\|x_{n+1}-\bar{x}\|^2+\frac{1}{\beta_{n}}\|y_{n}-\bar{y}\|^2+ 2 \tau_n G(x_n,y_n)\nonumber\\
&\leq&\|z_{n+1}-\bar{x}\|^2+\frac{1}{\beta_{n}}\|y_{n-1}-\bar{y}\|^2  -\psi\delta_n\|z_{n+1}-x_n\|^2+(\psi\delta_n-1)\|x_{n+1}-z_{n+1}\|^2 \nonumber\\
&& -\psi\delta_n\|x_{n+1}-x_n\|^2-\frac{1}{\beta_{n}}\|y_{n}-y_{n-1}\|^2  + 2\tau_n\|K^\top (y_{n}- y_{n-1})\|\|x_{n+1}-x_n\|.
\een
Plugging in (\ref{x-z-relation}) and recalling $L = \|K\| = \|K^\top\|$, we obtain
\be\label{ineq12}
&&(1+2\gamma\tau_n)\frac{\psi}{\psi-1}\|z_{n+2}-\bar{x}\|^2
+\frac{1}{\beta_{n}}  \|y_{n}-\bar{y}\|^2+ 2 \tau_n G(x_n,y_n)\nonumber\\
&\leq&\frac{\psi+2\gamma\tau_n}{\psi-1}\|z_{n+1}-\bar{x}\|^2+\frac{1}{\beta_{n}}\|y_{n-1}-\bar{y}\|^2 +\Big(\psi\delta_n-1-\frac{1+2\gamma\tau_n}{\psi}\Big)\|x_{n+1}-z_{n+1}\|^2\nonumber\\
&&-\psi\delta_{n}\|z_{n+1}-x_n\|^2
-\psi\delta_{n}\|x_{n+1}-x_n\|^2-\frac{1}{\beta_{n}}\|y_{n}-y_{n-1}\|^2   \nonumber\\
&&+ 2\tau_n L \|y_{n}-y_{n-1}\|\|x_{n+1}-x_n\|.
\ee
It follows from $\tau_n\leq\frac{\sqrt{\psi\delta_n}}{L\sqrt{\beta_n}}$ (the second inequality in \eqref{tau_n_properties}) that
\ben
2\tau_n L \|y_{n}-y_{n-1}\|\|x_{n+1}-x_n\| 
&\leq&\psi\delta_{n}\|x_{n+1}-x_n\|^2+ \frac{1}{\beta_{n}}\|y_{n}-y_{n-1}\|^2.
\een
Furthermore,  $\psi\delta_n-1-\frac{1+2\gamma\tau_n}{\psi} \leq \psi \varphi -1- \frac{1}{\psi} =0$
since $\delta_n\leq\varphi$.
Therefore, (\ref{ineq12}) implies
\be\label{ineq13}
&&(1+{2}\gamma\tau_n)\frac{\psi}{\psi-1}\|z_{n+2}-\bar{x}\|^2
+\frac{1}{\beta_{n}}  \|y_{n}-\bar{y}\|^2+ 2 \tau_n G(x_n,y_n)\nonumber\\
&\leq&\frac{\psi+{2}\gamma\tau_n}{\psi-1}\|z_{n+1}-\bar{x}\|^2+\frac{1}{\beta_{n}}\|y_{n-1}-\bar{y}\|^2. 
\ee
Since $(1+{2}\gamma\tau_n)\frac{\psi}{\psi-1} = \frac{\psi(1+{2}\gamma\tau_n)}{\psi+{2}\gamma\tau_{n+1}} \frac{\psi+{2}\gamma\tau_{n+1}}{\psi-1}$ and
\ben
  \frac{\psi(1+{2}\gamma\tau_n)}{\psi+{2}\gamma\tau_{n+1}}
&\geq& \frac{\psi(1+{2}\gamma\tau_n)}{\psi+{2}\gamma\varphi\tau_{n}}
= 1+\frac{\psi-\varphi}{{\psi\over 2}+\gamma\varphi\tau_n}\gamma\tau_n
\geq 1+\frac{\psi-\varphi}{\psi+ \gamma\varphi\tau_n}\gamma\tau_n
= 1+\omega_{n+1}\gamma\tau_n,
\een
where the first inequality follows from $\tau_{n+1}\leq\varphi\tau_{n}$ and the second is due to $\psi^3 - \psi - 1>0$, it follows that
\be\label{key-ineq}
(1+{2}\gamma\tau_n)\frac{\psi}{\psi-1}
%
\geq(1+\omega_{n+1}\gamma\tau_n)\frac{\psi+2\gamma\tau_{n+1}}{\psi-1}
=\frac{\beta_{n+1}}{\beta_n}\frac{\psi+2\gamma\tau_{n+1}}{\psi-1}.
\ee
Define $A_{n}:=\frac{\psi+2\gamma\tau_n}{2(\psi-1)}\|z_{n+1}-\bar{x}\|^2+ \frac{1}{2\beta_{n}}\|y_{n-1}-\bar{y}\|^2$.
Combining (\ref{ineq13}) and \eqref{key-ineq}, we deduce
\be\label{key-ineq2}
\beta_{n+1}A_{n+1}+\beta_{n}\tau_{n} G(x_n,y_n)
\leq \beta_{n}A_{n}.
\ee
By summing \eqref{key-ineq2} for $n = 1, \ldots, N$, we obtain
\be\label{beta-A}
\beta_{N+1}A_{N+1}+\sum_{n=1}^N\beta_{n}\tau_{n} G(x_n,y_n)
\leq \beta_{1}A_{1}.
\ee
The convexity of $G(x,y)$, (\ref{beta-A}) and the definition of $A_n$ imply that
\be
G(X_N,Y_N)&\leq&  {1\over  S_N}\sum_{n=1}^N\beta_{n}\tau_{n} G(x_n,y_n) \leq { \beta_{1}A_{1} \over S_N},\label{G-XY}\\
\|z_{N+2}-\bar{x}\|^2&\leq& \frac{2(\psi-1)}{\psi + 2 \gamma \tau_{N+1}}
{\beta_{1}A_{1} \over \beta_{N+1}} \leq {2\beta_{1}A_{1} \over \beta_{N+1}}.
\label{z-N}
\ee
From Lemma \ref{lem:parameters}, there exists $c>0$ such that $\beta_n \geq c n^2$ for all $n\geq 1$.
Then,  (\ref{z-N}) gives
\ben
\|z_{N+2}-\bar{x}\|&\leq&\frac{\sqrt{2\beta_1A_1/c}}{N+1}.
\een
It follows from (\ref{beta-1}) that $\beta_{n+1}-\beta_{n}\geq
c_0 \sqrt{c}n$, which together with \eqref{GRPDA-acc-beta} and $\omega_{n+1}<1$ implies
\[\nonumber
\beta_n\tau_n=\frac{\beta_{n+1}-\beta_{n}}{\omega_{n+1}\gamma} \geq \frac{c_0 \sqrt{c}n}{\gamma}.
\]
As a result, $S_N =\sum_{n=1}^N \beta_{n}\tau_{n}=\cO(N^2)$, and thus $G(X_N,Y_N)=\cO(1/N^2)$ follows from (\ref{G-XY}).
\endproof

\section{GRPDA for two special cases}
\label{sec_special_case}
This section is devoted to GRPDA for two special cases of the structured convex optimization problem (\ref{primal}), which are specified in the following assumption.
\begin{asmp}\label{asmp-f}
Let $b\in\R^p$ be given.
Assume that either $f(\cdot) = \iota_{\{b\}}(\cdot)$, the indicator function of the singleton $\{b\}$, or
$f(\cdot) = {1\over 2}\|\cdot-b\|^2$.
\end{asmp}

The two cases specified in Assumption \ref{asmp-f} correspond respectively to
\ben
\min_{x\in \R^q} \{g(x): Kx = b \} \text{~~and~~} \min_{x\in \R^q} \frac 1 2 \|Kx-b\|^2 + g(x),
\een
which are abundant in practice. For example, linear inverse problems, which include many signal and image processing applications, usually enforce data fitting via $f(Kx)$, which takes the form $\frac 1 2\|Kx-b\|^2$ for noisy data and $\iota_{\{b\}}(Kx)$, or equivalently $Kx=b$, for ideal noiseless data.
A well known application is compressive signal/image sensing \cite{Dono2006Compressed,Needell2013compressed}, which recovers sparse or compressible signals from a small number of linear measurements via solving the basis pursuit and/or the LASSO problems \cite{Tibshirani1996Regression,Chen2001Atomic}.

The aim of this section is to propose a relaxed GRPDA when $f$ satisfies Assumption \ref{asmp-f}, which is a modification of Algorithm \ref{alg-basic} in two aspects: (i) extending the permitted range of $\psi$ from $(1,\phi]$ to $(1,2]$, and (ii) introducing a relaxation step.
In the rest of this section, we always assume that $f$ satisfies Assumption \ref{asmp-f}.
Then, for any $u\in\R^p$ and $\sigma>0$, it is easy to show that $\prox_{\sigma f^*}(u) =  \eta u+\varrho b$, where
\begin{eqnarray}
\label{def:eta}
\begin{array}
  {l}
%
(\eta,\varrho) := (\eta(\sigma),\varrho(\sigma)) =
\left\{
  \begin{array}{ll}
    (1, -\sigma) & \hbox{\quad if $f(\cdot) = \iota_{\{b\}}(\cdot)$,} \smallskip \\
    {(1, \sigma) \over 1+\sigma} & \hbox{\quad if $f(\cdot) = {1\over 2}\|\cdot-b\|^2$.}
  \end{array}
\right.
\end{array}
\end{eqnarray}
In both cases, it holds that $\eta = \eta(\sigma) \in (0,1]$ for any $\sigma > 0$.

\subsection{GRPDA as fixed point iteration}
The iterative scheme GRPDA \eqref{GRPDA} applied to \eqref{primal} 
appears as
\be\label{GRPDA-ls}
\left\{\ba{rcl}y_{n-1}&=&\eta(y_{n-2}+\sigma K x_{n-1})+\varrho b, \smallskip\\
z_{n}&=&\frac{\psi-1}{\psi} x_{n-1} +{1\over\psi} z_{n-1}, \smallskip\\
x_{n}&=&\prox_{\tau g}(z_{n}-\tau K^\top y_{n-1}),
\ea\right.
\ee
where $\eta$ and $\varrho$ are determined by \eqref{def:eta}.
Note that, since $f$ satisfies Assumption \ref{asmp-f},
the schemes \eqref{GRPDA-ls} and \eqref{GRPDA} are equivalent in the sense that they, if initialized properly, generalize exactly the same sequence of iterates.
Our choice of \eqref{GRPDA-ls}, which states explicitly the computing formulas of $(y_{n-1},z_n,x_n)$, instead of $(z_n,x_n,y_n)$ as in \eqref{GRPDA}, is mainly for convenience of analysis.

%
To present our relaxed GRPDA, we first represent \eqref{GRPDA-ls} as a fixed point iterative scheme, which is summarized below.

\begin{lem}
Let $\xi_n=(z_n; \, x_n; \, y_{n-1})$. Then, the iterative scheme \eqref{GRPDA-ls} can be represented as the fixed point iteration
$\xi_{n}=\mathbb{S}\circ \mathbb{T}(\xi_{n-1})$, where
\begin{eqnarray}\label{def:FR}
\left\{
  \begin{array}{ccl}
  \mathbb{T}(\cdot) = T(\cdot) + \vartheta_1, & & \vartheta_1=(0; \, -\varrho\tau K^\top b; \, 0), \medskip \\
  \mathbb{S}(\cdot) = S(\cdot)+\vartheta_2, & & \vartheta_2=(0; \, 0^\top; \, \varrho b), \medskip \\
  S=\left[\ba{ccccc}I&&&0&0\\0&&&\prox_{\tau g}&0\\0&&&0&I\ea\right] & \text{~~and~~}& T=\left[\ba{ccccc}\frac{1}{\psi}I&~~&(1-\frac{1}{\psi})I&~~&0\\ \frac{1}{\psi}I&~~&(1-\frac{1}{\psi})I-\eta\tau\sigma K^\top K&~~&-\eta\tau K^\top \\ 0&~~&\eta\sigma K&~~&\eta I\ea\right].
  \end{array}
  \right.
\end{eqnarray}
\end{lem}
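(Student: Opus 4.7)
The proof is a direct verification by block-wise matrix-vector multiplication, so the plan is essentially to expand $\mathbb{S} \circ \mathbb{T}(\xi_{n-1})$ and show it coincides with $(z_n;\, x_n;\, y_{n-1})$ as produced by \eqref{GRPDA-ls}. There are no inequalities to chase; the only question is whether the affine shifts $\vartheta_1$ and $\vartheta_2$ and the position of $\prox_{\tau g}$ inside $S$ are placed consistently with the order of updates. I expect no real obstacle, only the bookkeeping of making sure every $\eta$, $\sigma$, $\tau$, and $\varrho$ ends up in the right block.

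The plan is to first compute $\mathbb{T}(\xi_{n-1}) = T\xi_{n-1} + \vartheta_1$ row by row. The first block row of $T$ reproduces exactly the convex combination $z_n = \tfrac{1}{\psi} z_{n-1} + \tfrac{\psi-1}{\psi} x_{n-1}$ from \eqref{GRPDA-ls}. The third block row of $T$ returns $\eta \sigma K x_{n-1} + \eta y_{n-2} = \eta(y_{n-2} + \sigma K x_{n-1})$, which is precisely the pre-$\varrho b$ part of $y_{n-1}$. The second block row of $T$ combines the first row's output with a linear-in-$K^\top$ correction, yielding
\[
\tfrac{1}{\psi} z_{n-1} + \bigl(1-\tfrac{1}{\psi}\bigr) x_{n-1} - \eta\tau K^\top\bigl(\sigma K x_{n-1} + y_{n-2}\bigr)
= z_n - \eta\tau K^\top\bigl(y_{n-2}+\sigma K x_{n-1}\bigr).
\]
Adding the shift $-\varrho\tau K^\top b$ from $\vartheta_1$ to this second block gives $z_n - \tau K^\top\bigl[\eta(y_{n-2}+\sigma K x_{n-1}) + \varrho b\bigr] = z_n - \tau K^\top y_{n-1}$, i.e., the argument of $\prox_{\tau g}$ that defines $x_n$ in \eqref{GRPDA-ls}.

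Next I apply $\mathbb{S}$ to the output of $\mathbb{T}(\xi_{n-1})$. The block-diagonal structure of $S$ keeps the first coordinate untouched (so it remains $z_n$), applies $\prox_{\tau g}$ to the second coordinate $z_n - \tau K^\top y_{n-1}$ (producing exactly $x_n$), and applies the identity to the third coordinate $\eta(y_{n-2}+\sigma K x_{n-1})$. Finally, the shift $\vartheta_2 = (0;\,0;\,\varrho b)$ turns this last coordinate into $\eta(y_{n-2}+\sigma K x_{n-1}) + \varrho b = y_{n-1}$, matching the first line of \eqref{GRPDA-ls}. Assembling the three blocks yields $(z_n;\, x_n;\, y_{n-1}) = \xi_n$, which establishes the claimed identity $\xi_n = \mathbb{S} \circ \mathbb{T}(\xi_{n-1})$.

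The only delicate point is remembering that the $\varrho b$ term from the proximal formula $\prox_{\sigma f^*}(u) = \eta u + \varrho b$ in \eqref{def:eta} must be split between $\vartheta_1$ (because it contributes $-\varrho\tau K^\top b$ inside the argument of $\prox_{\tau g}$) and $\vartheta_2$ (because it also appears explicitly in $y_{n-1}$). Once this split is recognized, the verification reduces to routine block-matrix arithmetic, and the lemma follows.
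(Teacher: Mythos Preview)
Your verification is correct and is exactly the ``direct verification from \eqref{GRPDA-ls}'' that the paper invokes as its proof; you have simply written out the block-by-block computation that the paper leaves to the reader. The splitting of the $\varrho b$ term between $\vartheta_1$ and $\vartheta_2$ is handled correctly.
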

\begin{proof}
  Direct verification from \eqref{GRPDA-ls}.
\end{proof}


Let $\bC$ be the set of complex numbers and $\Lambda(M)$ be the set of eigenvalues of a matrix $M$.
The following theorem is our key  to extend $\psi$ from $(1,\phi]$ to $(1,2]$ and to take a relaxation step.
\begin{thm}\label{lem-R}
Let $\tau, \sigma >0$ and $\psi \in (1, 2]$ be such that $\tau\sigma L^2 < \psi$ and $\eta \in (0,1]$.
Then, the matrix $T$, and thus the affine mapping $\mathbb{T}$, defined in (\ref{def:FR}) is firmly nonexpansive.
\end{thm}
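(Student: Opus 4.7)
The plan is to reduce the claim to a spectral one via the SVD of $K$, and then verify the spectral inclusion by a direct characteristic-polynomial computation. Write $K = U\Sigma V^\top$ with singular values $s_1,\dots,s_r \in [0,L]$. After the unitary change of variables $(z,x,y) \mapsto (V^\top z, V^\top x, U^\top y)$, the linear part $T$ is block-diagonal with blocks
\[
T_s = \begin{pmatrix} 1/\psi & 1-1/\psi & 0 \\ 1/\psi & 1-1/\psi-\eta\tau\sigma s^2 & -\eta\tau s \\ 0 & \eta\sigma s & \eta \end{pmatrix}
\]
for each singular value $s\in\{s_1,\dots,s_r\}$, together with scalar blocks equal to $\eta$ coming from $\ker K^\top$ and identity blocks from $\ker K$ in the $(z,x)$-plane. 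So it suffices to show that every $T_s$, $s\in[0,L]$, has spectrum in the closed disc $\mathcal{D} := \{\lambda\in\bC : |\lambda - 1/2|\leq 1/2\}$.

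Abbreviating $c := \eta\tau\sigma s^2$, I would expand $\det(\lambda I - T_s) = \lambda\bigl[\lambda^2 - (1+\eta-c)\lambda + (\eta - c/\psi)\bigr]$, so the nonzero eigenvalues are $\lambda_\pm = \tfrac12\bigl((1+\eta-c)\pm\sqrt{\Delta}\bigr)$ with $\Delta := (1+\eta-c)^2 - 4(\eta - c/\psi)$. I would then split into two cases. When $\Delta \geq 0$, Vieta together with $\tau\sigma L^2 < \psi$ and $\eta\in(0,1]$ gives $\lambda_+\lambda_- = \eta - c/\psi > 0$, $\lambda_+ + \lambda_- = 1+\eta-c \in (0,2)$, and $(1-\lambda_+)(1-\lambda_-) = (1-1/\psi)c \geq 0$, which forces $\lambda_\pm \in [0,1] \subset \mathcal{D}$. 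When $\Delta < 0$, the key algebraic identity is
\[
|\lambda_\pm - 1/2|^2 \;=\; \tfrac14\bigl((\eta-c)^2 - \Delta\bigr) \;=\; \tfrac14\bigl(2\eta - 1 + 2c(1 - 2/\psi)\bigr),
\]
and since $\psi \leq 2$ forces $1 - 2/\psi \leq 0$, the right-hand side is at most $(2\eta-1)/4 \leq 1/4$ (using $\eta \leq 1$). The scalar blocks contribute only the eigenvalues $\eta\in(0,1]$ and $1$, both in $\mathcal{D}$.

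It then remains to upgrade the spectral inclusion $\Lambda(T) \subset \mathcal{D}$ to the operator statement that $T$ is firmly nonexpansive, i.e.\ that $2T - I$ is a contraction in some inner product. Boundary eigenvalues arise only as $\lambda = 1$ inside the $s=0$ block (whose eigenspace, an easy check shows, is already one-dimensional) and as simple complex conjugate pairs in the case $\eta = 1$, so no non-trivial Jordan block for $2T-I$ lies on the unit circle. A standard Lyapunov construction then produces $P \succ 0$ with $(2T-I)^\top P (2T-I) \preceq P$, which is precisely firm nonexpansiveness of $T$ in the $P$-inner product. The affine offset $\vartheta_1$ is inert for this property, so $\mathbb{T}$ is firmly nonexpansive as well.

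The main obstacle is the complex-eigenvalue identity displayed above: it is the single algebraic fact that simultaneously consumes both hypotheses $\psi \leq 2$ and $\eta \leq 1$, and it explains precisely why the admissible range of $\psi$ can be widened from $(1,\phi]$ to $(1,2]$ in this special setting. A subsidiary technicality is verifying non-defectiveness of boundary eigenvalues, which is immediate once one notes that $\lambda_\pm$ are simple whenever $\Delta \neq 0$ and that the repeated case $\Delta = 0$ never places eigenvalues on $\partial\mathcal{D}$ under the stated hypotheses.
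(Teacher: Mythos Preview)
Your spectral computation is correct and, via the SVD block--diagonalisation, arguably cleaner than the paper's direct eigen-equation approach: both establish exactly the same fact, namely $\Lambda(2T-I)\subset\{|\lambda|\le 1\}$, and your identification of the complex case identity
\[
|\lambda_\pm-\tfrac12|^2=\tfrac14\bigl(2\eta-1+2c(1-2/\psi)\bigr)
\]
as the place where both hypotheses $\psi\le2$ and $\eta\le1$ enter is precisely the content of the paper's inequality~(\ref{a-b}).

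However, the final paragraph does \emph{not} prove the theorem as stated. ``Firmly nonexpansive'' in this paper means with respect to the ambient Euclidean inner product, and that is also what the downstream Theorem~\ref{thm-rGRPDA} needs, since there $\mathbb{T}$ must be composed with $\mathbb{S}$ (built from $\prox_{\tau g}$), which is firmly nonexpansive in the Euclidean norm. Your Lyapunov construction only yields firm nonexpansiveness in a $P$-weighted norm, and this is unavoidable: the Euclidean statement is simply false. For instance, with $\eta=1$, $\psi=2$, $\tau=10^{-2}$, $\sigma=10^{2}$ and a singular value $s=1$ (so $\tau\sigma s^2=1<\psi$), one has
\[
2T_s-I=\begin{pmatrix}0&1&0\\1&-2&-0.02\\0&200&1\end{pmatrix},\qquad \|(2T_s-I)e_2\|\approx 200,
\]
so $2T_s-I$ is not Euclidean nonexpansive and $T_s$ is not Euclidean firmly nonexpansive. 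You have in fact correctly diagnosed a genuine gap in the paper: its proof asserts that nonexpansiveness of $2T-I$ ``is clearly equivalent to $|\lambda|\le1$ for any $\lambda\in\Lambda(2T-I)$'', which fails for non-normal matrices. What would actually be needed, and what neither your argument nor the paper supplies, is a specific weighting $P$ (depending on $\tau,\sigma$) in which \emph{both} $\mathbb{T}$ and $\mathbb{S}$ are firmly nonexpansive, so that the composition argument of Theorem~\ref{thm-rGRPDA} goes through.
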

\proof
It follows from, e.g., \cite[Proposition 4.2]{Bauschke2011book} that $T$ is firmly nonexpansive if and only if $2T-I$ is nonexpansive,
which is clearly equivalent to $|\lambda|\leq 1$ for any $\lambda \in \Lambda(2T-I)$.
Since $\eta \in (0,1]$, we have $2\eta - 1 \in (-1,1]$.
Therefore, to guarantee nonexpansiveness of $2T-I$, it is sufficient to show that $|\lambda|\leq 1$ for any $\lambda \in \Lambda(2T-I) \setminus \{\pm 1, 2\eta - 1\}$. The reason that we exclude $\pm 1$ and $2\eta-1$ from the spectrum $\Lambda(2T-I)$ will be clear below.

Let $\lambda \in \Lambda(2T-I) \setminus \{\pm 1, 2\eta - 1\}$ and $\xi= (z; \, x; \, y) \in \R^q \times \R^q \times \R^p$ be any eigen-pair of $2T-I$, i.e., $(2T-I)\xi=\lambda\xi$  and $\lambda \notin \{\pm 1, 2\eta - 1\}$. By the definition of $T$, $(2T-I)\xi=\lambda\xi$ appears as
\be
\Big(\frac{2}{\psi}-1\Big)z +\Big(2-\frac{2}{\psi}\Big)x=\lambda z,  \label{S1}\\
\frac{2}{\psi}z+ \Big(1-\frac{2}{\psi}\Big)x-2\eta\tau\sigma K^\top Kx-2\eta\tau K^\top y=\lambda x, \label{S2} \\
 2\eta\sigma Kx+ (2\eta-1)y=\lambda y.\label{S3}
\ee
Combining \eqref{S1} and \eqref{S2}, we obtain
\be\label{S33}
(\lambda+1)(z-x)-2\eta\tau\sigma K^\top Kx-2\eta\tau K^\top y=0.
\ee
%
%
%
%
%
First, we show that $KK^\top y\neq0$ by contradiction. Assume that $KK^\top y = 0$.
Then, it holds that $KK^\top Kx=0$ from (\ref{S3}), combining which with \eqref{S33} gives $(\lambda+1)K(z-x)=0$.
Since $\lambda\neq-1$, there must hold $Kz=Kx$. Then, multiplying both sides of (\ref{S2}) by $K$ shows that $(1-\lambda)Kx = 0$, which further implies that $Kx=0$ since $\lambda\neq1$. As a result, it follows from (\ref{S3}) and $\lambda\neq2\eta-1$ that there must hold $y=0$.
Thus, along with $Kx=0$ and $\lambda \neq -1$, (\ref{S33})  would lead to $x=z$.
Since $\lambda \neq 1$, it then follows from \eqref{S1} that $x=z=0$, which
contradicts to the fact that $\xi$ is an eigenvector and has to be nonzero.
Therefore, there must hold $KK^\top y\neq0$.

Since $KK^\top y\neq0$, we thus have $y\neq0$. For simplicity, we let
$\lambda_1:=\frac{\lambda+1}{2}$. Since $\lambda \notin \{\pm 1, 2\eta - 1\}$, we have
$\lambda_1 \notin \{0, 1, \eta\}$.
Then, (\ref{S1}) and (\ref{S3}) can be restated, respectively, as
\be
\Big(1-\frac{1}{\psi}\Big)x=\Big(\lambda_1-\frac{1}{\psi}\Big)z
\text{~~and~~}
\sigma Kx=\frac{\lambda_1-\eta}{\eta} y.\label{S45}
\ee
Multiplying both sides of (\ref{S3}) by $\tau K^\top $, adding to (\ref{S2}) and using \eqref{S1}, we obtain
$$(\lambda+1)(z-x-\tau K^\top y) = 0.$$
Since $\lambda\neq -1$,  we thus obtain $z-x=\tau K^\top y$, wihch together with (\ref{S45})  gives
\ben
    -\Big(\lambda_1-\frac{1}{\psi}\Big)\frac{\lambda_1-\eta}{\eta} y
&=& -\Big(\lambda_1-\frac{1}{\psi}\Big)\sigma Kx\nonumber\\
&=& \Big(\lambda_1-\frac{1}{\psi}\Big)\sigma K(\tau K^\top y-z)\nonumber\\
&=& \Big(\lambda_1-\frac{1}{\psi}\Big)\tau\sigma KK^\top y -  \sigma \Big(\lambda_1-\frac{1}{\psi}\Big) Kz \nonumber\\
&=& \Big(\lambda_1-\frac{1}{\psi}\Big)\tau\sigma KK^\top y- \sigma  \Big(1-\frac{1}{\psi}\Big) Kx \nonumber\\
&=& \Big(\lambda_1-\frac{1}{\psi}\Big)\tau\sigma KK^\top y - \Big(1-\frac{1}{\psi}\Big) \frac{\lambda_1 - \eta}{\eta} y.
\een
By combining the two terms of $y$ on both sides, we obtain
\ben
(\lambda_1-1)\frac{\eta-\lambda_1}{\eta} y&=& \Big(\lambda_1-\frac{1}{\psi}\Big) \tau\sigma KK^\top y,
\een
which simplifies to
\be\label{eig-y}
\frac{\eta-\lambda_1}{\eta} y=\frac{(\overline{\lambda_1}-1)\Big(\lambda_1-\frac{1}{\psi}\Big)}{|\lambda_1-1|^2}\tau\sigma KK^\top y. \label{S6}
\ee
Here  $\overline{\lambda_1}$ denotes the complex conjugate of $\lambda_1$.
Recall that $y\neq 0$. Then \eqref{eig-y} implies that $y$ is an eigenvector of $KK^\top$.
Since $KK^\top $ is real symmetric, positive semidefinite and $\|KK^\top\|=L^2$, its eigenvalues must be real and lie in $[0, L^2]$.
It is then implied by $KK^Ty\neq 0$ and $0<\tau\sigma L^2 < \psi$ that there exists $\psi_1\in(0,\psi)$ such that
$\tau\sigma KK^\top y=\psi_1y$.

Let $s, t\in\R$ and $\lambda = s + t \sqrt{-1}\in\bC$. Then $\lambda_1=\frac{1+s}{2} + \frac{t}{2}\sqrt{-1}$ and it is elementary to deduce from (\ref{S6})  that
\be
\frac{4\eta-2-2s}{\eta}&=&\frac{(s+1-\frac{2}{\psi})(s-1)+t^2}{|\lambda_1- 1|^2}\psi_1,\label{S7} \smallskip \\
\frac{t}{\eta}&=&\frac{t\Big(1-\frac{1}{\psi}\Big) \psi_1 }{|\lambda_1 -1|^2}.\label{S8}
\ee
We split the discussions in two cases, (i) $t=0$, and (ii) $t\neq 0$.
\bi
\item Case (i). If $t=0$, we have $s^2-2\eta(1-\psi_1)s-1+2\eta \varsigma=0$ from (\ref{S7}), where
$\varsigma := 1+\psi_1-\frac{2\psi_1}{\psi}$.
Let $u := \eta(1-\psi_1)$ and $v :=\eta^2(1-\psi_1)^2 -2\eta\varsigma + 1$. Then, we have $\lambda=s = u \pm \sqrt{v}$.
Since $s\in\bR$, there must hold $\eta^2(1-\psi_1)^2 -2\eta\varsigma  + 1 \geq0$, which, by
further considering $\eta \leq 1$, implies that
\ben
0<\eta\leq \left\{\ba{ll}\min\Big\{1,~~\frac{\varsigma -\sqrt{\varsigma^2-(1-\psi_1)^2}}{(1-\psi_1)^2}\Big\},&~~\psi_1\neq1, \medskip \\
\min\Big\{1,~~\frac{1}{4(1-\frac{1}{\psi})}\Big\},&~~\psi_1=1.\ea\right.
\een
By $\psi_1 \in (0, \psi)$ and $\psi>1$, it is elementary to show that $\sqrt{v} < 1 - u$ and $\sqrt{v} < 1 + u$.
Consequently, we have $\lambda \in (-1,1)$.

\item Case (ii). If $t\neq0$, we have $|\lambda_1-1|^2=\eta\psi_1\Big(1-\frac{1}{\psi}\Big)$ from (\ref{S8}). Then, (\ref{S7}) implies
\[\nonumber
(4\eta - 2 - 2s) \Big(1-{1\over \psi}\Big) = \Big(s + 1 - {2\over \psi}\Big) (s-1) + t^2,
\]
from which we obtain
\be\label{a-b}
\Big(s+1-\frac{2}{\psi}\Big)^2+t^2
&=& (4\eta - 2 - 2s) \Big(1-{1\over \psi}\Big) + \Big(s+1-{2\over \psi}\Big) \Big(2-{2\over \psi}\Big) \nonumber \\
&=& 4\Big(1-{1\over \psi}\Big) \Big(\eta -{1\over \psi}\Big)\nonumber \\
&=&\Big(1-\frac{2}{\psi}\Big)^2+4\eta\Big(1-\frac{1}{\psi}\Big)-1\nonumber\\
&\leq&\Big(1-\frac{2}{\psi}\Big)^2+4\Big(1-\frac{1}{\psi}\Big)-1\nonumber\\
&=&4\Big(1-\frac{1}{\psi}\Big)^2,
\ee
where the inequality follows from $\eta\in (0,1]$ and $\psi > 1$.
%
Since $\psi\in(1,2]$, a simple geometric argument based on \eqref{a-b} shows that $|\lambda| = \sqrt{s^2+t^2} \leq 1$.
\ei
Combining Cases (i) and (ii), we have shown that $|\lambda|\leq 1$  for any $\tau, \sigma > 0$ and $\psi\in (1,2]$ such that
$\tau\sigma L^2 < \psi$. This completes the proof.
\endproof

\subsection{Relaxed GRPDA}

Now, we are ready to present a relaxed GRPDA when $f$ satisfies Assumption \ref{asmp-f}.
The relaxation parameters lie in $(0,2/3)$ and the parameter $\psi$, which is restricted to $(1, \phi]$ in Algorithm \ref{alg-basic}, can now be extended to $(1, 2]$. The relaxed GRPDA is summarized below.

\vskip5mm
\hrule\vskip2mm
\begin{algo}
[Relaxed GRPDA]\label{algo-relax}
{~}\vskip 1pt {\rm
\begin{description}
\item[{\em Step 0.}]  Let $\tau, \sigma > 0$ and $\psi\in (1,2]$ be such that $\tau\sigma L^2 < \psi$, $(\eta,\varrho) = (\eta(\sigma),\varrho(\sigma))$ be defined in \eqref{def:eta}, and
$\{\rho_n\}_{n\in \bN} \subseteq (0,\frac{3}{2})$ be such that $\sum_{n\in\mathbb{N}}\rho_n(1-\frac{2\rho_n}{3})=+\infty$.
Choose $x_0\in \R^q$ and $y_{-1}\in \R^p$. Set $z_0 = x_0$ and $n=1$.
\item[{\em Step 1.}] Compute
\ben
\left\{\ba{rcl}
 \widetilde{y}_{n-1} &=&\eta(y_{n-2}+\sigma K x_{n-1})+\varrho b, \smallskip \\
 \widetilde{z}_{n}&=& \frac{\psi-1}{\psi}x_{n-1} + {1\over \psi}z_{n-1}, \smallskip \\
 \widetilde{x}_{n}&=&\prox_{\tau g}(\widetilde{z}_{n}-\tau K^\top \widetilde{y}_{n-1}).
 \ea\right.\een
\item[{\em Step 2.}] Take a relaxation step
\ben
\left\{\ba{rcl}
 y_{n-1}&=& y_{n-2}  + \rho_n (\widetilde{y}_{n-1} - y_{n-2}), \smallskip \\
 z_{n}&=& z_{n-1}  + \rho_n (\widetilde{z}_n - z_{n-1}), \smallskip \\
 x_{n}&=& x_{n-1}  + \rho_n (\widetilde{x}_n - x_{n-1}).
 \ea\right.\een
\item[{\em Step 3.}] Set $n\leftarrow n + 1$ and return to Step 1.\\
  \end{description}
}
\end{algo}
\vskip1mm\hrule\vskip5mm

The convergence of Algorithm \ref{algo-relax} is established in the following theorem.
The key of its proof is to observe, based on Theorem \ref{lem-R} and \cite[Proposition 4.32]{Bauschke2011book}, that
the operator $\mathbb{S}\circ \mathbb{T}$ is $2/3$-averaged\footnote{An operator $P$ is $\alpha$-averaged for some $\alpha \in (0,1)$ if there exists a nonexpansive operator $Q$ such that $P = (1-\alpha)I +\alpha Q$.}, and thus the convergence result follows from
\cite[Proposition 5.15]{Bauschke2011book}.

\begin{thm}\label{thm-rGRPDA}
Let $f$ satisfy Assumption \ref{asmp-f} and $\{(z_n,x_n,y_{n-1})\}_{n\in\bN}$ be the sequence generated by Algorithm \ref{algo-relax} from any initial point $(x_0, y_{-1})\in \R^q\times \R^p$ and $z_0=x_0$.
Then, $\{(x_{n}, y_n)\}_{n\in\bN}$ converges to a solution of (\ref{pd-prob}), i.e., an element in $\cS$.
\end{thm}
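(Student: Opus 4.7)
The plan is to recast Algorithm~\ref{algo-relax} as a relaxed Krasnosel'ski\u{\i}--Mann iteration applied to the operator $\mathbb{S}\circ\mathbb{T}$ introduced in \eqref{def:FR}. Setting $\xi_n=(z_n;\,x_n;\,y_{n-1})$ and $\widetilde\xi_n=(\widetilde z_n;\,\widetilde x_n;\,\widetilde y_{n-1})$, Step~1 of Algorithm~\ref{algo-relax} is exactly $\widetilde\xi_n=\mathbb{S}\circ\mathbb{T}(\xi_{n-1})$, while Step~2 applies a common relaxation $\xi_n=\xi_{n-1}+\rho_n(\widetilde\xi_n-\xi_{n-1})$ to all three blocks simultaneously. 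Convergence will then follow once I show that $\mathbb{S}\circ\mathbb{T}$ is an averaged operator whose (nonempty) fixed-point set corresponds, via the lifting $(x,y)\mapsto(x;\,x;\,y)$, to $\cS$.

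First I would verify that $\mathbb{T}$ and $\mathbb{S}$ are both firmly nonexpansive. Theorem~\ref{lem-R} already guarantees firm nonexpansiveness of the linear part $T$ under the standing parameter choice $\tau\sigma L^2<\psi\in(1,2]$, and since $\mathbb{T}=T+\vartheta_1$ is merely an affine translation of $T$, firm nonexpansiveness is preserved. For $\mathbb{S}=S+\vartheta_2$, the block-diagonal matrix $S$ is composed of two identity blocks and the block $\prox_{\tau g}$; each block is firmly nonexpansive (the proximal mapping of any closed proper convex function is), the block-diagonal structure transfers the property to $S$, and translation by $\vartheta_2$ does not alter it. With both factors $\tfrac{1}{2}$-averaged, \cite[Proposition~4.32]{Bauschke2011book} then yields that $\mathbb{S}\circ\mathbb{T}$ is $\tfrac{2}{3}$-averaged.

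The convergence of $\{\xi_n\}_{n\in\bN}$ to a fixed point of $\mathbb{S}\circ\mathbb{T}$ would then follow from \cite[Proposition~5.15]{Bauschke2011book}, whose requirements $\rho_n\in(0,\tfrac{3}{2})$ and $\sum_n\rho_n(1-\tfrac{2\rho_n}{3})=+\infty$ are exactly those imposed in Step~0 of Algorithm~\ref{algo-relax}. To conclude, I would characterise the fixed points: a fixed point $\xi^\star=(z^\star;\,x^\star;\,y^\star)$ of $\mathbb{S}\circ\mathbb{T}$ satisfies $z^\star=x^\star$ from the first block, $y^\star=\prox_{\sigma f^*}(y^\star+\sigma Kx^\star)$ from the third (using \eqref{def:eta} to unwind the affine expression), and $x^\star=\prox_{\tau g}(x^\star-\tau K^\top y^\star)$ from the second, which are exactly the optimality inclusions defining $\cS$ in \eqref{def:cS}; conversely, any $(\bar x,\bar y)\in\cS$ lifts to the fixed point $(\bar x;\,\bar x;\,\bar y)$, so the fixed-point set is nonempty by Assumption~\ref{asmp-1}. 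Since $\xi_n\to\xi^\star$ gives $x_n\to x^\star$ and $y_{n-1}\to y^\star$, a trivial index shift yields $y_n\to y^\star$ as well. The hard part of the argument has already been absorbed into Theorem~\ref{lem-R}; the remaining work is a clean assembly of averaged-operator facts, with the only mild bookkeeping being to confirm that the componentwise scalar relaxation $\rho_n$ is indeed the Krasnosel'ski\u{\i}--Mann relaxation in the product space.
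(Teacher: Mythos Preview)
Your proposal is correct and follows essentially the same route as the paper: recast the iteration as a relaxed Krasnosel'ski\u{\i}--Mann scheme for $\mathbb{S}\circ\mathbb{T}$, invoke Theorem~\ref{lem-R} and firm nonexpansiveness of the proximal block to get both factors $\tfrac12$-averaged, apply \cite[Proposition~4.32]{Bauschke2011book} for the $\tfrac23$-averaged composition, identify $\mathrm{Fix}(\mathbb{S}\circ\mathbb{T})$ with the lifting of $\cS$, and conclude via \cite[Proposition~5.15]{Bauschke2011book}. The only addition you make over the paper is the explicit index-shift remark for $y_n$, which is harmless.
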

\begin{proof}
Let $\mathbb{S}$ and $\mathbb{T}$ be defined as in \eqref{def:FR}, $\mathbb{G} := \mathbb{S} \circ \mathbb{T}$, and $\xi_n = (z_n; x_n; y_{n-1})$ for $n\geq 0$.
Then, the sequence $\{(z_n,x_n,y_{n-1})\}_{n\in\bN}$ generated by Algorithm \ref{algo-relax} from $(z_0, x_0, y_{-1})$ satisfies $\xi_{n} = \xi_{n-1} + \rho_n (\mathbb{G}(\xi_{n-1})-\xi_{n-1})$ for $n\geq 1$. The key of the proof is to show that $\mathbb{G}$ is $2/3$-averaged, which we argue below.

Apparently, $\eta$ defined in \eqref{def:eta} lies in $(0,1]$. It then follows from Theorem \ref{lem-R} that $\mathbb{T}$ defined in \eqref{def:FR} is firmly nonexpansive under the condition of Algorithm \ref{algo-relax}, i.e., $\tau, \sigma > 0$ and $\tau\sigma L^2 < \psi \in (1,2]$. On the other hand, it is well known that the proximal operator $\prox_{\tau g}$ is firmly nonexpansive for any $\tau > 0$. Then, by the definition in \eqref{def:FR}, $\mathbb{S}$ is also firmly nonexpansive. Consequently, it follows from \cite[Proposition 4.32]{Bauschke2011book} that $\mathbb{G} = \mathbb{S}\circ \mathbb{T}$ is $2/3$-averaged.

Denote by $\mbox{Fix}(\mathbb{G})$ the set of fixed points of  $\mathbb{G}$, i.e.,
$\mbox{Fix}(\mathbb{G}) := \left\{ \xi\in \R^q\times \R^q\times \R^p:  \xi=\mathbb{G}(\xi)\right\}$.
By the construction of $\mathbb{S}$ and $\mathbb{T}$, it is elementary to show that
\[\nonumber
\mbox{Fix}(\mathbb{G}) = \{\xi = (z; \, x; \, y) \mid z = x, \; (x, y)\in\cS\},
\]
where $\cS$ is defined in \eqref{def:cS}.
By Assumption \ref{asmp-1}, $\cS$, and thus  $\mbox{Fix}(\mathbb{G})$, is nonempty.
Consequently, it follows from \cite[Proposition 5.15]{Bauschke2011book} that the sequence $\{\xi_n\}_{n\in\bN}$
is Fej\'{e}r monotone with respect to $\text{Fix}(\mathbb{G})$ and converges to a point in $\text{Fix}(\mathbb{G})$.
This completes the proof.
\end{proof}

%
%

\section{Numerical Experiments}
\label{sec_experiments}

In this section, we present numerical results on LASSO \cite{Tibshirani1996Regression}, nonnegative least-squares and minimax matrix game problems to demonstrate the performance of Algorithm \ref{alg-basic} (GRPDA), the accelerated GRPDA given in Algorithm \ref{alg-acc} (A-GRPDA), and the relaxed GRPDA described in Algorithm \ref{algo-relax} (R-GRPDA).
All the experiments were performed within Python 3.8 running on a 64-bit Windows PC with an Intel(R) Core(TM) i5-4590 CPU@3.30 GHz and 8GB of RAM.
All the results presented in this section are reproducible by specifying the $seed$ of the random number generator in our code, which is available at \url{https://github.com/cxk9369010/Golden-Ratio-PDA}.

In this section, we let $\|\cdot\|=\|\cdot\|_2$ be the $\ell_2$-norm induced by the dot inner product.
For LASSO and nonnegative least-squares problems, the component function $f$ has the form $f(\cdot) = {1\over 2}\|\cdot-b\|^2$. Thus, Assumption \ref{asmp-f} is satisfied and the relaxed GRPDA described in Algorithm \ref{algo-relax} can be applied.
Furthermore, GRPDA with fixed step sizes (Algorithm \ref{alg-basic}) can adopt the larger value $\psi = 2$ as well (equivalent to setting $\psi = 2$ and $\rho_n\equiv 1$ in Algorithm \ref{algo-relax}).
Since $f^*$ (rather than $g$, as required by Algorithm \ref{alg-acc}) is strongly convex, we can switch  $(g,K,x,q)$ with $(f^*,-K^T,y,p)$ and reduce \eqref{pd-prob2} to \eqref{pd-prob}, so that $g$ is strongly convex.
As such, the accelerated GRPDA given in Algorithm \ref{alg-acc} is also applicable.

The algorithmic parameters are specified as follows. For GRPDA, we used fixed step sizes $\tau=\frac{\sqrt{\psi}}{\sqrt{\beta}L}$ and $\sigma =\beta\tau$. For the minimax matrix game problem, we set $\psi = 1.618$, while for LASSO and nonnegative least-squares problems, which satisfy Assumption \ref{asmp-f}, we set $\psi = 2$. For  A-GRPDA, we set $\psi=1.5$ and $\beta_0 = 1$, while for R-GRPDA we set $\tau=\frac{\sqrt{2}}{\sqrt{\beta}L}$, $\sigma = \beta\tau$ and $\rho_n\equiv1.49$.
%
%
The following state-of-the-art algorithms are compared:
\begin{itemize}
\item PDA (as given in \eqref{pda_basic}) with $\tau=\frac{1}{\sqrt{\beta}L}$ and $\sigma=\sqrt{\beta}\frac{1}{L}$ and $\delta=1$.
\item PGM (proximal gradient method, e.g., \cite{Beck2009A}) with fixed step size $\alpha > 0$. For   LASSO and nonnegative least-squares problems, it holds that $f(Kx) = {1\over 2}\|Kx-b\|^2$, and the PGM appears as $x_{n} = \prox_{\alpha g}(x_{n-1} - \alpha  K^\top  (Kx_{n-1} - b))$. We set $\alpha = 1/\|K\|^2$.
\item FISTA (fast iterative shrinkage thresholding algorithm \cite{Beck2009A}) with fixed step size $\alpha > 0$.
For LASSO and nonnegative least-squares problems, FISTA appears as
\begin{eqnarray*}
\left\{
\begin{array}
  {rcl}
   t_n &=& \Big( 1 + \sqrt{1 + 4 t_{n-1}^2}\Big)/2, \\
   y_{n-1} &=& x_{n-1} + {t_{n-1} - 1 \over t_n} (x_{n-1} - x_{n-2}), \\
   x_{n} &=& \prox_{\alpha g}(y_{n-1} - \alpha  K^\top (Ky_{n-1} - b)),
\end{array}\right.
\end{eqnarray*}
  where $t_0 = 1$ and $x_{-1} = x_0$. The same as PGM, we set $\alpha = 1/\|K\|^2$.

\item GRAAL (golden ratio algorithm \cite{Malitsky2019Golden}, or as given in \eqref{gr-alg2}) with $\tau=\frac{\phi}{2L}$, where $\phi=1.618$.
\end{itemize}
The choice of $\beta$ for GRPDA, R-GRPDA and PDA will be specified later. For all algorithms, we used the same initial points.
For the three tested problems, the evaluation of the proximal point mappings is relatively cheap compared with matrix-vector multiplications.
As such, all the compared algorithms have the same dominant per iteration cost, i.e., two matrix-vector multiplications, we only compare the their performance in terms of iteration numbers.

\begin{prob}[LASSO]\label{pro_1}
The LASSO problem has the form $\min_x F(x):=\frac 1 2 \|Kx-b\|^2 + \mu  \|x\|_1$,
where $K\in \R^{p\times q}$ and $b\in \R^p$ are given, and $x\in \R^q$ is an unknown signal.
In the context of compressive sensing, $p$ is much smaller than $q$.
\end{prob}

Apparently, the above LASSO problem corresponds to \eqref{primal} with $f(u) = \frac 1 2 \|u-b\|^2$ and $g(x) = \mu \|x\|_1$.
Thus, Assumption \ref{asmp-f} is satisfied and $\psi=2$ can be adopted in GRPDA.
The $seed$ of the random number generator was set to $1$. We generated $x^*\in \R^q$ randomly. Specifically, $s$ nonzero components of $x^*$ were determined uniformly at random, and their values were drawn from the uniform distribution in $[-10, 10]$.
The matrix $K\in \bR^{p\times q}$ is constructed as in \cite{Malitsky2018A} by one of the following ways:
\bi
\item[(i)] All entries of $K$ were generated independently from $\cN(0, 1)$, the normal distribution with mean $0$ and standard deviation $1$. 
\item[(ii)] First, we generated a matrix $A\in\R^{p\times q}$, whose entries are independently drawn from $\cN(0,1)$. Then, for a scalar $v\in(0, 1)$ we constructed the matrix $K$ column by column as follows: $K_1 =  A_1/\sqrt{1-v^2}$ and $K_j = vK_{j-1} + A_j$, $j=2,\ldots,q$.
    Here $K_j$ and $A_j$ represent the $j$th column of $K$ and $A$, respectively.
    As $v$ increases, $K$ becomes more ill-conditioned. In this experiment, we tested $v = 0.5$ and $0.9$.
\ei
The additive noise $\nu\in \bR^p$ was generated from $\cN(0, 0.1)$.
Finally, we set $b = Kx^* +\nu$.

The step size ratio $\beta = \sigma/\tau$ was set to be $400$ as in \cite{Malitsky2018A}, which was determined based on numerical experience. All the algorithms were initialized at $x_0 = 0$ and $y_0=-b$.
For the LASSO problem, we first compared the performance of GRPDA with varying $\psi$. The results are given in Figure \ref{Fig psi}, where
the decreasing behavior of function value errors $F(x_n)-F^*$ as the algorithm proceeded is presented. Here $F^* = \inf_x F(x)$ was approximately computed by running our algorithms for sufficiently many iterations.
It can be seen from Figure \ref{Fig psi} that GRPDA with larger $\psi$ converges faster for all the tested three cases.
Thus, in our comparison with other algorithms, we set $\psi=2$. Besides, instead of terminating the algorithms with some stopping criteria, we ran all algorithms for a fixed number of iterations and examined their convergence behavior, see  Figure \ref{Fig 1}.
The parameters $p$, $q$, $s$ and $v$ are given in the captions of the figures.

\begin{figure}[htp]
\centering
\subfigure[Case (i).]{
\includegraphics[width=0.32\textwidth]{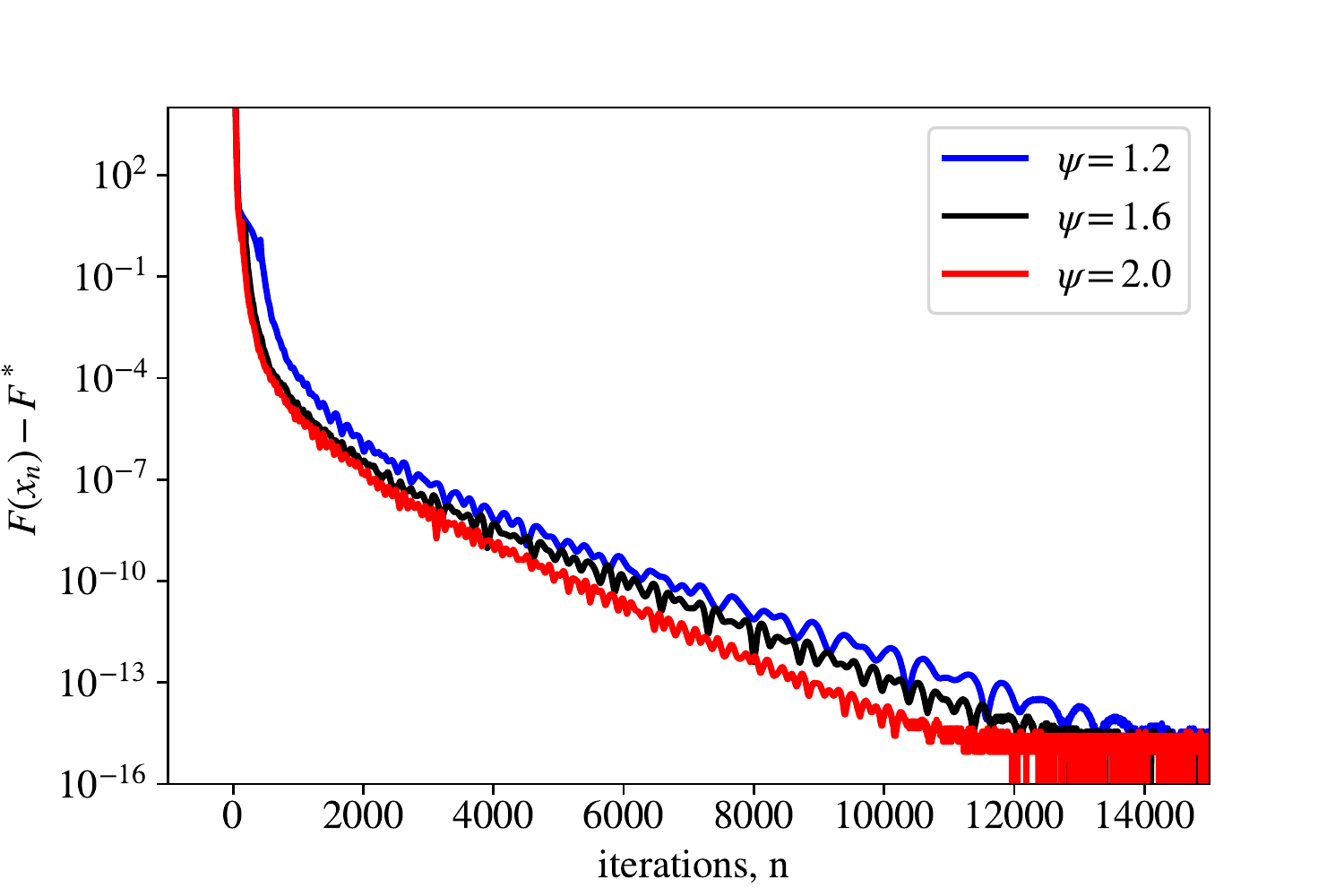}}
\subfigure[Case (ii) with $v=0.5$.]{
\includegraphics[width=0.32\textwidth]{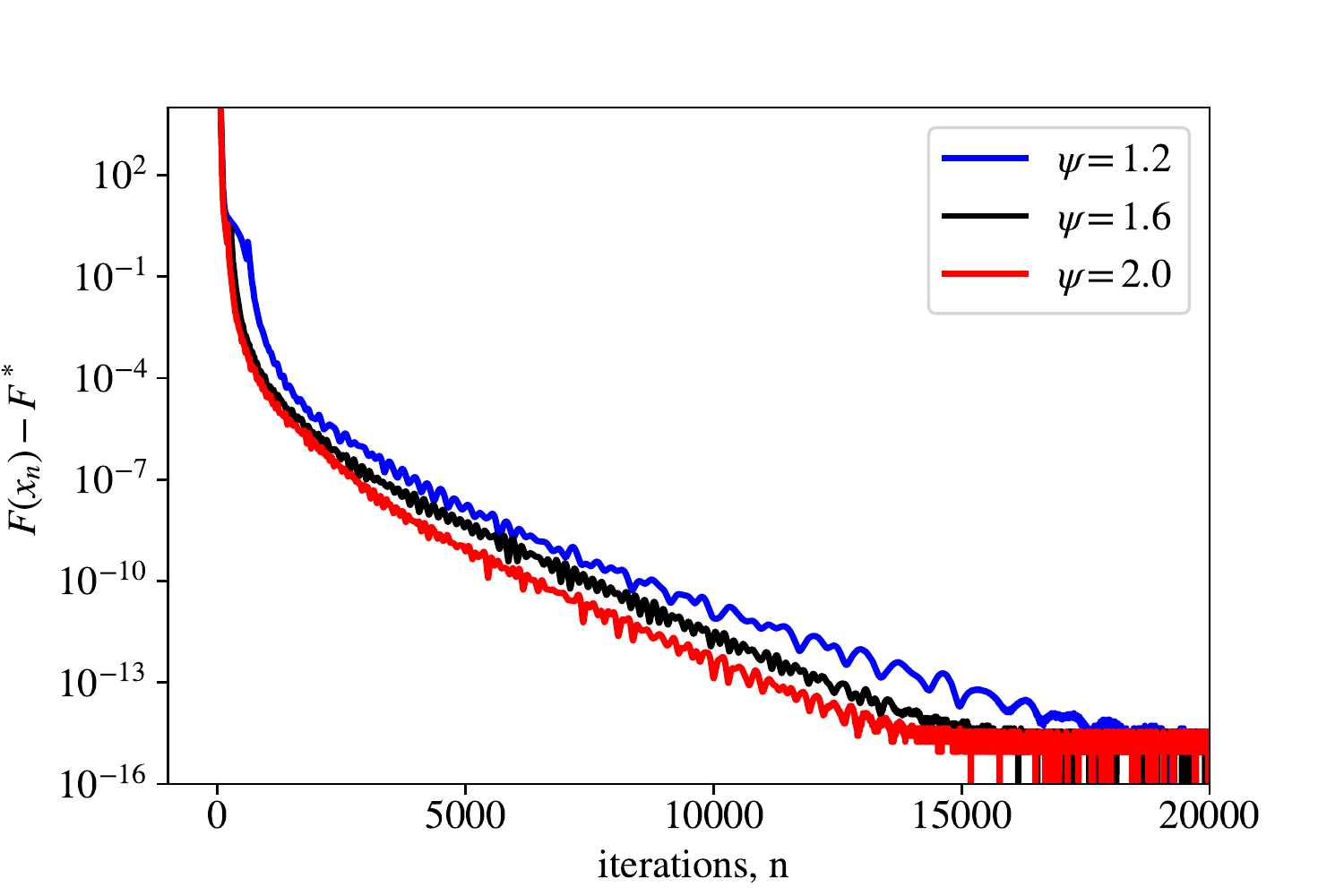}}
\subfigure[Case (ii) with $v=0.9$.]{
\includegraphics[width=0.32\textwidth]{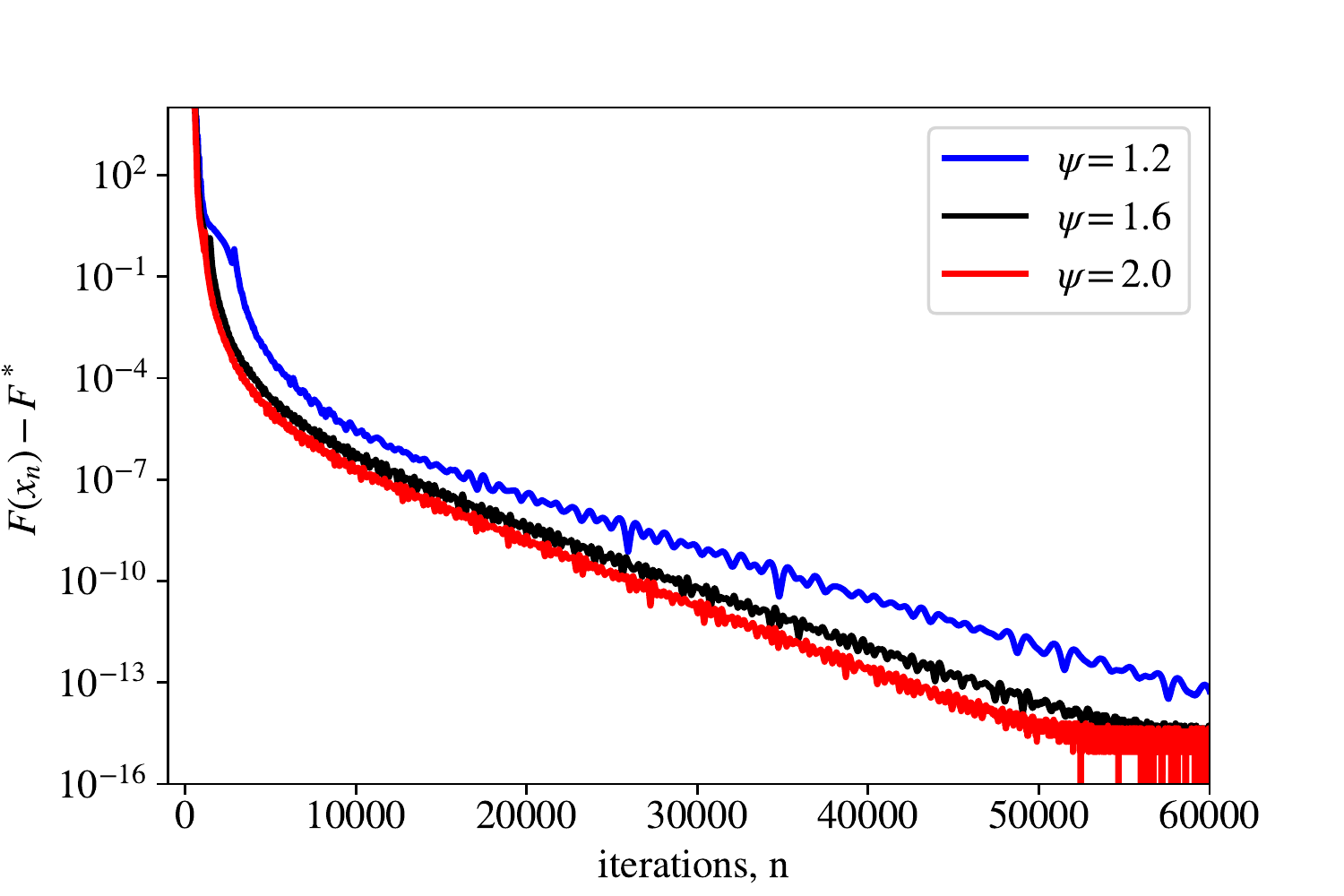}}
\caption{Evolution of $F(x_n)-F^*$ for the LASSO problem with different $\psi$ and $(p,q,s) = (200,1000,10)$.}
\label{Fig psi} 
\end{figure}

It can be seen from Figure \ref{Fig 1} that primal-dual type methods outperform the PGM, FISTA and GRAAL. For all the three tested cases, GRPDA performs better than PDA, R-GRPDA is even better and A-GRPDA is the best. In particular, the proposed Gauss-Seidel type golden ratio algorithms, including GRPDA, A-GRPDA and R-GRPDA, perform much better than the Jacobian type golden ratio algorithm GRAAL as given in \eqref{gr-alg2}.

\begin{figure}[htp]
\centering
\subfigure[Case (i) with $(p,q,s) = (200,1000,10)$.]{
\includegraphics[width=0.45\textwidth]{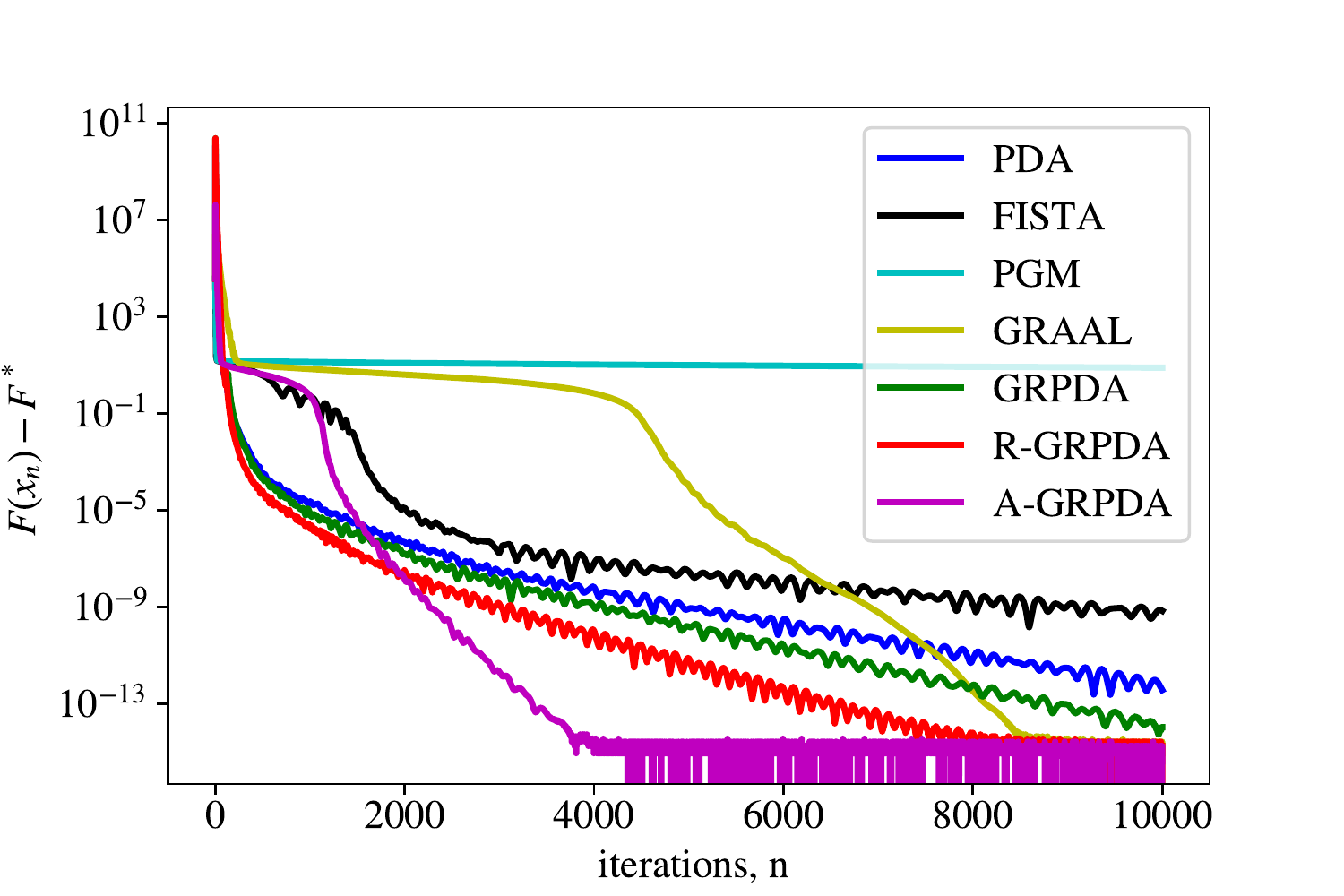}}
\subfigure[Case (i) with $(p,q,s) = (1000,2000,100)$.]{
\includegraphics[width=0.45\textwidth]{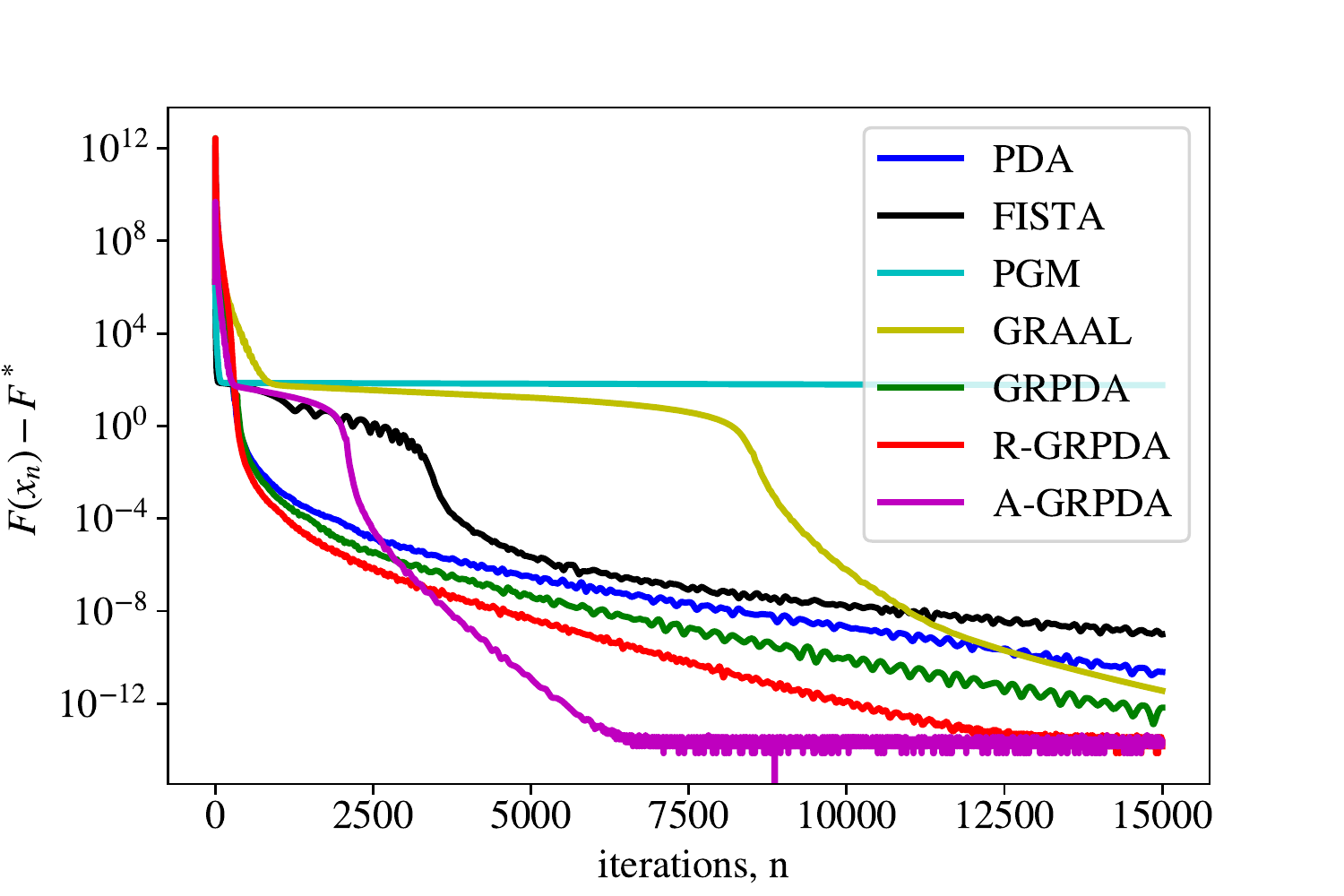}}
\subfigure[Case (ii) with $(p,q,s,v) = (1000,5000,100,0.5)$.]{
\includegraphics[width=0.45\textwidth]{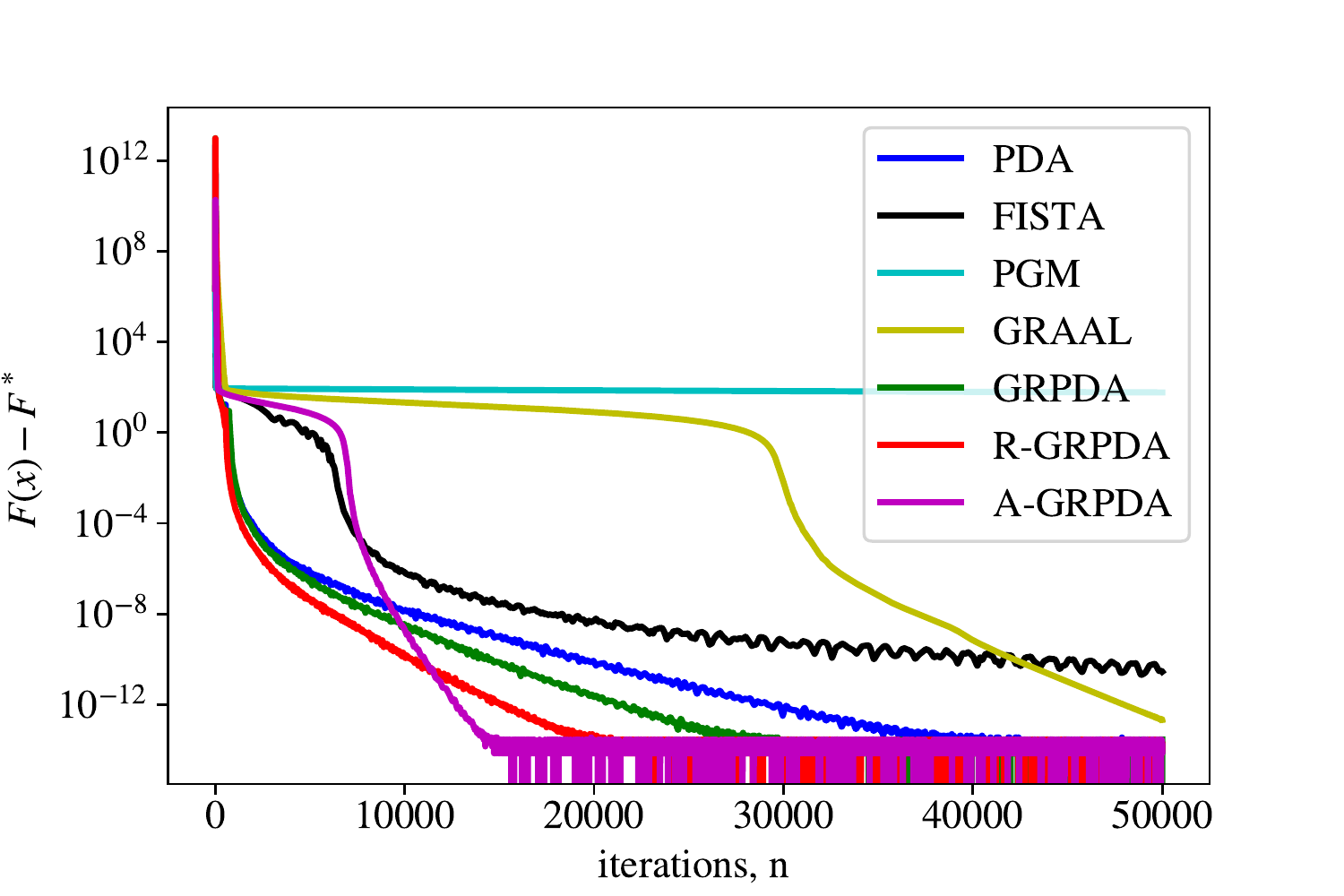}}
\subfigure[Case (ii) with $(p,q,s,v) = (1000,5000,100,0.9)$.]{
\includegraphics[width=0.45\textwidth]{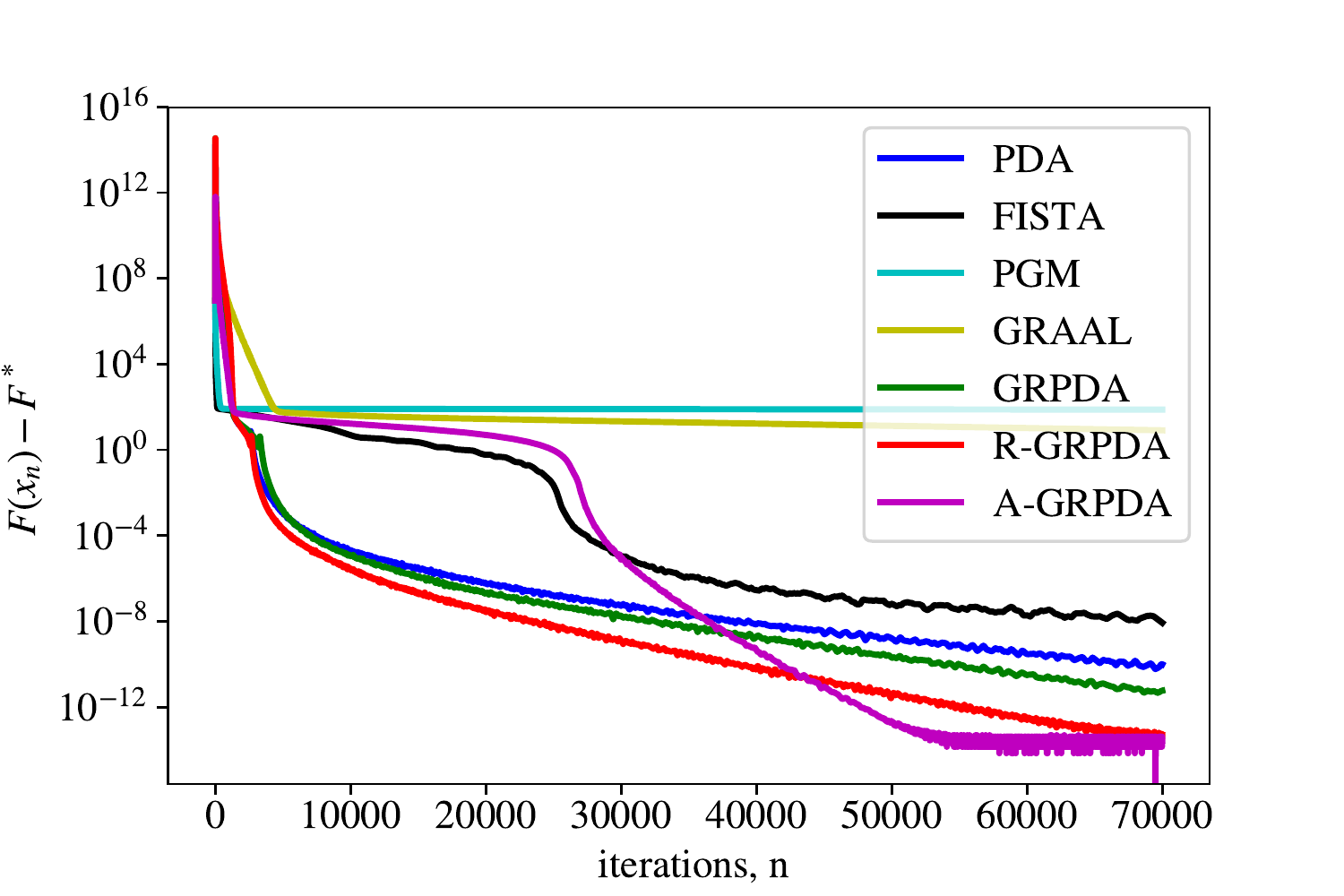}}
\caption{Numerical results for the LASSO problem.} 
\label{Fig 1} 
\end{figure}


\begin{prob}[Nonnegative least-squares problem]\label{pro_2}
The nonnegative least-squares problem aims to find $x\in\R^q_+$ (the nonnegative orthant) such that $\|Kx-b\|$ is minimized for given $K\in\R^{p\times q}$ and $b\in\R^p$, i.e., $\min_{x\in\R^q} F(x):= \frac 1 2 \|Kx-b\|^2 + \iota_{\R^q_+}(x)$.
\end{prob}

Similar to the LASSO problem, the nonnegative least-squares problem can also be solved by A-GRPDA and R-GRPDA since the data fitting term $\frac 1 2 \|Kx-b\|^2$ remains the same. The only difference is that the $\ell_1$-norm $\mu\|\cdot\|_1$ in LASSO is now replaced by the indicator function $\iota_{\R^q_+}(\cdot)$. Since the proximal mapping of the indicator function $\iota_{\R^q_+}(\cdot)$ is just the projection on $\R^q_+$, all the compared algorithms keep feasibility of the nonnegativity constraint.
As such,  $F(x_n) = \frac 1 2 \|Kx_n-b\|^2$ since $x_n\geq 0$ is always satisfied. 
For this experiment, we consider two types of data described below.
\bi
\item[(i)] Real data from the Matrix Market library\footnote{https://math.nist.gov/MatrixMarket/data/Harwell-Boeing/lsq/lsq.html.}. Two instances were tested, i.e.,  ``illc1033" and ``illc1850", where $K\in\R^{p\times q}$ is sparse and has sizes $(p,q) = (1033,320)$ and $(1850, 712)$, respectively.
    These matrices were used in the testing of the famous LSQR algorithm \cite{PaS82acm} and are much more ill-conditioned than the other two instances ``well1033" and ``well1850" available at the library.
    The entries of $b\in \bR^p$ were drawn independently from $\cN (0, 1)$.

\item[(ii)] Random matrix. $K\in\R^{p\times q}$ has approximately $dpq$ nonzeros, with $d \in(0,1)$ as in \cite{Malitsky2018A},
and $x^*$ is a sparse vector, whose $s$ nonzero entries were drawn uniformly from $[0,100]$. We set $b = Kx^*$, which gives $F^* = 0$.
The following two cases were tested.
\begin{enumerate}
\item[(a)] $p = 1000$, $q = 2000$, $d = 0.5$ and $s = 100$. The nonzero entries of $K$ were generated independently from the uniform distribution in $[0,1]$.
\item[(b)] $p = 10000$, $q = 20000$, $d = 0.01$ and $s = 500$. The nonzero entries of $K$ were generated independently from the normal distribution $\mathcal{N}(0,1)$.
\end{enumerate}

\ei


For GRPDA and R-GRPDA, we set $\beta=1$, except for the random data with $p=1000$, for which we set $\beta=25$ as in \cite{Malitsky2018A}. The same as the LASSO case, all the algorithms were initialized at $x_0 = 0$ and $y_0 = -b$.

\begin{figure}[!htp]
\centering
\subfigure[illc1033.]{
\includegraphics[width=0.45\textwidth]{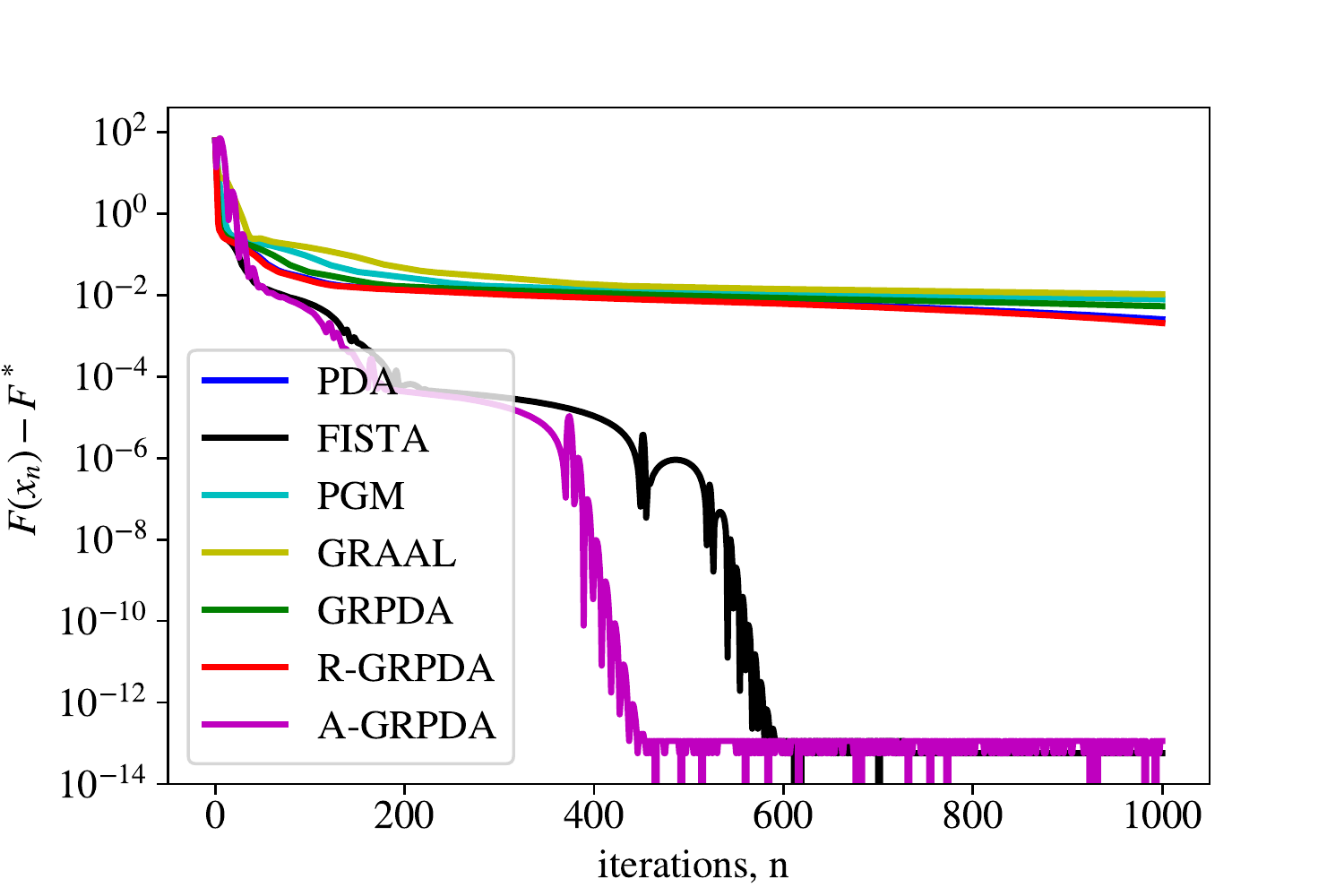}}
\subfigure[illc1850.]{
\includegraphics[width=0.45\textwidth]{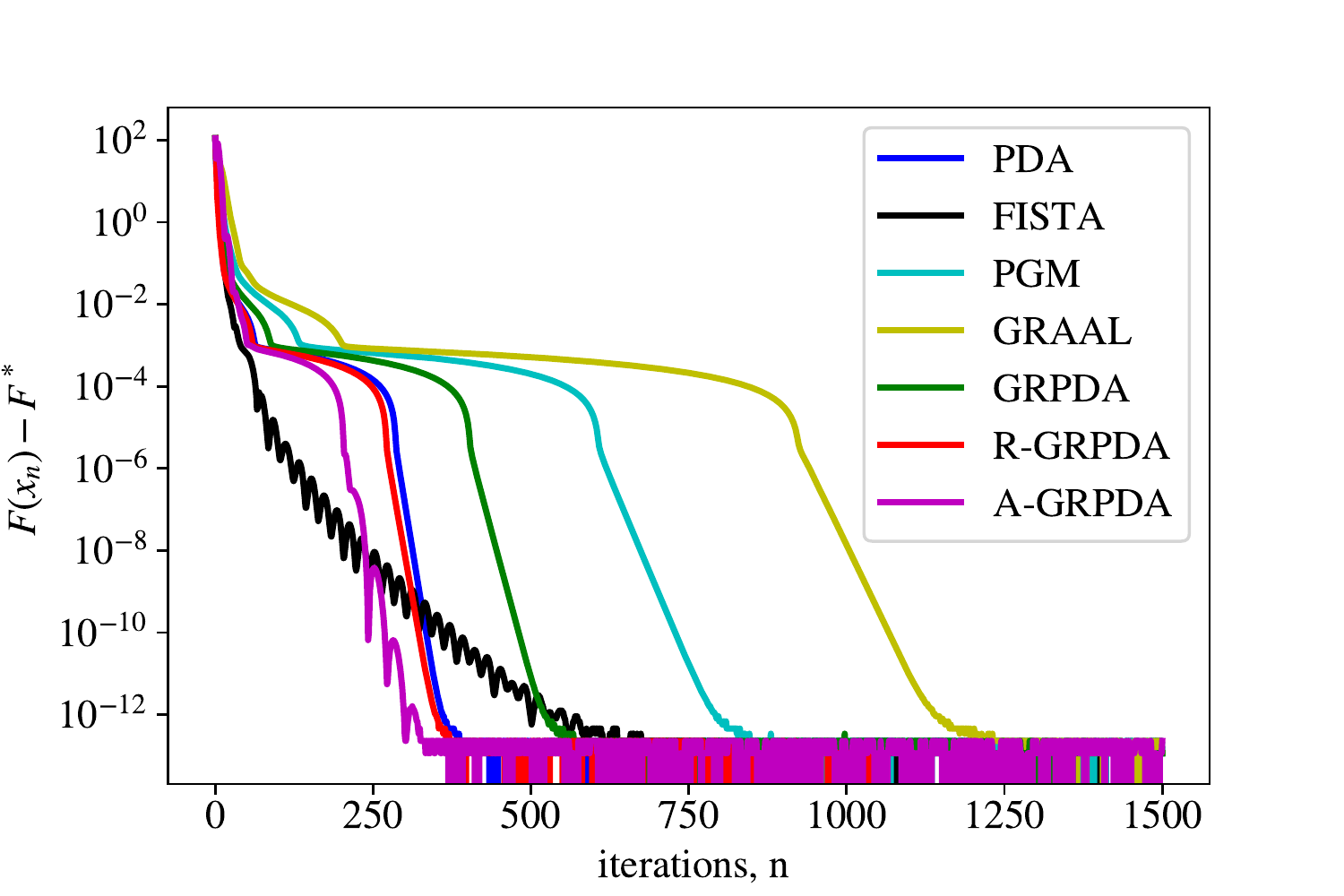}}
\caption{
Numerical results for nonnegative least-squares problem: Real data.}
\label{Fig ILL} 
\end{figure}

\begin{figure}[!htp]
\centering
\subfigure[$p = 1000$]{
\includegraphics[width=0.45\textwidth]{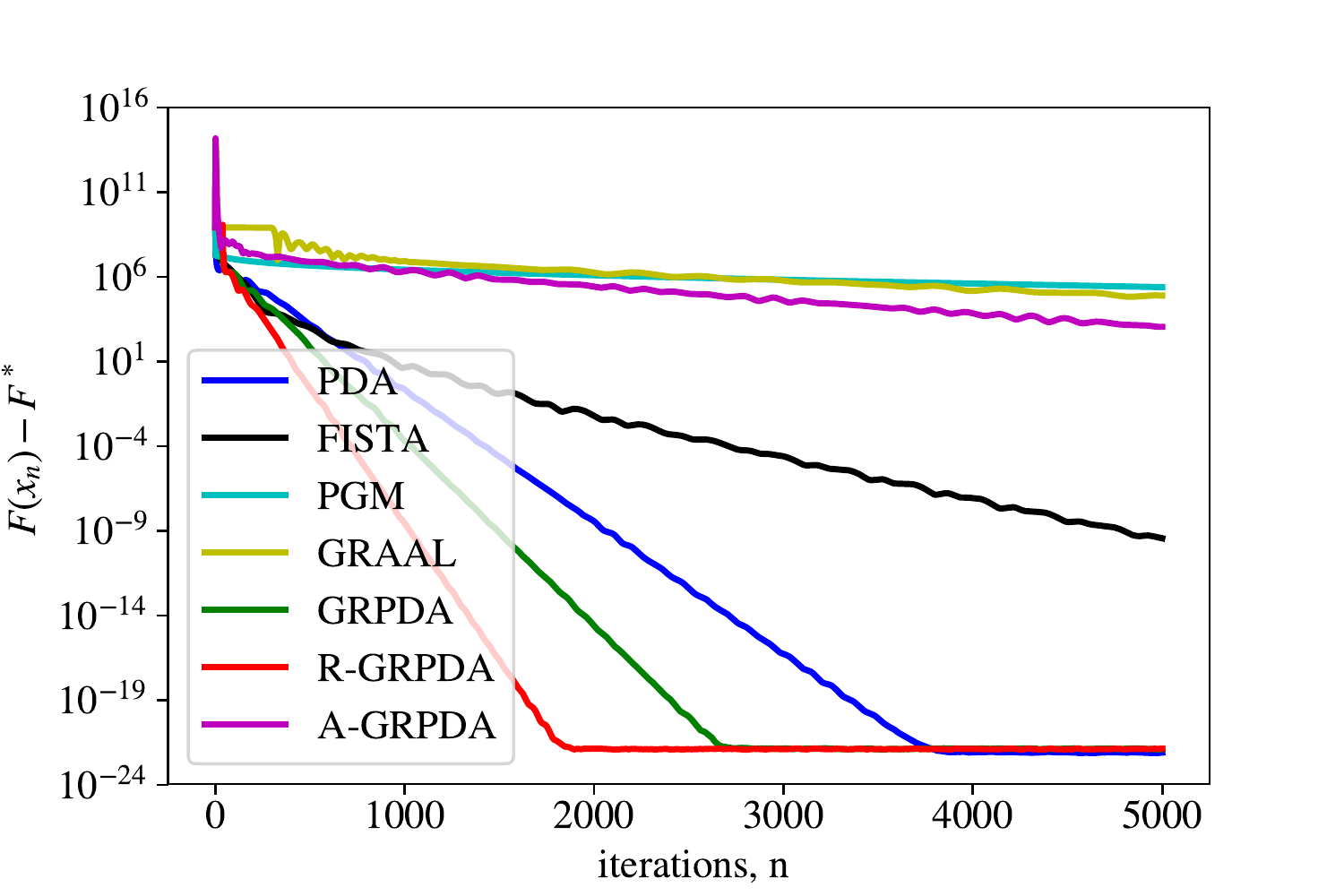}}
\subfigure[$p = 10000$]{
\includegraphics[width=0.45\textwidth]{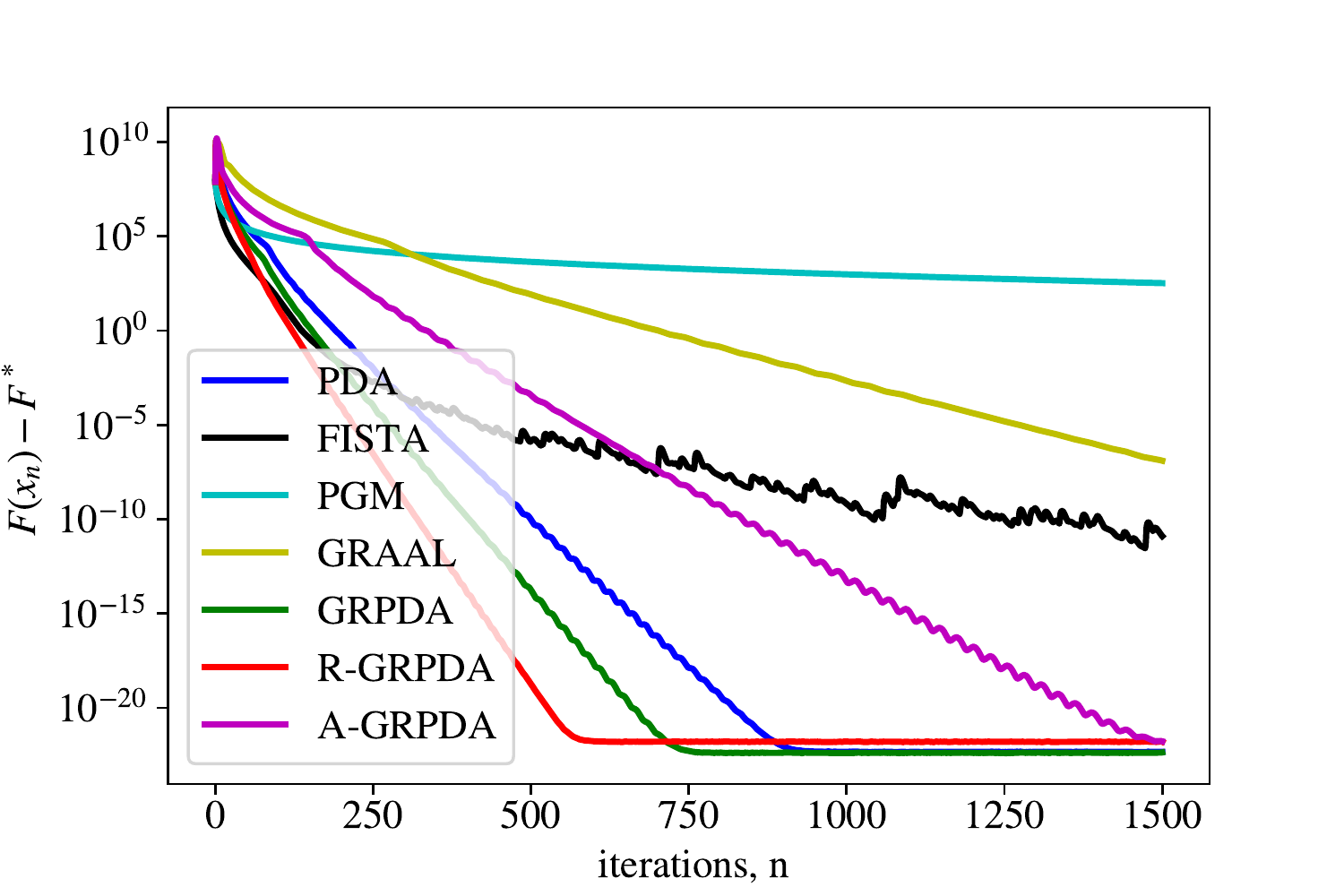}}
\caption{
Numerical results for nonnegative least-squares problem: Random data.}
\label{Fig NLS} 
\end{figure}



Similar to the LASSO case, the decreasing behavior of $F(x_n)-F^*$ as the algorithms proceeded is presented in Figure \ref{Fig ILL} for real data and Figure \ref{Fig NLS} for random data. It can be observed from Figure \ref{Fig ILL} that A-GRPDA performs the best, followed by FISTA. In comparison, for random data R-GRPDA performs the best, followed by GRPDA and PDA, as shown in Figure \ref{Fig NLS}. Other algorithms seem to be less efficient. Nonetheless, all the compared algorithms perform favorably and have attained fairly high precision in a reasonable number of iterations.

\begin{prob}[Minimax matrix game]\label{pro_3}
Let $\D_q = \{x\in\R^q: x\geq 0, \; \sum_{i=1}^q x_i = 1\}$ be the standard unit simplex in $\R^q$, and  $K\in \R^{p\times q}$.
The minimax matrix game problem is given by
\be\label{mm_pro}
\min_{x \in \R^q}\max_{y\in \R^p} \iota_{\D_q}(x) + \lr{Kx, y} - \iota_{\D_p}(y).
\ee
\end{prob}

Apparently, \eqref{mm_pro} is a special case of \eqref{pd-prob} with $g = \iota_{\D_q}$ and $f^* = \iota_{\D_p}$.
We note that the projection onto $\D_q$ can be computed efficiently, see, e.g., \cite[Corollary 6.29]{Beck2017book}.
Therefore, any algorithms depending on the proximal operators of  $\iota_{\D_q}$ can be implemented efficiently.
In our implementation, we used the algorithm from \cite{Duchi2011Diagonal} to compute the projection onto the standard unit simplex.
To compare different algorithms, we used the primal-dual gap  function as defined in \eqref{G}, which can be easily computed for a feasible pair $(x, y) \in\D_q \times \D_p$ by $G(x, y) := \max_i(Kx)_i - \min_j(K^\top y)_j$. Here subscript $i$ (or $j$) denotes the $i$th (or $j$th) component of the underlying vector.

For this minimax matrix game problem, the only relevant algorithms discussed at the beginning of this section are
PDA, GRPDA and GRAAL. The rest are not applicable. In this experiment, we set $\beta = 1$ for PDA and GRPDA.
The initial point for all algorithms was set to be $x_0 =\frac{1}{q}(1,\ldots,1)^\top$ and $y_0 =\frac{1}{p}(1,\ldots,1)^\top$.
We tested two types of random matrix $K\in \bR^{p\times q}$ with $seed=50$,  i.e.,
(i) $(p,q) = (100,100)$ and all entries of $K$ were generated independently from the uniform distribution in $[-1, 1]$, and (ii)
$(p,q) = (100,500)$ and all entries of $K$ were generated independently from the normal distribution $\cN (0,1)$.

\begin{figure}[htp]
\centering
\subfigure[Case (i).]{
\includegraphics[width=0.45\textwidth]{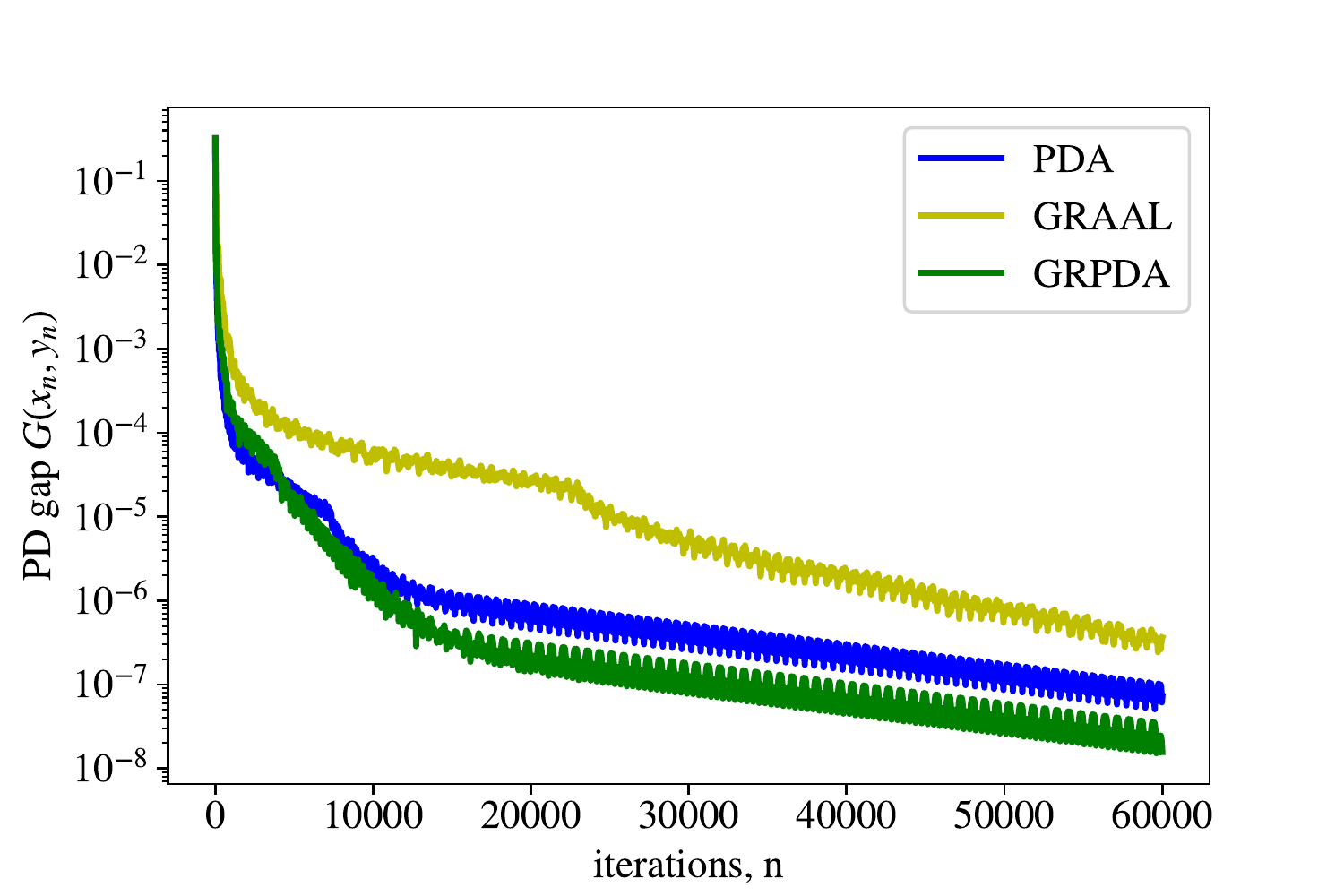}}
\subfigure[Case (ii).]{
\includegraphics[width=0.45\textwidth]{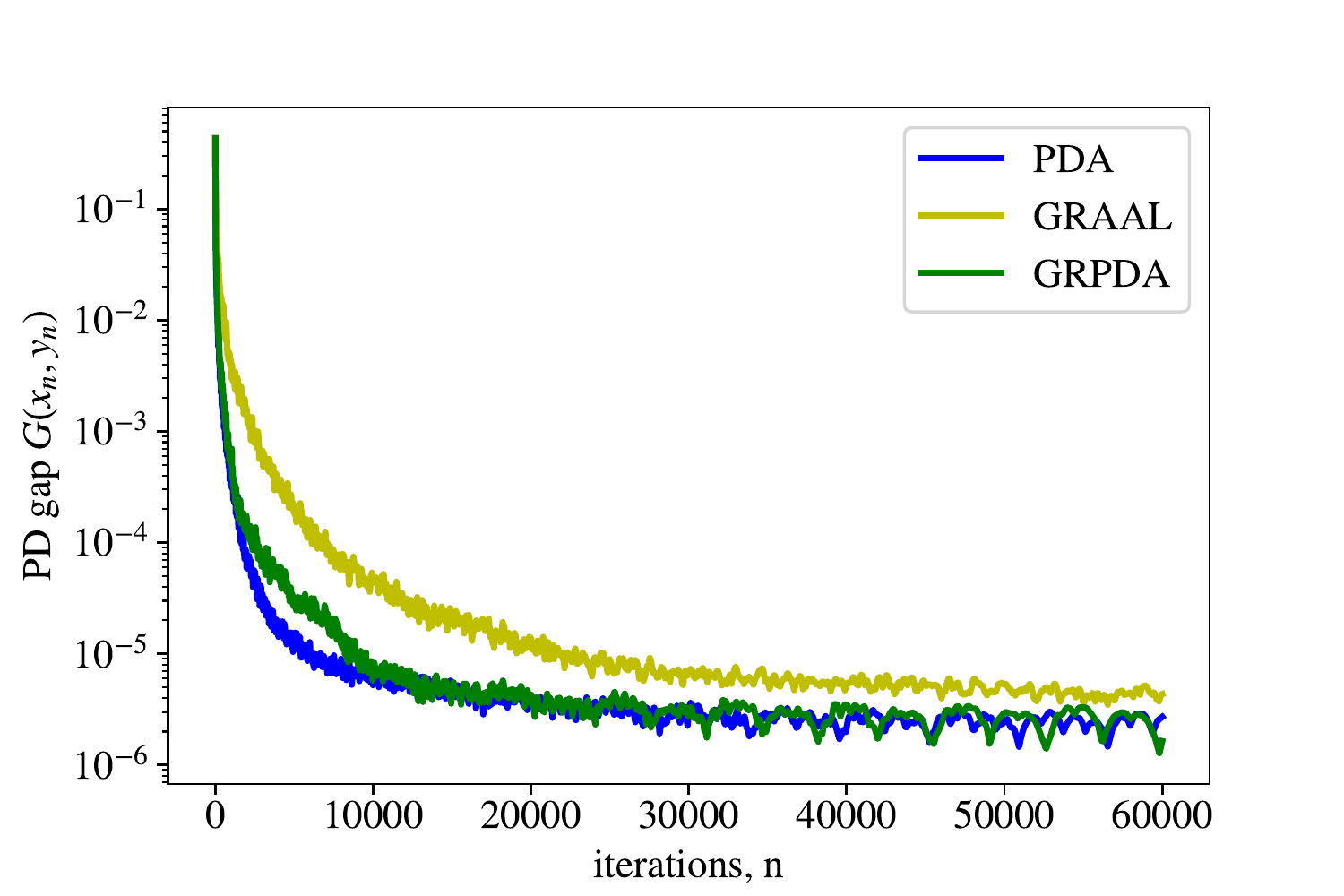}}
\caption{
Numerical results for minimax matrix game problem.
}
\label{Fig 5} 
\end{figure}

The decreasing behavior of the primal-dual gap function as the algorithms proceeded is given in Figure \ref{Fig 5}, from which it can be seen that in both tests GRPDA performs comparably with PDA and both GRPDA and PDA outperform GRAAL. Recall that in this experiment $\beta$ was set to be $1$ and thus the primal step size $\tau$ and the dual step size $\eta$ are equal.
We attribute the faster convergence of GRPDA and PDA, as compared to GRAAL, to the Guass-Seidel nature of GRPDA \eqref{GRPDA} and PDA \eqref{pda_basic}, as compared to the Jacobian type iteration GRAAL \eqref{gr-alg2}.


\section{Conclusions}
\label{sec_conclusion}
In this paper, motivated by the recent work \cite{Malitsky2019Golden}, we proposed, analyzed and tested a golden ratio primal-dual algorithm (GRPDA), which can be viewed as a new remedy to the classical Arrow-Hurwicz method. Different from the PDA in \cite{Chambolle2011A}, which uses an extrapolation or inertial step, the proposed GRPDA uses a convex combination technique originally introduced in \cite{Malitsky2019Golden}.
Global iterate convergence and $\cO(1/N)$ ergodic rate of convergence, measured by the primal dual gap function, are established.
When either $g$ or $f^*$ is strongly convex, we managed to modify the algorithm so that it enjoys a faster $\cO(1/N^2 )$ ergodic convergence rate.
For two widely used special cases, i.e., regularized least-squares problem and linear equality constrained problem,
we further show  via spectral analysis that the algorithmic parameter $\psi$ can be enlarged from $(1,\phi]$ to $(1,2]$, which allows larger primal and dual step sizes.
Moreover, in the fixed point perspective the iterative mapping is $2/3$-averaged and thus a relaxation step can be taken with parameter $\rho\in (0,2/3)$.
Our preliminary numerical results on LASSO, nonnegative least-squares and minimax matrix game problems show that the proposed algorithms perform favorably. In particular, GRPDA with fixed step sizes is comparable with PDA in general, and the relaxed and accelerated variants can achieve superior performance.

Some interesting issues remain to be investigated. For example, how to modify the proposed algorithms so that they are suitable to the cases when the operator norm $\|K\|$ is hard to evaluate, how to adapt the proposed algorithms to more general settings, including the general linearly constrained separable convex optimization problem $\min_{x,y} \{f(x) + g(y): Ax + By = b\}$, or when the operator $K$ is nonlinear. Moreover, for large scale finite sum problem, which are abundant in machine learning, it is interesting to design and analyze their stochastic and/or incremental counterparts. We leave these issues for further investigations.



%

\bibliographystyle{abbrv}
\bibliography{cxktex}

\end{document}